\documentclass[12pt, a4paper, oneside]{article}
\usepackage{amsthm, amssymb, bm, color, framed, graphicx, hyperref, mathrsfs,bbm}
\usepackage[left=2.0cm,right=2.0cm,top=2.5cm,bottom=3.5cm]{geometry}
\usepackage{amsmath}
\usepackage{animate}
\usepackage{graphicx,comment,float,CJKutf8}
\numberwithin{equation}{section}

\title{\textbf{Complexity of flows with interactions and stochastic analysis}}
\author{Yifan Shi \\
HSE University\\
yshi@edu.hse.ru
}
\date{}
\definecolor{shadecolor}{RGB}{255, 255, 255}

\usepackage[TS1,OT1,T1]{fontenc}
\newtheorem{theorem}{Theorem}[section]
\newtheorem{lemma}[theorem]{Lemma}
\newtheorem{corollary}[theorem]{Corollary}
\newtheorem{proposition}[theorem]{Proposition}

\theoremstyle{definition}
\newtheorem{definition}[theorem]{Definition}

\newtheorem{assumption}[theorem]{Assumption}

\newtheorem{remark}[theorem]{Remark}

\begin{document}
\maketitle
\tableofcontents
\newpage
\begin{abstract}

We study stochastic flows with interaction through an infinite-dimensional
semigroup approach. By means of a kernel lifting procedure, we represent
the solution to measure-valued stochastic equation as a Hilbert-valued Markov process.
We identify its generator, construct the associated Markov semigroup, and
establish a dense range condition using measure polynomial functionals.
The results provide a framework for the analysis of stability of stochastic equations
with interaction.

\end{abstract}

\section{Introduction}

Equations with interaction provide a framework for describing systems of particles whose motion depends not only on the position of an individual particle but also on the spatial distribution of the whole system. A basic form of this construction can already be seen for a finite system of interacting particles. Let $x_1(t),\ldots,x_N(t)$ denote the positions of $N$ particles and consider an interaction term of the form
\begin{equation}
\frac{1}{N}\sum_{j=1}^N h(x_i(t),x_j(t)).
\end{equation}
Introducing the empirical measure
\begin{equation}
\mu_t^N=\frac{1}{N}\sum_{j=1}^N\delta_{x_j(t)},
\end{equation}
the interaction term can be written as
\begin{equation}
\frac{1}{N}\sum_{j=1}^N h(x_i(t),x_j(t))
=
\int_{\mathbb R^d}h(x_i(t),v)\,\mu_t^N(\mathrm{d}v).
\end{equation}
Thus, the interaction between finitely many particles can be expressed in terms of the empirical distribution of the whole system. 

This point of view is closely related to the theory of stochastic flows with interaction and measure-valued processes developed by A. A. Dorogovtsev. In particular, A. A. Dorogovtsev~\cite{Dorogovtsev_2003} considered measure-valued processes represented in the form
\begin{equation}
\mu_t=\mu_0\circ x(\cdot,t)^{-1},
\end{equation}
where the random map $x(\cdot,t)$ describes the motion of the particles carrying the mass.

A general stochastic differential equation with interaction considered in this paper is
\begin{definition}
The equation with interaction is
\begin{equation}
\begin{cases}
\mathrm{d}x(u,t)
=
a(x(u,t),\mu_t,t)\mathrm{d}t
+
\displaystyle\int_{\mathbb R}
b(x(u,t),\mu_t,t,q)W(\mathrm{d}t,\mathrm{d}q),
\\
x(u,0)=u,\qquad u\in\mathbb R^d,
\\
\mu_t=\mu_0\circ x(\cdot,t)^{-1},\qquad t\geq0.
\end{cases}
\label{eq:interacting}
\end{equation}
Here $\mu_t$ is the image of the initial measure $\mu_0$ under the random flow $x(\cdot,t)$, and $W$ is a Wiener sheet.
\end{definition}

%The essential difference from an ordinary stochastic differential equation is that the coefficients depend on the evolving measure $\mu_t$. This self-consistent structure is also the reason why equations with interaction should be distinguished from classical McKean--Vlasov equations. In the latter setting, the measure is usually interpreted as the law of a random variable, whereas in the present framework $\mu_t$ is the image of the initial mass distribution under the same stochastic flow that determines the trajectories of the particles. 

The existence and uniqueness theory for such equations was developed in a number of works, beginning with the constructions of P. Kotelenez and A. A. Dorogovtsev and subsequent developments. See, in particular, A. A. Dorogovtsev~\cite{Dorogovtsev_2003} and the monograph~\cite{Dorogovtsev_2024}. 

To formulate the existence theory for the solution of equation~(\ref{eq:interacting}), one naturally works on spaces of probability measures equipped with Wasserstein metrics. Let $\mathfrak M$ denote the space of probability measures on the Borel $\sigma$-field $\mathcal B(\mathbb R^d)$. For $\mu,\nu\in\mathfrak M$, let $C(\mu,\nu)$ be the set of probability measures on $\mathbb R^d\times\mathbb R^d$ having $\mu$ and $\nu$ as its marginal distributions.

\begin{definition}\cite{Dudley2002}
For $n\geq1$, define
\begin{equation}
\mathfrak M_n
=
\left\{
\mu\in\mathfrak M:
\int_{\mathbb R^d}\rho(u,v)^n\,\mu(\mathrm{d}v)<+\infty
\text{ for all }u\in\mathbb R^d
\right\}.
\end{equation}
The Wasserstein distance of order $n$ between $\mu$ and $\nu$ is
\begin{equation}
\gamma_n(\mu,\nu)
=
\left(
\inf_{\varkappa\in C(\mu,\nu)}
\iint_{\mathbb R^d\times\mathbb R^d}
\rho(u,v)^n
\,\varkappa(\mathrm{d}u,\mathrm{d}v)
\right)^{1/n}.
\end{equation}
\end{definition}

In the setting considered here, one may use the following formulation from~\cite{Dorogovtsev_2024}.

\begin{theorem}\label{existence}
The solution of equation~\eqref{eq:interacting} exists and is unique if there exists $C>0$ such that, for all $u_1,u_2\in\mathbb R^d$, $\nu_1,\nu_2\in\mathfrak M_m$ and $t\geq0$,
\begin{equation}
\begin{split}
&\|a(u_1,\nu_1,t)-a(u_2,\nu_2,t)\|+
\left(
\int_{\mathbb R^d}
\|b(u_1,\nu_1,t,q)-b(u_2,\nu_2,t,q)\|^2
\,\mathrm{d}q
\right)^{1/2}\\
\leq
&C\left(
\|u_1-u_2\|
+
\gamma_m(\nu_1,\nu_2)
\right),
\end{split}
\end{equation}
and $a$ and $b$ are continuous with respect to all their variables.
\end{theorem}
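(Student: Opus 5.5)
The strategy is a Picard-type fixed point argument carried out on the space of random flows, with the measure $\mu_t$ treated as a functional of the flow itself. I fix a horizon $T>0$ and work with the Banach-type space $\mathcal X_T$ of continuous adapted random fields $y(u,t)$, $u\in\mathbb R^d$, $t\in[0,T]$, for which
\[
\|y\|_{T}^m:=\sup_{t\le T}\mathbb E\int_{\mathbb R^d}\|y(u,t)-u\|^m\,\mu_0(\mathrm du)<\infty .
\]
Here $\mu_0\in\mathfrak M_m$, so that $\mu_0\circ y(\cdot,t)^{-1}\in\mathfrak M_m$. Given $y\in\mathcal X_T$, put $\nu^y_t=\mu_0\circ y(\cdot,t)^{-1}$ and define
\[
\Phi(y)(u,t)=u+\int_0^t a(y(u,s),\nu^y_s,s)\,\mathrm ds+\int_0^t\int b(y(u,s),\nu^y_s,s,q)\,W(\mathrm ds,\mathrm dq).
\]
A solution of \eqref{eq:interacting} is exactly a fixed point of $\Phi$.

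The key observation is the coupling bound
\[
\gamma_m\bigl(\nu^{y_1}_t,\nu^{y_2}_t\bigr)^m\le\int\|y_1(u,t)-y_2(u,t)\|^m\,\mu_0(\mathrm du).
\]
It holds because the image of $\mu_0$ under $u\mapsto(y_1(u,t),y_2(u,t))$ belongs to $C(\nu^{y_1}_t,\nu^{y_2}_t)$. With this, the Lipschitz hypothesis of Theorem~\ref{existence} turns into a Lipschitz estimate purely in terms of the $\mu_0$-averaged distance between the flows.

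The contraction estimate goes as follows. Using the Burkholder--Davis--Gundy inequality for the stochastic integral against the Wiener sheet (whose quadratic variation is $\int_0^t\int\|b\|^2\,\mathrm dq\,\mathrm ds$), Hölder's inequality for the drift, and the Lipschitz bound, I obtain
\[
\mathbb E\int\|\Phi(y_1)-\Phi(y_2)\|^m(u,t)\,\mu_0(\mathrm du)\le C_{m,T}\int_0^t\Bigl(\mathbb E\int\|y_1-y_2\|^m(u,s)\,\mu_0(\mathrm du)+\mathbb E\,\gamma_m(\nu^{y_1}_s,\nu^{y_2}_s)^m\Bigr)\mathrm ds .
\]
By the coupling bound, the right-hand side is at most $2C_{m,T}\int_0^t D(s)\,\mathrm ds$, where $D(s)=\mathbb E\int\|y_1-y_2\|^m(u,s)\,\mu_0(\mathrm du)$. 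Iterating, the $n$-th iterates satisfy $D_n(t)\le (2C_{m,T}t)^n/n!\,D_0$. This gives convergence of the Picard sequence in $\mathcal X_T$, first for $\mu_0$-a.e.\ $u$. The fact that $\Phi$ maps $\mathcal X_T$ into itself follows from linear growth, which the Lipschitz condition implies together with continuity (giving finiteness of $a(0,\delta_0,t)$ and of $\int\|b(0,\delta_0,t,q)\|^2\,\mathrm dq$ on $[0,T]$). Uniqueness follows from the same inequality and Gronwall's lemma. Letting $T\to\infty$ consistently gives the global solution.

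The main obstacle is that \eqref{eq:interacting} requires $x(u,t)$ for \emph{every} $u\in\mathbb R^d$, not only $\mu_0$-a.e., while the contraction above only controls the $\mu_0$-average. I would handle this in two stages. First I obtain the fixed point for the measure process $\mu_t$, equivalently for $x$ on the support of $\mu_0$. Then, with $\mu_t$ now a fixed adapted $\mathfrak M_m$-valued process, I solve the ordinary SDE
\[
\mathrm dx(u,t)=a(x,\mu_t,t)\,\mathrm dt+\int b(x,\mu_t,t,q)\,W(\mathrm dt,\mathrm dq)
\]
for each $u$. It has Lipschitz coefficients in $x$ and continuity in $t$ (this is where continuity of $a,b$ and of $t\mapsto\mu_t$ in $\gamma_m$ is used), so standard theory applies. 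Kolmogorov's continuity criterion, applied via the $m$-th moment estimate $\mathbb E\|x(u_1,t)-x(u_2,t)\|^m\le C\|u_1-u_2\|^m$, then yields a continuous version of the flow. Finally I check consistency: the resulting flow pushes $\mu_0$ forward to the same $\mu_t$, by the uniqueness of the first stage. That closes the argument.
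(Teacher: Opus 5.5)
The paper does not prove Theorem~\ref{existence}. It quotes the result from \cite{Dorogovtsev_2024}, so there is no internal argument to compare with. Your successive-approximation scheme is the standard route for equations with interaction, and it is correct in outline. Its key lemma is the push-forward coupling bound $\gamma_m(\mu_0\circ y_1(\cdot,t)^{-1},\mu_0\circ y_2(\cdot,t)^{-1})^m\le\int\|y_1(u,t)-y_2(u,t)\|^m\,\mu_0(\mathrm du)$.

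The ``main obstacle'' you identify can be avoided. The coupling bound only needs the $\mu_0$-average to be controlled, so you can run the same Picard scheme in the weighted norm $\sup_{t\le T}\sup_{u\in\mathbb R^d}(1+\|u\|^m)^{-1}\,\mathbb E\|y_1(u,t)-y_2(u,t)\|^m$. This norm dominates the $\mu_0$-average up to the factor $\int(1+\|v\|^m)\,\mu_0(\mathrm dv)$. Dividing the pointwise-in-$u$ version of your estimate by $1+\|u\|^m$ closes the Gronwall iteration. The iterates then converge for every $u$ at once, and no second stage is needed.

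Several steps need repair.

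\emph{Integrability of $b$.} Continuity of $b$ does not give $\int\|b(0,\delta_0,t,q)\|^2\,\mathrm dq<\infty$. The hypotheses as quoted control only differences of $b$: $b\equiv1$ satisfies all of them, yet $\int_0^t\!\int b\,W(\mathrm ds,\mathrm dq)$ is undefined. Local boundedness of $t\mapsto\int\|b(0,\delta_0,t,q)\|^2\,\mathrm dq$ (for instance, continuity of $b$ as an $L^2(\mathrm dq)$-valued map) must be added as a hypothesis. It cannot be derived from the stated continuity.

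\emph{The case $m<2$.} After Burkholder--Davis--Gundy you pass from $\mathbb E\bigl(\int_0^t g\,\mathrm ds\bigr)^{m/2}$ to $\int_0^t\mathbb E\,g^{m/2}\,\mathrm ds$, where $g=\int\|b_1-b_2\|^2\,\mathrm dq$. This is Hölder with exponent $m/2$ and requires $m\ge2$. For $1\le m<2$ the inequality goes the wrong way, so your estimate is unjustified there. Either restrict to $m\ge2$, or use second moments in the weighted norm above, bounding $\mathbb E\,\gamma_m^2$ by Minkowski's integral inequality.

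\emph{Kolmogorov's criterion.} The criterion needs the exponent on the right-hand side to exceed the parameter dimension $d$. Your $m$-th moment bound therefore suffices only if $m>d$. Use $p$-th moments with $p>d$ instead; these are available for the frozen-measure SDE because only the Lipschitz constant in $x$ enters the difference.

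\emph{Consistency.} This does not follow from first-stage uniqueness. The stage-two flow is a fixed point of your map only once you know it pushes $\mu_0$ forward to $\mu_t$, which is exactly what is being proved. It follows instead from pathwise uniqueness of the frozen-measure SDE, which gives $x(u,\cdot)=y(u,\cdot)$ a.s.\ for $\mu_0$-a.e.\ $u$, combined with Fubini.
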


The theory developed in~\cite{Dorogovtsev_2003,Dorogovtsev_2024} provides a natural starting point for the present work. However, our objective is different. Rather than studying only the measure-valued process $(\mu_t)_{t\geq0}$, we seek a Hilbert-space representation of its dynamics and use this representation to study the operator-theoretic structure of the resulting stochastic process.

We therefore introduce a kernel-based lifting of the measure-valued process since the space of probability measures is not a linear vector space.  

Let $h:\mathbb R^d\to\mathcal H$ be a smooth map into a real separable Hilbert space such that
\begin{equation}
(h(u),h(v))_{\mathcal H}
=
\Gamma(u-v),
\label{eq:kernel_inner}
\end{equation}
where $\Gamma$ is a positive-definite kernel. We define the lifting map by
\begin{definition}
\begin{equation}
H(\mu)
=
\int_{\mathbb R^d}h(u)\,\mu(\mathrm{d}u).
\label{eq:lift}
\end{equation}
\end{definition}

As it will be shown later in Lemma~\ref{separates}, under suitable assumptions on the kernel, the map $H$ separates probability measures. In particular, if the Fourier transform $\widehat{\Gamma}$ is nonzero everywhere, then $H$ is injective. Thus, on the class of measures under consideration, the measure-valued process can be represented by the Hilbert-valued process
\begin{equation}
h_t=H(\mu_t).
\end{equation}
This representation does not replace the Wasserstein description of the original process; rather, it provides a complementary linear realization in which the measure-valued dynamics can be studied using Hilbert-space stochastic analysis.

This lifting leads to the main object of the present paper. We study the Markov process $(h_t)_{t\geq0}$ and its infinitesimal generator. Under the assumptions of Theorem~\ref{existence}, we obtain the following representation.

\begin{theorem}
Assume that the coefficients satisfy the conditions of Theorem~\ref{existence}. Then $\{h_t\}_{t\geq0}$ is a Markov process, and its infinitesimal generator is given by
\begin{equation}
(\mathcal L_s\Phi)(h)
=
D\Phi(h)
\bigl[
A_s(\mu(h))+G_s(\mu(h))
\bigr]
+
\frac12
\sum_{k=1}^{\infty}
D^2\Phi(h)
\bigl(
\Sigma_{s,k}(\mu(h)),
\Sigma_{s,k}(\mu(h))
\bigr),
\end{equation}
where explicit formulas for $A_s$, $G_s$, and $\Sigma_{s,k}$ are given in Section~\ref{generator}. The associated transition semigroup is strongly continuous and contractive.
\end{theorem}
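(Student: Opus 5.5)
The plan has four steps: establish the Markov property at the level of measures and transport it through the injective lift $H$; obtain the generator from Itô's formula applied to $h_t=\int h(x(u,t))\,\mu_0(\mathrm du)$; then check strong continuity and contraction of the semigroup.

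\textbf{Markov property.} By Theorem~\ref{existence}, for each $s\ge0$ and each initial measure $\nu\in\mathfrak M_m$, equation~\eqref{eq:interacting} started at time $s$ from $\nu$ has a unique solution. Denote its flow by $x_{s,t}(\cdot)$ and set $\mu_{s,t}^\nu=\nu\circ x_{s,t}^{-1}$. Pathwise uniqueness gives the flow identity $\mu_{0,t}^{\mu_0}=\mu_{s,t}^{\mu_s}$ for $s\le t$. Moreover, $\mu_{s,t}^\nu$ is measurable with respect to the increments of $W$ on $[s,t]$, and these are independent of $\mathcal F_s$. A standard freezing argument (the Markov property of solutions to SDEs with Lipschitz coefficients, with a measurable dependence on the initial datum $\nu$) then shows that $(\mu_t)$ is Markov on $\mathfrak M_m$. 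Lemma~\ref{separates} states that $H$ is injective, so $\mu(h)=H^{-1}(h)$ is well defined on $H(\mathfrak M_m)$. Hence $h_t=H(\mu_t)$ is Markov with transition operators $P_{s,t}\Phi(h)=\mathbb E\,\Phi(H(\mu_{s,t}^{\mu(h)}))$. We will need $H^{-1}$ to be Borel on the image. I expect to get this from continuity of $H$ in $\gamma_m$ together with a Kuratowski/Lusin–Souslin-type argument, since an injective Borel map between Polish spaces has a Borel image and a Borel inverse.

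\textbf{Generator.} Write $h_t=\int h(x(u,t))\,\mu_0(\mathrm du)$. For each $u$, apply Itô's formula to $h(x(u,t))$, viewed as an $\mathcal H$-valued function of the diffusion $x(u,t)$. Expand $W$ in an orthonormal basis $\{e_k\}$ of $L^2(\mathbb R)$ as $W(\mathrm dt,\mathrm dq)=\sum_k e_k(q)\,\mathrm d w_k(t)$, and set $b_k(u,\nu,t)=\int b(u,\nu,t,q)e_k(q)\,\mathrm dq$. Integrating over $\mu_0$ and changing variables to $\mu_s$ gives the candidate coefficients
\[
A_s(\nu)=\int Dh(v)\,a(v,\nu,s)\,\nu(\mathrm dv),\qquad
\Sigma_{s,k}(\nu)=\int Dh(v)\,b_k(v,\nu,s)\,\nu(\mathrm dv),
\]
\[
G_s(\nu)=\frac12\int\sum_k D^2h(v)\bigl(b_k(v,\nu,s),b_k(v,\nu,s)\bigr)\,\nu(\mathrm dv).
\]
A stochastic Fubini theorem then yields $\mathrm dh_t=(A_t+G_t)(\mu_t)\,\mathrm dt+\sum_k\Sigma_{t,k}(\mu_t)\,\mathrm dw_k$. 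The integrability needed for this follows from the linear growth of $a,b$ (a consequence of the Lipschitz condition), from boundedness of $Dh,D^2h$, and from the moment bounds for $\mu_t\in\mathfrak M_m$. Finally, apply the Hilbert-space Itô formula to $\Phi(h_t)$ for $\Phi\in C_b^2(\mathcal H)$ and take expectations. The local-martingale term vanishes in expectation, and $\sum_k\|\Sigma_{s,k}\|^2<\infty$ follows from Parseval applied to $\int\|b\|^2\,\mathrm dq$. Dividing by $t-s$ and letting $t\downarrow s$ then yields the stated formula for $\mathcal L_s$. Passing to this limit uses continuity of $a,b$ in all variables and the continuity $\gamma_m(\mu_t,\mu_s)\to0$ in $L^2$.

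\textbf{Semigroup properties.} Contractivity on bounded measurable functions with the sup norm is immediate, since $P_{s,t}$ is an expectation. For strong continuity, work on bounded uniformly continuous $\Phi$, or on $C_b$ with the mixed topology. There $|P_{s,t}\Phi(h)-\Phi(h)|\le\mathbb E\,\omega_\Phi(\|h_t-h\|)$, where $\omega_\Phi$ is a modulus of continuity. The bound $\mathbb E\|h_t-h_s\|^2\le C(t-s)$ from the SDE for $h_t$ makes this small. This estimate is uniform in $h$ on sets where the moments of $\mu(h)$ are bounded, and the main obstacle is uniformity over all $h$. I would try first to restrict the state space to measures with uniformly bounded $m$-th moments, or to use $\gamma_m$-balls with an exhaustion argument, so that the Lipschitz constants make the increment bound uniform.
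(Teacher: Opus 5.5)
Your Markov and generator steps follow the paper's route in Theorem~\ref{Markovian} and Theorem~\ref{thm:generator}: It\^o's formula for $h(x(u,t))$, then stochastic Fubini, then the Hilbert-space It\^o formula for $\Phi(h_t)$. Two differences are harmless. First, you get measurability of the transition kernel from Lusin--Souslin rather than from continuity in $h_0$ via Theorem~\ref{convergence thm}. Yours is the sounder route, since convergence of $\mathrm H(\mu_0^n)$ in $\mathcal H$ does not give $\gamma_2$-convergence of $\mu_0^n$. Second, you work with linear growth and moment bounds instead of the bounded, time-homogeneous coefficients of (G2) and (G4). This only costs you a uniform-integrability check when you let $t\downarrow s$ (e.g.\ $L^1$-convergence of second moments from $\gamma_2(\mu_r,\mu_s)\to0$ in $L^2$), plus $L^2(\mathrm dq)$-continuity of $b$ in time.

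The genuine gap is strong continuity, which you leave open, and your suggested remedies do not close it. A set of measures with uniformly bounded $m$-th moments is not preserved by the flow, so $P_t$ does not act on functions defined on it. Exhausting by $\gamma_m$-balls gives at best locally uniform convergence (a mixed-topology notion), not $\sup_h|P_t\Phi(h)-\Phi(h)|\to0$. In fact, under the hypotheses of Theorem~\ref{existence} alone the sup-norm claim is false. Take $a(x,\mu)=x$ and $b=0$: the solution from $\delta_u$ gives $h_t=h(ue^{t})$, and $\sup_u\|h(ue^{t})-h(u)\|_{\mathcal H}=\sqrt2$ for every $t>0$. A bounded Lipschitz $\Phi$, e.g.\ a truncated distance to the lifts of a sufficiently sparse union of unit intervals along a ray, then has $\|P_t\Phi-\Phi\|_\infty\ge1$ for all small $t$. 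With the mean-field drift $a(x,\mu)=\int z\,\mu(\mathrm dz)$, $P_t$ does not even preserve continuity in the $\mathcal H$-topology. Indeed, $\mu_n=(1-\tfrac1n)\delta_0+\tfrac1n\delta_{n^2e_1}$ satisfies $\mathrm H(\mu_n)\to\mathrm H(\delta_0)$, but its mean is $n$, so its bulk is carried to $n(e^{t}-1)e_1$ while $\delta_0$ stays put.

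The missing ingredient, which the paper supplies as an extra hypothesis (Assumption~\ref{ass:semigroup_basic} in Theorem~\ref{thm:semigroup_derived}), is a $\gamma_2$-compact set $K$ that is \emph{invariant} under the dynamics. Invariance lets $P_t$ act on $C(G)$ with $G=\mathrm H(K)$. Compactness plus injectivity makes $\mathrm H|_K$ a homeomorphism, so the $\gamma_2$-stability estimate gives the Feller property. Compactness also bounds second moments uniformly, hence $\sup_{\xi\in G}\mathbb E_\xi\|h_t-\xi\|_{\mathcal H}^2\le\|Dh\|_\infty^2C_K(t+t^2)$. Finally, compactness makes every $\varphi\in C(G)$ uniformly continuous, which is exactly what your modulus-of-continuity estimate needs. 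Without such a restriction, or a weaker notion of continuity, the semigroup part of the statement does not go through.
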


A central issue is then to identify a sufficiently large class of test functions for the generator. For this purpose we use polynomial functionals of measures. Let $\mathcal P(K)$ denote the algebra generated by the linear functionals
\begin{equation}
\mu\mapsto\langle f,\mu\rangle
=
\int_{\mathbb R^d}f(u)\,\mu(\mathrm{d}u),
\qquad f\in C_b(\mathbb R^d).
\end{equation}
These functionals are naturally adapted to the measure-valued structure of the equation: applying the generator to a product of such functionals produces again a finite combination of measure polynomial functionals. After transferring them through the injective map $H$, they provide a concrete class of functions on the lifted state space.

We prove that an appropriate weighted completion of this polynomial class satisfies a dense range condition for the generator. More precisely, there exists a weighted polynomial space $\mathcal B_\rho$ continuously embedded in $C(G)$ such that, for every $\lambda_0>M_\rho$,
\begin{equation}
\overline{\operatorname{Ran}(\lambda_0I-\mathcal L)}^{C(G)}
=
C(G).
\label{eq:dense_range}
\end{equation}
Consequently, the polynomial functionals form a core for the generator.

The measure-valued and stochastic-flow aspects of equations with interaction have also been investigated from several other directions. A. A. Dorogovtsev and Wei{\ss}~\cite{dorogovtsev_weiss_2023} studied intermittency phenomena for mass distributions of stochastic flows with interaction. Their analysis uses the evolution of mass densities and Lyapunov exponents and establishes intermittency under dissipativity assumptions. This is closely related to the asymptotic part of the present work, but the objects of study are different. Whereas~\cite{dorogovtsev_weiss_2023} investigates the spatial concentration and intermittency of the mass distribution itself, we study the asymptotic behavior after the measure-valued process has been represented in a Hilbert space.

In another direction, recent work has considered ergodicity and functional expansions for the measure-valued processes generated by equations with interaction. For example, A. A. Dorogovtsev and Wei{\ss}~\cite{dorogovtsev_weiss_2026} study equations with interaction on compact Riemannian manifolds and derive an Itô--Wiener type decomposition for the associated measure-valued process. Their approach remains directly on the Wasserstein space and exploits differentiability with respect to the measure variable. Our approach is complementary: instead of differentiating functionals on the Wasserstein space, we use an injective Hilbert-space realization of the measure and study the resulting linear infinite-dimensional state space.

We next consider asymptotic stability. A useful notion in the theory of equations with interaction is shift compactness, which naturally appears when the system possesses translation invariance. Following~\cite{Dorogovtsev_2024}, we use this notion to describe compactness of families of random measures modulo spatial translations.

\begin{definition}
A set of random measures $\{\mu_\alpha;\alpha\in\mathcal A\}\subset\mathfrak M_n$ is called shift compact if, for every $\alpha\in\mathcal A$, there is a random vector $u_\alpha\in\mathbb R^d$ such that
$\{\mu_\alpha-u_\alpha:\alpha\in\mathcal A\}$ is weakly compact in $\mathfrak M_n$.
\end{definition}

For dissipative interaction kernels and sufficiently small noise, A. A. Dorogovtsev~\cite{Dorogovtsev_2024} obtained shift-compactness of the family $\{\mu_t:t\geq0\}$ under conditions of the form
\begin{equation}
(u-v,\varphi(u)-\varphi(v))
\leq
-\alpha\|u-v\|^2
\end{equation}
and
\begin{equation}
2\alpha-B^2(2n+1)\geq0.
\end{equation}
In the present paper we further investigate the relation between contraction of the underlying measure-valued dynamics and asymptotic properties of observables in the Hilbert representation.

Under the above assumptions, we obtain convergence results for the centered measures and for the corresponding Gaussian observables $\eta_k(t)=(H(\mu_t),\xi_k)$. In particular, we prove convergence of the covariance structure in the form
\begin{equation}
\operatorname{Cov}(\eta_k(t),\eta_k(s))
\longrightarrow
\operatorname{Cov}_{\infty}(t-s),
\qquad k\to\infty.
\label{eq:cov}
\end{equation}
We also construct a feedback drift for which a prescribed functional $F(\mu_t)$ converges to zero in $L^2$, yielding a corresponding basin-of-attraction result.

Finally, we apply the lifting construction to point vortex systems. Such systems provide a particularly natural deterministic example of an equation with interaction: the velocity of each vortex depends on the positions of all other vortices. For infinitesimal vortices, the Helmholtz system is
\begin{equation}
\mathrm{d}y_k(t)
=
\sum_{\substack{j=1\\j\neq k}}^N
p_j
\nabla^\perp
\ln\|y_k(t)-y_j(t)\|
\,\mathrm{d}t,
\qquad
1\leq k\leq N.
\label{vortex dynamics}
\end{equation}
where $p_j$ is the infinitesimal vorticity. Introducing the empirical vorticity measure
\begin{equation}
\mu_t^N
=
\sum_{i=1}^N\kappa_i\delta_{x_i(t)},
\qquad
\sum_{i=1}^N\kappa_i=1,
\label{eq:empirical}
\end{equation}
allows we use the Hilbert lifting to study the effective dimension of clustered vortex configurations.

Suppose that the vortices form $m$ clusters with intra-cluster diameter at most $\delta$ and inter-cluster distance at least $R$. For the Gaussian kernel, we prove that the quadratic entropy of the lifted configuration satisfies
\begin{equation}
\mathcal H_2(A_t^N)
\leq
m
+
CN\delta^2
+
Cm^2e^{-R^2/2}.
\end{equation}
Thus, when the clusters are well separated and internally concentrated, the effective Hilbert-space complexity is controlled by the number of clusters rather than directly by the number of particles.

The paper is organized as follows. In Section~\ref{sec:lifting}, we introduce a kernel-based Hilbert-space representation of the measure-valued process and establish conditions under which the lifting map is injective and continuous. We also study the properties of the lifted state space and prove a convergence result for the lifted processes. In Section~\ref{sec:markov}, we prove that the lifted process is Markovian, derive its infinitesimal generator, and construct the associated transition semigroup. We further establish a dense range condition for a weighted space of polynomial functionals and show that these functionals form a core for the generator. In Section~\ref{sec:stability}, we investigate the asymptotic behavior of the lifted dynamics under dissipativity assumptions and construct a feedback drift that yields a prescribed basin of attraction. Finally, in Section~\ref{sec:cluster}, we apply the kernel lifting to Helmholtz point vortex systems and obtain estimates for the quadratic entropy of the corresponding lifted dynamics.

\section{Kernel Lift to Equation with Interaction}\label{sec:lifting}
\subsection{Lifting from space of measures to Hilbert space}
Let $\mathcal{H}$ be a real, separable, infinite-dimensional Hilbert space. 
Assume that $h:\mathbb{R}^d \to \mathcal{H}$ is twice continuously differentiable, i.e., $h \in C^2(\mathbb{R}^d;\mathcal{H})$, and satisfies
\begin{equation}
\langle h(u), h(v) \rangle_{\mathcal{H}} = \Gamma(u-v),
\end{equation}
where $\Gamma:\mathbb{R}^d \to \mathbb{R}$ is a continuous positive-definite kernel.

Define the mapping $\mathrm{H}:\mathfrak{M}\to\mathcal{H}$ by
\begin{equation}
\mathrm{H}(\mu)=\int_{\mathbb{R}^d} h(u)\,\mu(\mathrm{d}u).
\end{equation}
We refer to the mapping $\mathrm{H}$ as a \emph{lifting} from $\mathfrak{M}$ into the Hilbert space $\mathcal{H}$.

\subsubsection{One to one correspondence with measures}
\begin{lemma}\label{separates}
If the Fourier transform of the kernel function $\Gamma$ satisfies that for arbitrary $\lambda\in\mathbb{R}^d$
\begin{equation}
\hat\Gamma(\lambda)\not=0,
\end{equation} 
then $\mathrm{H}$ is an injection.
\end{lemma}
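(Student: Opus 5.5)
Since $\mathrm{H}$ is linear in $\mu$, injectivity reduces to showing that $\mathrm{H}(\mu)=\mathrm{H}(\nu)$ forces $\mu=\nu$. We pass to the signed measure $\sigma=\mu-\nu$, which is finite with total mass zero, and compute the squared norm of its lift in terms of $\Gamma$. Continuity of $h$ and $\|h(u)\|^2_{\mathcal H}=\Gamma(0)<\infty$ make $h$ bounded, so $\mathrm{H}(\sigma)=\int h\,\mathrm d\sigma$ is a well-defined Bochner integral for every finite signed measure. Moving the inner product inside the integrals (bounded linear functionals commute with Bochner integrals) gives
\begin{equation*}
\|\mathrm{H}(\mu)-\mathrm{H}(\nu)\|_{\mathcal H}^2=\iint_{\mathbb R^d\times\mathbb R^d}\Gamma(u-v)\,\sigma(\mathrm{d}u)\,\sigma(\mathrm{d}v).
\end{equation*}

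Next we use Bochner's theorem. Since $\Gamma$ is continuous and positive definite, $\Gamma(x)=\int e^{i(\lambda,x)}\,\hat\Gamma(\mathrm d\lambda)$ for a finite nonnegative spectral measure. Under the hypothesis we read $\hat\Gamma$ as a density with respect to Lebesgue measure, i.e. $\hat\Gamma(\lambda)\,\mathrm d\lambda$, which is nonnegative and, by assumption, strictly positive everywhere. Substituting this into the double integral and applying Fubini, which is justified because everything is bounded by $\hat\Gamma(\mathbb R^d)\,|\sigma|(\mathbb R^d)^2$, yields
\begin{equation*}
\|\mathrm{H}(\mu)-\mathrm{H}(\nu)\|_{\mathcal H}^2=\int_{\mathbb R^d}|\hat\sigma(\lambda)|^2\,\hat\Gamma(\lambda)\,\mathrm d\lambda,\qquad \hat\sigma(\lambda)=\int e^{i(\lambda,u)}\sigma(\mathrm du)=\hat\mu(\lambda)-\hat\nu(\lambda).
\end{equation*}

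If $\mathrm{H}(\mu)=\mathrm{H}(\nu)$, the left side vanishes. Because $\hat\Gamma>0$ everywhere, it follows that $\hat\mu=\hat\nu$ Lebesgue-almost everywhere. Characteristic functions of probability measures are continuous, so the two agree on all of $\mathbb R^d$. The uniqueness theorem for characteristic functions then gives $\mu=\nu$.

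The main obstacle is interpreting the hypothesis $\hat\Gamma(\lambda)\neq0$. For a general continuous positive-definite kernel the spectral measure may have atoms or fail to have a density. The proof therefore needs the spectral measure to be absolutely continuous, which holds for instance when $\Gamma\in L^1$, as for the Gaussian kernel, with a density that is everywhere (or almost everywhere) positive. A spectral measure whose support is all of $\mathbb R^d$ would also suffice, using continuity of $\hat\sigma$. I would state this interpretation explicitly at the start of the proof.
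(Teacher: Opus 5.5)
Your proof is correct, but it reaches the Fourier side through a different identity than the paper. The paper pairs $\mathrm H(\mu^1)-\mathrm H(\mu^2)$ with each $h(v)$ to get the \emph{linear} statement $\Gamma*(\mu^1-\mu^2)\equiv0$. It then takes Fourier transforms to get $\hat\Gamma\cdot\widehat{\mu^1-\mu^2}\equiv0$. You instead expand the \emph{quadratic} quantity $\|\mathrm H(\mu)-\mathrm H(\nu)\|_{\mathcal H}^2=\iint\Gamma(u-v)\,\sigma(\mathrm du)\,\sigma(\mathrm dv)$ via Bochner's theorem into $\int|\hat\sigma|^2\,\hat\Gamma(\lambda)\,\mathrm d\lambda$.

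The paper's route is shorter and gives the stronger intermediate fact that $\Gamma*\sigma$ vanishes identically. However, it leaves implicit in what sense the bounded, in general non-integrable function $\Gamma*\sigma$ is Fourier-transformed. This is classical when $\Gamma\in L^1$, as for the Gaussian kernel; otherwise one needs distributions, or Bochner plus uniqueness of Fourier--Stieltjes transforms. The paper also reads $\hat\Gamma$ pointwise without comment.

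Your route needs only Bochner's theorem and Fubini applied to absolutely convergent integrals. It also forces you to say explicitly what the hypothesis $\hat\Gamma(\lambda)\neq0$ must mean, namely a spectral density positive almost everywhere. And it shows that full support of the spectral measure already suffices. That is exactly the condition under which the paper's final step $\hat\Gamma\cdot\hat\sigma\equiv0\Rightarrow\hat\sigma\equiv0$ is valid as well.
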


\begin{proof}
Assume there exist $\mu^1,\mu^2\in \mathfrak{M}$ such that $\mathrm{H}(\mu^1)=\mathrm{H}(\mu^2)$. Then
\begin{equation}
\Gamma*(\mu^1-\mu^2)(v)=\int_{\mathbb{R}^d}\Gamma(v-u)(\mu^1-\mu^2)(\mathrm{d}u)=\left(h(v),\int_{\mathbb{R}^d}h(u)\mu^1(\mathrm{d}u)-\int_{\mathbb{R}^d}h(u)\mu^2(\mathrm{d}u)\right)=0
\end{equation}
Take the Fourier transform from both sides, then
\begin{equation}
\hat\Gamma\cdot\widehat{\mu^1-\mu^2}\equiv0.
\end{equation}
Due to $\hat\Gamma(\lambda)\not=0$ for any $\lambda$, one can obtain
\begin{equation}
\mu^1=\mu^2
\end{equation}
which completes the proof.
\end{proof}

\subsubsection{Continuity of the lifting}

\begin{lemma}
Assume that $\Gamma$ is bounded and continuous. Then the mapping 
$\mathrm H:(\,\mathfrak M,\gamma_0) \to (\,\mathcal H,\|\cdot\|_{\mathcal{H}})$ is continuous.
\end{lemma}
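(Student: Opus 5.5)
Note first that $\gamma_0$ is not defined in the earlier definition (which only treats $n\ge1$). I will read it as the metric of weak convergence on $\mathfrak M$, for instance a bounded metric such as the Lévy--Prokhorov or bounded-Lipschitz distance. The proof then only uses the fact that $\gamma_0(\mu_n,\mu)\to0$ means $\int f\,\mathrm d\mu_n\to\int f\,\mathrm d\mu$ for every $f\in C_b(\mathbb R^d)$, and for every bounded continuous function on $\mathbb R^d\times\mathbb R^d$ as well. Since $(\mathfrak M,\gamma_0)$ is a metric space, it suffices to prove sequential continuity.

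The key step is to expand the squared norm of the difference of lifts through the kernel identity $\langle h(u),h(v)\rangle_{\mathcal H}=\Gamma(u-v)$ and Fubini's theorem for Bochner integrals. Boundedness of $\Gamma$ gives $\|h(u)\|^2_{\mathcal H}=\Gamma(0)<\infty$, so $h$ is bounded and continuous, and $\mathrm H(\mu)$ is a well-defined Bochner integral. One then gets
\begin{equation*}
\|\mathrm H(\mu_n)-\mathrm H(\mu)\|_{\mathcal H}^2
=\iint\Gamma(u-v)\,\mu_n(\mathrm du)\mu_n(\mathrm dv)
-2\iint\Gamma(u-v)\,\mu_n(\mathrm du)\mu(\mathrm dv)
+\iint\Gamma(u-v)\,\mu(\mathrm du)\mu(\mathrm dv).
\end{equation*}
This reduces the problem to the convergence of double integrals of the bounded continuous function $(u,v)\mapsto\Gamma(u-v)$.

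Next, recall the standard fact that $\mu_n\Rightarrow\mu$ implies $\mu_n\otimes\mu_n\Rightarrow\mu\otimes\mu$ and $\mu_n\otimes\mu\Rightarrow\mu\otimes\mu$ on $\mathbb R^{2d}$. One proof: the characteristic functions factor and converge pointwise, so Lévy's continuity theorem applies. Alternatively, tightness of $\{\mu_n\}$ gives tightness of the products, and convergence can be checked on product test functions $f(u)g(v)$. Applying this with the test function $\Gamma(u-v)\in C_b(\mathbb R^{2d})$, each of the three terms converges to $\iint\Gamma(u-v)\,\mu(\mathrm du)\mu(\mathrm dv)$, with coefficients summing to $1-2+1=0$. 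Hence $\|\mathrm H(\mu_n)-\mathrm H(\mu)\|_{\mathcal H}\to0$.

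The expected obstacle is justifying the passage from weak convergence of the marginals to weak convergence of the product measures. I will handle it with the characteristic-function argument, which is the cleanest route.

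An alternative route avoids products but needs more structure. Weak convergence gives $\langle \mathrm H(\mu_n),e\rangle\to\langle \mathrm H(\mu),e\rangle$ for each $e\in\mathcal H$, since $u\mapsto\langle h(u),e\rangle$ is in $C_b$. One would then upgrade weak to strong convergence in $\mathcal H$ via the norm convergence above. That norm convergence, however, still requires the product argument, so I will use the direct expansion.
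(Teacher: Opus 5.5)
Your proof is correct and is essentially the paper's argument. The paper shows $\mathrm H(\mu_n)\rightharpoonup\mathrm H(\mu)$ weakly in $\mathcal H$ and $\|\mathrm H(\mu_n)\|_{\mathcal H}\to\|\mathrm H(\mu)\|_{\mathcal H}$ via $\mu_n\otimes\mu_n\Rightarrow\mu\otimes\mu$ applied to $\Gamma(u-v)$; your three-term expansion is exactly the unstated Radon--Riesz step combining these, with your cross term being the weak-convergence step for $\varphi=\mathrm H(\mu)$. Your version is slightly more self-contained, since it makes that final step explicit and also justifies the product-measure convergence that the paper simply asserts.
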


\begin{proof}
Since the topology generated by $\gamma_0$ is equivalent to weak topology \cite{Villani2003Topics}, it is enough to check that $\mathrm{H}$ is continuous in $\mathfrak{M}$ equipped with weak topology.

Let $\mu_n \Rightarrow \mu$ weakly as $n\to\infty$. For any $\varphi \in \mathcal H$, we have
\begin{equation}
\langle \varphi, \mathrm H(\mu_n) \rangle_{\mathcal H}
= \int_{\mathbb{R}^d} \langle \varphi, h(u) \rangle_{\mathcal H} \,\mu_n(\mathrm d u).
\end{equation}
Since $h$ is continuous and bounded, the function 
$u \mapsto \langle \varphi, h(u) \rangle_{\mathcal H}$ is bounded and continuous. 
Hence,
\begin{equation}
\langle \varphi, \mathrm H(\mu_n) \rangle_{\mathcal H}
\to
\langle \varphi, \mathrm H(\mu) \rangle_{\mathcal H}, \quad \text{as } n\to\infty,
\end{equation}
which implies that $\mathrm H(\mu_n)\rightharpoonup \mathrm H(\mu)$ weakly in $\mathcal H$.

It remains to show convergence of norms. Observe that
\begin{equation}
\|\mathrm H(\mu_n)\|_{\mathcal H}^2
= \iint_{\mathbb{R}^d \times \mathbb{R}^d} \Gamma(u-v)\,\mu_n(\mathrm d u)\mu_n(\mathrm d v).
\end{equation}
Since $\mu_n \Rightarrow \mu$, it follows that $\mu_n \otimes \mu_n \Rightarrow \mu \otimes \mu$ weakly on $\mathbb{R}^d \times \mathbb{R}^d$. 
Because $\Gamma(u-v)$ is bounded and continuous on $\mathbb{R}^{2d}$, we obtain
\begin{equation}
\iint \Gamma(u-v)\,\mu_n(\mathrm d u)\mu_n(\mathrm d v)
\to
\iint \Gamma(u-v)\,\mu(\mathrm d u)\mu(\mathrm d v),\quad \text{as } n\to\infty.
\end{equation}
Thus,
\begin{equation}
\|\mathrm H(\mu_n)\|_{\mathcal H} \to \|\mathrm H(\mu)\|_{\mathcal H}.
\end{equation}
which completes the proof.
\end{proof}

\subsection{Properties of the lifted set}
For $K\subset\mathfrak{M}$ define the \emph{lifted set}
\begin{equation}\label{eq:G_def}
\mathcal G
=
\left\{
\,\mathrm{H}(\mu)
:
\mu\in K
\right\}
\subset \mathcal H .
\end{equation}

\begin{proposition}
The lift set $\mathcal G$ is bounded. Moreover, if $K$ is a compact set under weak convergence, then $\mathcal G$ is a compact subset of $\mathcal{H}$.
\end{proposition}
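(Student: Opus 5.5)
Boundedness comes straight from the kernel identity. For any $\mu\in K\subset\mathfrak M$ we have
\begin{equation}
\|\mathrm H(\mu)\|_{\mathcal H}^2=\iint_{\mathbb R^d\times\mathbb R^d}\Gamma(u-v)\,\mu(\mathrm du)\mu(\mathrm dv).
\end{equation}
Positive-definiteness gives $|\Gamma(x)|\le\Gamma(0)=\|h(u)\|_{\mathcal H}^2$ for every $u$. Therefore $\|\mathrm H(\mu)\|_{\mathcal H}\le\Gamma(0)^{1/2}$, uniformly in $\mu$. A more elementary route gives the same bound: Bochner-integral (Jensen) convexity yields $\|\mathrm H(\mu)\|\le\int\|h(u)\|\,\mu(\mathrm du)=\Gamma(0)^{1/2}$. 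In particular $\Gamma$ is automatically bounded, and $h$ takes values in the sphere of radius $\Gamma(0)^{1/2}$. This also justifies the use of boundedness of $h$ in the preceding continuity lemma. Finally, the Bochner integral defining $\mathrm H(\mu)$ exists, since $h$ is continuous (hence strongly measurable) with constant norm.

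For compactness I will use the standard fact that a continuous image of a compact set is compact. By the continuity lemma just proved, $\mathrm H:(\mathfrak M,\gamma_0)\to(\mathcal H,\|\cdot\|_{\mathcal H})$ is continuous, where $\gamma_0$ metrizes weak convergence. Take any sequence $\mathrm H(\mu_n)\in\mathcal G$. Weak compactness of $K$ gives a subsequence $\mu_{n_j}\Rightarrow\mu\in K$; the weak topology on $\mathfrak M$ is metrizable, so sequential compactness is equivalent to compactness here. The continuity lemma then gives $\mathrm H(\mu_{n_j})\to\mathrm H(\mu)$ in norm, and $\mathrm H(\mu)\in\mathcal G$. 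Hence $\mathcal G$ is sequentially compact in the metric space $\mathcal H$, and therefore compact.

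The only point requiring care is that the continuity lemma assumes $\Gamma$ bounded and $h$ bounded. Both follow from the first step, via $|\Gamma|\le\Gamma(0)$ and $\|h(u)\|^2=\Gamma(0)$, so no extra hypothesis is needed. I would also note that "compact under weak convergence" is meant to include closedness, so that limits of subsequences stay in $K$. If only relative compactness were intended, the same argument would show that $\mathcal G$ is relatively compact, i.e.\ precompact. I expect no substantive obstacle: the norm convergence was already secured in the continuity lemma through the identity $\|\mathrm H(\mu_n)\|^2=\iint\Gamma\,d\mu_n\,d\mu_n$ together with weak convergence of $\mu_n\otimes\mu_n$.
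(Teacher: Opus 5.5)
Your proof is correct and follows the same route as the paper: boundedness from $\mathcal G\subset B_{\mathcal H}(0,\sqrt{\Gamma(0)})$, and compactness of $\mathcal G$ as the continuous image of the weakly compact set $K$ under $\mathrm H$. Your extra remarks correctly fill in details the paper leaves implicit: $\|h(u)\|_{\mathcal H}^2=\Gamma(0)$ makes $h$ and $\Gamma$ automatically bounded, so the continuity lemma applies, and metrizability of the weak topology lets the sequential continuity proved there suffice.
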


\begin{proof}
Due to $\mathcal G\subset B_\mathcal{H}\left(0,\sqrt{\Gamma(0)}\right)$, $\mathcal{G}$ is bounded. If $K$ is a compact set under weak convergence, then $\mathcal G$ is a compact subset of $\mathcal{H}$ since $\mathrm{H}$ is a continuous mapping from $\mathfrak{M}$ to $\mathcal{H}$.
\end{proof}

\subsection{Convergence}
Now we state the continuous dependence of $H(\mu_t)$ for $\mu_0$.
\begin{theorem}\label{convergence thm}
Assume the conditions of Theorem~\ref{existence} hold, and let
\(h:\mathbb R^d\to\mathcal H\) be bounded and globally Lipschitz.
If 
\begin{equation}
\mathbb{E}\,\gamma_2(\mu_0^n,\mu_0)\to0,\quad n\to \infty
\end{equation}
where $\gamma_2$ is 2-Wasserstein distance, then for every \(t>0\),
\begin{equation}
\mathrm H(\mu_t^n)\to \mathrm H(\mu_t)
\quad\text{in }L^2(\Omega;\mathcal H),\quad n\to\infty
\end{equation}
where
\begin{equation}
\mu_t^n=\mu_0^n\circ(x^n(\cdot,t))^{-1},
\qquad
\mu_t=\mu_0\circ(x(\cdot,t))^{-1}.
\end{equation}
\end{theorem}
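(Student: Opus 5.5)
The plan is to reduce the statement to the Wasserstein stability of solutions of equation~\eqref{eq:interacting} with respect to the initial measure, and then push this stability through the Lipschitz lifting. The first step is a deterministic estimate for the lifting itself. For any coupling $\varkappa\in C(\mu,\nu)$ we write
\begin{equation*}
\mathrm H(\mu)-\mathrm H(\nu)=\iint_{\mathbb R^d\times\mathbb R^d}\bigl(h(u)-h(v)\bigr)\,\varkappa(\mathrm du,\mathrm dv).
\end{equation*}
Since $h$ is globally Lipschitz with constant $L_h$, this gives $\|\mathrm H(\mu)-\mathrm H(\nu)\|_{\mathcal H}\le L_h\iint\|u-v\|\,\varkappa(\mathrm du,\mathrm dv)$. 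Taking the infimum over couplings yields
\begin{equation*}
\|\mathrm H(\mu)-\mathrm H(\nu)\|_{\mathcal H}\le L_h\,\gamma_1(\mu,\nu)\le L_h\,\gamma_2(\mu,\nu).
\end{equation*}
The last inequality is the Cauchy--Schwarz inequality applied to the optimal coupling. The statement therefore follows once we show $\mathbb E\,\gamma_2(\mu^n_t,\mu_t)^2\to0$.

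The second step controls $\gamma_2(\mu_t^n,\mu_t)$ using the flows. Let $\varkappa_0$ be an (optimal, measurably chosen) coupling of $\mu_0^n$ and $\mu_0$. Then $\varkappa_0\circ(x^n(\cdot,t),x(\cdot,t))^{-1}$ is a coupling of $\mu^n_t$ and $\mu_t$, so
\begin{equation*}
\gamma_2(\mu_t^n,\mu_t)^2\le\iint\|x^n(u,t)-x(v,t)\|^2\,\varkappa_0(\mathrm du,\mathrm dv).
\end{equation*}
We apply It\^o's formula to $\|x^n(u,t)-x(v,t)\|^2$ and use the Lipschitz condition of Theorem~\ref{existence}. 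Together with $\gamma_m(\mu^n_s,\mu_s)$ bounded by the same coupling integral (taking $m\le2$, or using the analogous $m$-th moment), this produces a Gronwall-type inequality for
\begin{equation*}
\Phi_n(t)=\mathbb E\iint\|x^n(u,t)-x(v,t)\|^2\,\varkappa_0(\mathrm du,\mathrm dv),
\end{equation*}
of the form $\Phi_n(t)\le\Phi_n(0)+C\int_0^t\Phi_n(s)\,\mathrm ds$. Here $\Phi_n(0)=\mathbb E\,\gamma_2(\mu^n_0,\mu_0)^2$. We conclude $\Phi_n(t)\le e^{Ct}\,\mathbb E\,\gamma_2(\mu_0^n,\mu_0)^2$. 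This is essentially the stability estimate underlying the existence proof in~\cite{Dorogovtsev_2024}, and we would cite or reproduce it.

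The main obstacle is the mismatch between the hypothesis, which is convergence in $L^1$ of $\gamma_2$, and the $L^2$ quantity produced by Gronwall; a further issue is that $\mu_0$ may be random, which requires care with measurability and adaptedness of the coupling. To get around the mismatch, I would use boundedness of $h$. We have $\|\mathrm H(\mu^n_t)-\mathrm H(\mu_t)\|\le 2\sup\|h\|$, and also $\le L_h\gamma_2(\mu^n_t,\mu_t)$. Hence
\begin{equation*}
\mathbb E\|\mathrm H(\mu^n_t)-\mathrm H(\mu_t)\|^2\le 2\sup\|h\|\,L_h\,\mathbb E\,\gamma_2(\mu^n_t,\mu_t).
\end{equation*}
So only $L^1$ convergence of $\gamma_2(\mu^n_t,\mu_t)$ is needed. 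If the stability estimate is only available in the squared form, I would argue by truncation. Conditioning on $\mathcal F_0$, where the initial measures are independent of $W$, the Gronwall bound gives
\begin{equation*}
\mathbb E\bigl[\gamma_2(\mu^n_t,\mu_t)^2\mid\mathcal F_0\bigr]\le e^{Ct}\gamma_2(\mu^n_0,\mu_0)^2.
\end{equation*}
Applying Jensen, $\mathbb E[\gamma_2(\mu^n_t,\mu_t)\mid\mathcal F_0]\le e^{Ct/2}\gamma_2(\mu^n_0,\mu_0)$, and taking expectations gives $\mathbb E\,\gamma_2(\mu^n_t,\mu_t)\to0$. Combined with the bounded-times-Lipschitz inequality above, this yields $\mathrm H(\mu^n_t)\to\mathrm H(\mu_t)$ in $L^2(\Omega;\mathcal H)$.
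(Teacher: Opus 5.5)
Your proposal is correct and follows essentially the same route as the paper: the Lipschitz bound $\|\mathrm H(\mu)-\mathrm H(\nu)\|_{\mathcal H}\le L\gamma_1(\mu,\nu)\le L\gamma_2(\mu,\nu)$, the Wasserstein stability estimate $\mathbb E\,\gamma_2(\mu_t^n,\mu_t)\le C_t\,\mathbb E\,\gamma_2(\mu_0^n,\mu_0)$, and the bound $\mathbb E\|\mathrm H(\mu_t^n)-\mathrm H(\mu_t)\|_{\mathcal H}^2\le 2\sup\|h\|\,\mathbb E\|\mathrm H(\mu_t^n)-\mathrm H(\mu_t)\|_{\mathcal H}$ to pass from $L^1$ to $L^2$. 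The only difference is that the paper takes the stability estimate directly from Lemma 2.4.1 of the cited monograph, whereas you re-derive it by a coupled-flow It\^o--Gronwall argument conditioned on $\mathcal F_0$ followed by Jensen; this works when the Lipschitz condition is in $\gamma_m$ with $m\le 2$, but for $m>2$ your Gronwall argument would only control $\gamma_m$ of the initial data, which the $\gamma_2$ hypothesis does not give.
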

\begin{remark}
Condition of Theorem \ref{convergence thm} holds if, for example, the kernel $\Gamma$ satisfies $\Gamma\in C^2(\mathbb{R}^d)$ and 
\begin{equation}
\sup_{z\in\mathbb R^d}
\left|
\frac{\partial^2\Gamma}{\partial z_i\partial z_j}(z)
\right|
<\infty,
\qquad 1\le i,j\le d.
\end{equation}
\end{remark}
\begin{proof}
By Lemma 2.4.1 in \cite{Dorogovtsev_2024}, for every fixed \(t>0\) there exists a constant \(C_t>0\) such that
\begin{equation}
\mathbb{E}\,\gamma_2(\mu_t^n,\mu_t)
\le
C_t\mathbb{E}\,\gamma_2(\mu_0^n,\mu_0).
\end{equation}

Since \(h:\mathbb R^d\to\mathcal H\) is globally Lipschitz, there exists \(L>0\) such that
\begin{equation}
\|h(x)-h(y)\|_{\mathcal H}
\le
L\|x-y\|,
\qquad x,y\in\mathbb R^d.
\end{equation}

By the definition of \(\mathrm H\), we have
\begin{equation}
\mathrm H(\mu_t^n)-\mathrm H(\mu_t)
=
\int_{\mathbb R^d} h(x)\,\mu_t^n(\mathrm dx)
-
\int_{\mathbb R^d} h(x)\,\mu_t(\mathrm dx).
\end{equation}
then for fixed $\varkappa\in C(\mu^n_t,\mu_t)$
\begin{equation}
\begin{split}
&\|\mathrm H(\mu_t^n)-\mathrm H(\mu_t)\|_{\mathcal H}\\
=&\left\|\int_{\mathbb R^d} h(x)\,\mu_t^n(\mathrm dx)
-
\int_{\mathbb R^d} h(x)\,\mu_t(\mathrm dx)\right\|_{\mathcal H}\\
=&\left\|\int_{\mathbb R^d\times\mathbb{R}^d} (h(x)-h(y))\,\varkappa(\mathrm dx,dy)\right\|_\mathcal H\\
\le&L\int_{\mathbb R^d\times\mathbb{R}^d} \left\|x-y\right\|_\mathcal H\,\varkappa(\mathrm dx,dy).\\
\end{split}
\end{equation}
Since $\varkappa$ is arbitrary, then
\begin{equation}
\|\mathrm H(\mu_t^n)-\mathrm H(\mu_t)\|_{\mathcal H}\le L\,\gamma_1(\mu_t^n,\mu_t)
\end{equation}

Moreover,
\begin{equation}
\gamma_1(\mu_t^n,\mu_t)\le \gamma_2(\mu_t^n,\mu_t).
\end{equation}

Hence,
\begin{equation}\label{ine for H^2}
\mathbb{E}\,\|\mathrm H(\mu_t^n)-\mathrm H(\mu_t)\|_{\mathcal H}
\le
L C_t\mathbb{E}\,\gamma_2(\mu_0^n,\mu_0).
\end{equation}

Since \(\mathbb{E}\,\gamma_2(\mu_0^n,\mu_0)\to0\), combining estimation (\ref{ine for H^2}), we deduce
\begin{equation}
\mathbb E\|\mathrm H(\mu_t^n)-\mathrm H(\mu_t)\|_{\mathcal H}^2
\le 2\sup_{x\in\mathbb{R}^d}\|h(x)\|\cdot\mathbb E\|\mathrm H(\mu_t^n)-\mathrm H(\mu_t)\|_{\mathcal H}
\to 0,\text{\quad as }n\to+\infty
\end{equation}
namely
\begin{equation}
\mathrm H(\mu_t^n)\to \mathrm H(\mu_t)
\quad \text{in } L^2(\Omega;\mathcal H),\text{\quad as }n\to+\infty
\end{equation}

This completes the proof.
\end{proof}

\section{$h_t$ as a Markov process}\label{sec:markov}
In and after this section, we consider the Hilbert space $\mathcal{H}=L_2(\mathbb{R}^d)$ and denote $G:=\mathrm{H}(\mathfrak{M}_m)$. We also consider the Gaussian kernel $\Gamma(\cdot)=\exp\{-\|\cdot\|^2\}$. For simplicity, we define
\begin{equation}
    h_t:=\mathrm{H}(\mu_t)=\int_{\mathbb{R}^d}h(u)\mu_t(\mathrm{d}u).
\end{equation}

\subsection{Markov property}

\begin{theorem}\label{Markovian}
Assume the conditions of Theorem~\ref{existence} hold. Then $\{h_t\}$ is a Markov process in $G$ relative to the flow $\{\mathcal F_t\}$ generated by the Wiener sheet.
\end{theorem}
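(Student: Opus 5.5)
The plan is to first establish the Markov property for the measure-valued process $(\mu_t)$ on $\mathfrak M_m$, and then carry it over to $h_t=\mathrm H(\mu_t)$ using the injectivity of $\mathrm H$. For the Gaussian kernel $\Gamma(z)=\exp\{-\|z\|^2\}$ we have $\hat\Gamma(\lambda)=c\exp\{-\|\lambda\|^2/4\}\neq0$. By Lemma~\ref{separates}, $\mathrm H$ is therefore a bijection from $\mathfrak M_m$ onto $G$, with inverse $\mu(\cdot)$. In addition, $\mathrm H$ is continuous (by the continuity lemma) and, being injective, is a Borel isomorphism onto its image $G$ by the Lusin--Souslin theorem: $\mathfrak M_m$ with the $\gamma_m$ topology is Polish, and $\mathrm H$ is still continuous for the finer $\gamma_m$ topology. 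Hence $\sigma(h_s)=\sigma(\mu_s)$, and any bounded measurable $\Phi$ on $G$ equals $\Phi\circ\mathrm H$ composed with a measurable function of $\mu$. It therefore suffices to show
\[
\mathbb E\bigl[F(\mu_t)\mid\mathcal F_s\bigr]=\mathbb E\bigl[F(\mu_t)\mid\mu_s\bigr],\qquad s\le t,
\]
for bounded measurable $F$ on $\mathfrak M_m$.

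For the measure-valued Markov property I will use the flow (cocycle) structure of equation~\eqref{eq:interacting}. Fix $s$ and a deterministic $\nu\in\mathfrak M_m$. Let $x_{s,\nu}(v,t)$, $t\ge s$, solve the equation started at time $s$: $x_{s,\nu}(v,s)=v$, with driving measure $\nu_{s,t}=\nu\circ x_{s,\nu}(\cdot,t)^{-1}$ and the shifted Wiener sheet increments $W(\mathrm dr,\mathrm dq)$, $r\ge s$. Theorem~\ref{existence} applies on $[s,\infty)$, because the Lipschitz condition is uniform in time and the coefficients are continuous. It gives a unique solution, which is a measurable functional $\Psi_{s,t}(\nu,\,W|_{[s,t]})$ of the initial measure and the increments of $W$ after $s$. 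Moreover, these increments are independent of $\mathcal F_s$.

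Next I will verify the composition identity $\mu_t=\Psi_{s,t}(\mu_s,W|_{[s,t]})$. Set $y(v,t)$ to be the solution started at $s$ from $\mu_s$. Then $\tilde x(u,t)=y(x(u,s),t)$ solves~\eqref{eq:interacting} on $[s,t]$ with measure $\mu_0\circ\tilde x(\cdot,t)^{-1}=\mu_s\circ y(\cdot,t)^{-1}$, so uniqueness in Theorem~\ref{existence} gives $\tilde x=x$. This step needs care because $\mu_s$ is random and $\mathcal F_s$-measurable. I would handle it by solving with a random $\mathcal F_s$-measurable initial measure, which the existence theory in~\cite{Dorogovtsev_2024} permits, and by ensuring joint measurability of $\Psi_{s,t}$ in $(\nu,\omega)$. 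That measurability follows from the continuous dependence of the solution on the initial measure, e.g. the estimate $\mathbb E\gamma_2(\mu_t^n,\mu_t)\le C_t\mathbb E\gamma_2(\mu_0^n,\mu_0)$ used in Theorem~\ref{convergence thm}, which allows approximation by measures taking countably many values.

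With the identity in hand, the standard freezing lemma finishes the argument. Since $\mu_s$ is $\mathcal F_s$-measurable and $W|_{[s,t]}$ is independent of $\mathcal F_s$,
\[
\mathbb E\bigl[F(\mu_t)\mid\mathcal F_s\bigr]=P_{s,t}F(\mu_s),\qquad P_{s,t}F(\nu):=\mathbb E\,F\bigl(\Psi_{s,t}(\nu,W|_{[s,t]})\bigr).
\]
The right-hand side depends only on $\mu_s$, so it also equals $\mathbb E[F(\mu_t)\mid\mu_s]$. Transporting this through $\mathrm H$ shows that $h_t$ is Markov with transition kernels $Q_{s,t}\Phi(h)=P_{s,t}(\Phi\circ\mathrm H)(\mu(h))$. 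The main obstacle is the previous step: making the flow/cocycle identity rigorous with a random initial measure, together with the measurability of $\nu\mapsto\Psi_{s,t}(\nu,\cdot)$. I would resolve it by first treating deterministic $\nu$, then simple (countably-valued) $\mathcal F_s$-measurable $\mu_s$, and finally passing to the limit via continuity in $\gamma_2$.
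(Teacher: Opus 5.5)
Your proof is correct. Its outer structure (Markov property of $(\mu_t)$ on $\mathfrak M_m$, then transfer through the injective lift $\mathrm H$) is the same as the paper's, but you supply both ingredients differently.

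The paper simply cites the Markov property of $(\mu_t)$ from~\cite{Dorogovtsev_2024}. You instead derive it from the cocycle identity $\mu_t=\Psi_{s,t}(\mu_s,W|_{[s,t]})$ and the freezing lemma. This costs the technical work with a random $\mathcal F_s$-measurable initial measure, which you correctly flag and handle by countably-valued approximation plus stability in $\gamma_2$. In return, the argument is self-contained.

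For measurability of the kernel in the $\mathcal H$-variable, the paper asserts that $h_0\mapsto\int f(\nu)\,K_t(h_0,\mathrm d\nu)$ is \emph{continuous}, via Theorem~\ref{convergence thm}. That would give a Feller property, but it needs $h_0^n\to h_0$ in $\mathcal H$ to imply $\gamma_2(\mu_0^n,\mu_0)\to0$. On the non-compact set $G=\mathrm H(\mathfrak M_m)$ this implication fails: for $\mu^n=(1-\frac1n)\delta_0+\frac1n\delta_n$ one has $\|\mathrm H(\mu^n)-\mathrm H(\delta_0)\|_{\mathcal H}\le 2/n$, while $\gamma_2(\mu^n,\delta_0)=\sqrt n$. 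Your Lusin--Souslin argument gives only Borel measurability of $\mu(\cdot)$ on $G$. That is exactly what the Markov property requires, so your route is the sounder one, and it is what the paper itself invokes later in Section~\ref{generator}.

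Finally, your two-parameter kernels $P_{s,t}$ are the correct objects when the coefficients in Theorem~\ref{existence} depend on $t$. The paper's one-parameter kernel $K_t$ and its appeal to a \emph{homogeneous} Markov process are justified only for time-independent coefficients.
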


\begin{proof}
Define the stochastic kernel $K_t(\cdot,\cdot):G\times\mathcal{B(H)}\to \mathbb{R}$ by
\begin{equation}
K_t(h_0,\Delta)=\mathbb{P}\left\{h_t\in\Delta\right\}=\mathbb{P}\left\{\int_{\mathbb{R}^d}h(u)\mathrm{d}\mu_{t}\in\Delta\right\},\Delta\in\mathcal{B}(\mathcal{H}).
\end{equation}
where $\{\mu_t;t\ge0\}$ is the measure-valued process obtained from the solution of equation (\ref{eq:interacting}) with the initial condition $\mu_0$.

Let us prove $K_t(\cdot,\cdot)$ is the stochastic kernel on $\mathcal{H}$. First $K_t(h_0,\cdot)$ is the distribution of $h_t$, so it's a probability measure. Then we will check the measurability of $K_t$ with respect to $h_0$.

For every bounded continuous function $f:\,\mathcal{H}\to\mathbb{R}$, the integral
\begin{equation}
\int_{\mathcal{H}}f(\nu)K_t(h_0,\mathrm{d}\nu)
\end{equation}
is the continuous function of $h_0$ since for any sequence \(h_0^n\to h_0\) in \(\mathcal H\), the corresponding solutions satisfy
\begin{equation}
h_t^n \to h_t
\qquad \text{in } L^2(\Omega;\mathcal H),
\end{equation}
by Theorem~\ref{convergence thm}. Hence,
\begin{equation}
f(h_t^n)\to f(h_t)
\qquad \text{in probability},
\end{equation}
for every bounded continuous function \(f:\mathcal H\to\mathbb R\). Since \(f\) is bounded, the dominated convergence theorem yields
\begin{equation}
\mathbb E f(h_t^n)\to \mathbb E f(h_t).
\end{equation}
Therefore,
\begin{equation}
\int_{\mathcal H} f(\nu)\,K_t(h_0^n,d\nu)
\to
\int_{\mathcal H} f(\nu)\,K_t(h_0,d\nu),
\end{equation}
which proves that
\begin{equation}
h_0\mapsto \int_{\mathcal H} f(\nu)\,K_t(h_0,d\nu)
\end{equation}
is a continuous mapping from $\mathcal{H}$ to $\mathbb{R}$.

Next, we verify the Markov property. Let $s,t\ge0$ and $\Delta\in\mathcal{B}(\mathcal{H})$ be arbitrary. We aim to show
\begin{equation}
\mathbb{P}\{h_{t+s}\in\Delta \mid \mathcal{F}_t\}
=
K_s(h_t,\Delta).
\end{equation}
Since $\{\mu_t\}_{t\ge0}$ is a homogeneous Markov process in $\mathfrak{M}_m$ relative to the $\mathcal{F}_t$ \cite{Dorogovtsev_2024} and $\mathrm H$ is the injection,
\begin{equation}
\mathbb P\{h_{t+s}\in\Delta\mid\mathcal F_t\}
=
\mathbb P\{\mu_{t+s}\in H^{-1}(\Delta)\mid\mathcal F_t\} 
=
P_s(\mu_t,\mathrm{H}^{-1}(\Delta))=K_s(h_t,\Delta)
\end{equation}
where \(P_s\) denotes the transition kernel of \(\{\mu_t\}_{t\ge0}\).

Thus \(\{h_t\}_{t\ge0}\) is a Markov process.

\end{proof}
\subsection{Generator}\label{generator}
Let $\{e_k\}_{k\ge1}$ be an orthonormal basis of $L^2(\mathbb R^d)$ and set
\begin{equation}\label{eq:beta_k_def}
\beta_t^k:=\int_0^t\int_{\mathbb R^d} e_k(q)\,W(ds,dq),\qquad k\ge1 .
\end{equation}
Since $\langle e_j,e_k\rangle_{L^2}=\delta_{jk}$, $(\beta^k)_{k\ge1}$ are
independent standard Brownian motions. Let $V$ be a separable Hilbert space
($V=\mathbb R^d$ or $V=\mathcal H$ below). For $f\in L^2(\mathbb R^d;V)$
define $\langle f,e_k\rangle:=\int_{\mathbb R^d} f(q)\,e_k(q)\,dq\in V$ and
\begin{equation}\label{eq:noise_expansion}
\int_0^t\!\!\int_{\mathbb R^d} f(q)\,W(ds,dq)
:=\sum_{k=1}^{\infty}\int_0^t \langle f,e_k\rangle\,d\beta_s^k ;
\end{equation}
Parseval's identity gives
\begin{equation}\label{eq:Parseval}
\sum_{k\ge1}\langle f,e_k\rangle\otimes\langle g,e_k\rangle
=\int_{\mathbb R^d} f(q)\otimes g(q)\,dq,
\qquad f,g\in L^2(\mathbb R^d;V).
\end{equation}
Set, for $y\in\mathbb R^d$, $\mu\in\mathcal P(\mathbb R^d)$, $t\ge0$,
\begin{equation}\label{eq:b_k_def}
b_k(y,\mu,t):=\int_{\mathbb R^d} b(y,\mu,t,q)\,e_k(q)\,dq .
\end{equation}
%% NEW: b_k and the ONB are now defined BEFORE they are used.
By \eqref{eq:noise_expansion} the equation with interaction
\eqref{eq:interacting} is equivalent to
\begin{equation}\label{eq:SDE_component}
dx(u,t)=a\bigl(x(u,t),\mu_t,t\bigr)\,dt
+\sum_{k=1}^{\infty} b_k\bigl(x(u,t),\mu_t,t\bigr)\,d\beta_t^k ,
\end{equation}
and by \eqref{eq:Parseval}
\begin{equation}\label{eq:quadratic_variation_identity}
\sum_{k\ge1} b_k(y,\mu,t)\otimes b_k(y,\mu,t)
=\int_{\mathbb R^d} b(y,\mu,t,q)\otimes b(y,\mu,t,q)\,dq .
\end{equation}

For $\mu\in\mathcal P(\mathbb R^d)$ and $t\ge0$ define the elements of
$\mathcal H$
\begin{align}
A_t(\mu)&:=\int_{\mathbb R^d} Dh(y)\,a(y,\mu,t)\,\mu(dy),
\label{eq:A_def}\\
G_t(\mu)&:=\frac12\int_{\mathbb R^d}\!\!\int_{\mathbb R^d}
D^2h(y)\bigl(b(y,\mu,t,q),b(y,\mu,t,q)\bigr)\,dq\,\mu(dy),
\label{eq:G_t_def}\\
\Sigma_{t,k}(\mu)&:=\int_{\mathbb R^d} Dh(y)\,b_k(y,\mu,t)\,\mu(dy),
\qquad k\ge1 .
\label{eq:Sigma_def}
\end{align}

\paragraph{Assumption G.}
\begin{enumerate}
\item[\textup{(G1)}] $h\in C_b^2(\mathbb R^d;\mathcal H)$;
\item[\textup{(G2)}] $a$ is bounded, and $b$ is bounded with
$C_b:=\sup_{y,\mu,t}\int_{\mathbb R^d}\|b(y,\mu,t,q)\|^2\,dq<\infty$;
\item[\textup{(G3)}] the flow solving \eqref{eq:interacting}
exists, $\mu_t=\mu_0\circ x(\cdot,t)^{-1}$ is a Markov process, and
$h_t=\mathcal H(\mu_t)$ is Markov (Theorem~\ref{Markovian});
moreover the kernel $h$ is characteristic, i.e.\ $\mathcal H$ is injective
on $\mathcal P(\mathbb R^d)$ (Lemma~\ref{separates});

\item[\textup{(G4)}] $a$ and $b$ do not depend on $t$; $a$ is bounded and
continuous in $(y,\mu)$ (weak topology on $\mathcal P(\mathbb R^d)$), and
$(y,\mu)\mapsto b(y,\mu,\cdot)$ is continuous in $L^2(\mathbb R^d,dq)$,
uniformly in $y$.
\end{enumerate}

For $h\in\mathcal H(\mathcal P(\mathbb R^d))$ denote by $\mu(h)$ the unique
probability measure with $\mathcal H(\mu(h))=h$; it is well defined by
injectivity of $\mathcal H$. Since $\mathcal H$ is an injective Borel map
between the standard Borel spaces $\mathcal P(\mathbb R^d)$ and $\mathcal H$,
the Lusin--Souslin theorem implies that its range
$E:=\mathcal H(\mathcal P(\mathbb R^d))$ is Borel and that $\mu(\cdot)$ is
Borel measurable on $E$.

\begin{theorem}\label{thm:generator}
\leavevmode
\begin{enumerate}
\item[\textup{(a)}] Under \textup{(G1)--(G3)}, for every
$\Phi\in C_b^2(\mathcal H)$ the process
\begin{equation}\label{eq:martingale_form}
\Phi(h_t)-\Phi(h_0)-\int_0^t (\mathcal L_s\Phi)(h_s)\,ds,
\qquad t\ge0,
\end{equation}
is a square-integrable martingale with respect to $(\mathcal F_t)$, where,
for $h\in E$,
\begin{equation}\label{eq:generator_component}
(\mathcal L_s\Phi)(h)
=D\Phi(h)\bigl[A_s(\mu(h))+G_s(\mu(h))\bigr]
+\frac12\sum_{k=1}^{\infty}D^2\Phi(h)
\bigl(\Sigma_{s,k}(\mu(h)),\Sigma_{s,k}(\mu(h))\bigr).
\end{equation}
%% CHANGED: the drift now contains G_s -- the Ito correction of the map h.
%% This term was missing in the previous version of the statement.
\item[\textup{(b)}] If moreover \textup{(G4)} holds, then
$\mathcal L:=\mathcal L_s$ is the infinitesimal generator of the transition
semigroup $(P_t)$ of $(h_t)$ on $C_b^2(\mathcal H)$:
\begin{equation}\label{eq:generator_limit}
(\mathcal L\Phi)(h)=\lim_{t\downarrow0}\frac{P_t\Phi(h)-\Phi(h)}{t},
\qquad h\in E .
\end{equation}
%% CHANGED: time-inhomogeneous coefficients only give the martingale-problem
%% form (a); the single generator (b) is claimed under time-homogeneity (G4).
\end{enumerate}
\end{theorem}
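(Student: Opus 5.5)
The plan is a two-level Itô computation. First apply Itô's formula to $h$ along each trajectory, then integrate the result against $\mu_0$ to obtain an $\mathcal H$-valued semimartingale decomposition of $h_t$. Finally apply the infinite-dimensional Itô formula to $\Phi(h_t)$.

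\textbf{Step 1: Itô for a single particle.} Fix $u$. By (G1) and the componentwise form \eqref{eq:SDE_component}, Itô's formula for the $C^2_b$ map $h:\mathbb R^d\to\mathcal H$ gives
\begin{equation*}
h(x(u,t))=h(u)+\int_0^t\Bigl[Dh\,a+\tfrac12\sum_k D^2h(b_k,b_k)\Bigr](x(u,s),\mu_s,s)\,ds+\sum_k\int_0^t Dh\,b_k\,d\beta^k_s .
\end{equation*}
By \eqref{eq:quadratic_variation_identity}, the correction term equals $\frac12\int D^2h(b,b)\,dq$. Both drift terms are bounded by (G2), and the stochastic integrand has Hilbert--Schmidt norm at most $\|Dh\|_\infty C_b^{1/2}$.

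\textbf{Step 2: integrate against $\mu_0$.} Use $h_t=\int h(x(u,t))\,\mu_0(du)$. Interchange $\mu_0(du)$ with $ds$ (Fubini) and with $d\beta^k$ (stochastic Fubini). The stochastic Fubini step needs a jointly measurable version of the flow $x(u,t)$, which the existence theory of Theorem~\ref{existence} provides, together with the uniform $L^2$ bounds from Step 1. Since $\mu_s=\mu_0\circ x(\cdot,s)^{-1}$, every integral $\int F(x(u,s))\,\mu_0(du)$ becomes $\int F(y)\,\mu_s(dy)$. This yields
\begin{equation*}
h_t=h_0+\int_0^t\bigl[A_s(\mu_s)+G_s(\mu_s)\bigr]ds+\sum_k\int_0^t\Sigma_{s,k}(\mu_s)\,d\beta^k_s ,
\end{equation*}
where $\sum_k\|\Sigma_{s,k}\|^2\le\|Dh\|_\infty^2C_b$ by Jensen and Parseval.

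\textbf{Step 3: Itô for $\Phi(h_t)$.} Apply the Itô formula in the Hilbert space $\mathcal H$ to $\Phi\in C^2_b(\mathcal H)$ and substitute $\mu_s=\mu(h_s)$, which is valid by injectivity (G3). The result is exactly \eqref{eq:martingale_form}, with martingale part $\sum_k\int_0^t D\Phi(h_s)[\Sigma_{s,k}]\,d\beta^k_s$. This martingale is square integrable because its quadratic variation is at most $\|D\Phi\|_\infty^2\|Dh\|_\infty^2C_b\,t$. That proves (a).

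\textbf{Part (b).} Take expectations in \eqref{eq:martingale_form} under the start $h_0=h\in E$:
\begin{equation*}
\frac{P_t\Phi(h)-\Phi(h)}{t}=\frac1t\int_0^t\mathbb E(\mathcal L\Phi)(h_s)\,ds .
\end{equation*}
It then suffices to show $\mathbb E(\mathcal L\Phi)(h_s)\to(\mathcal L\Phi)(h)$ as $s\downarrow0$, and I expect this to be the main obstacle. First, $\mathcal L\Phi$ is bounded by (G1), (G2) and the bounds above. Second, $s\mapsto\mathcal L\Phi(h_s)$ should be continuous in probability at $0$. For this I would argue as follows.
\begin{itemize}
\item Each $x(u,s)\to u$ in $L^2$ uniformly in $u$, because the coefficients are bounded. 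Hence $\gamma_2(\mu_s,\mu_0)\to0$ in probability, and so $\mu_s\Rightarrow\mu_0$ and $h_s\to h$.
\item By (G4), $a(y,\mu_s)\to a(y,\mu_0)$ for each $y$, and $b(y,\mu_s,\cdot)\to b(y,\mu_0,\cdot)$ in $L^2(dq)$ uniformly in $y$.
\item Combining this with continuity of $Dh$ and $D^2h$ and weak convergence of $\mu_s$, one gets $A(\mu_s)\to A(\mu_0)$ and $G(\mu_s)\to G(\mu_0)$ in $\mathcal H$. The only delicate point is the product $a(y,\mu_s)\,\mu_s(dy)$; I would handle it by splitting it into $[a(y,\mu_s)-a(y,\mu_0)]$ plus the weak-convergence term. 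If needed I would use uniform continuity of $a$ on compact sets of $y$, together with the tightness of $\{\mu_s\}$.
\item Similarly, $\sum_k D^2\Phi(\Sigma_k,\Sigma_k)$ is continuous, via the Hilbert--Schmidt convergence $\sum_k\|\Sigma_k(\mu_s)-\Sigma_k(\mu_0)\|^2\to0$, which follows from the $L^2(dq)$ continuity of $b$ and Parseval.
\end{itemize}
Continuity of $D\Phi$ and $D^2\Phi$ then lets bounded convergence finish the argument.
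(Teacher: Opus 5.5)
Your proposal is correct and follows the paper's proof essentially step for step: It\^o's formula for $h(x(u,t))$, then stochastic Fubini and the push-forward $\mu_s=\mu_0\circ x(\cdot,s)^{-1}$ to get the $\mathcal H$-valued SDE with drift $A_s+G_s$, then the Hilbert-space It\^o formula for $\Phi(h_t)$ with injectivity giving $\mu_s=\mu(h_s)$, and for (b) the identity $P_t\Phi(h)-\Phi(h)=\int_0^t P_s(\mathcal L\Phi)(h)\,ds$ plus right-continuity at $s=0$. Your handling of that last point is, if anything, more careful than the paper's appeal to path continuity of $h_t$ and termwise convergence of each $\Sigma_k$ with only a uniform bound on $\sum_k\|\Sigma_k\|^2$: you derive $\mu_s\Rightarrow\mu_0$ from $\mathbb E\int\|x(u,s)-u\|^2\mu_0(du)\to0$, and you control the diffusion term through $\sum_k\|\Sigma_k(\mu_s)-\Sigma_k(\mu_0)\|^2\to0$ via Parseval.
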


\begin{proof}

\emph{Step 1 (It\^o formula for $h(x(u,t))$).}
Fix $u\in\mathbb R^d$. Since $h\in C_b^2(\mathbb R^d;\mathcal H)$ by
\textup{(G1)} and $x(u,t)$ is the It\^o process
\eqref{eq:SDE_component}, the It\^o formula yields
\begin{equation}\label{eq:Ito_component_expanded}
\begin{aligned}
d\,h(x(u,t))={}&Dh(x(u,t))\,a(x(u,t),\mu_t,t)\,dt
+\sum_{k=1}^{\infty}Dh(x(u,t))\,b_k(x(u,t),\mu_t,t)\,d\beta_t^k\\
&+\frac12\sum_{k=1}^{\infty}D^2h(x(u,t))
\bigl(b_k(x(u,t),\mu_t,t),\,b_k(x(u,t),\mu_t,t)\bigr)\,dt .
\end{aligned}
\end{equation}
Both series converge in $L^2(\Omega;\mathcal H)$, locally uniformly in $t$.
Indeed, by \eqref{eq:quadratic_variation_identity}, \textup{(G1)} and
\textup{(G2)},
\[
\sum_{k\ge1}\|Dh(y)\,b_k(y,\mu,t)\|^2
\le\|Dh\|_{\infty}^2\sum_{k\ge1}\|b_k(y,\mu,t)\|^2
=\|Dh\|_{\infty}^2\int_{\mathbb R^d}\|b(y,\mu,t,q)\|^2\,dq
\le\|Dh\|_{\infty}^2 C_b,
\]
and similarly
$\sum_{k\ge1}\|D^2h(y)(b_k,b_k)\|
\le\|D^2h\|_{\infty}\sum_{k\ge1}\|b_k(y,\mu,t)\|^2
\le\|D^2h\|_{\infty}C_b$.
%% NEW: the previously unproven convergence assertion is now justified.

\emph{Step 2 (averaging over the labels).}
Integrating \eqref{eq:Ito_component_expanded} with respect to $\mu_0(du)$,
the $dt$-terms are Bochner integrals and the summation may be interchanged
with $\mu_0(du)$ by dominated convergence, using the uniform bounds above.
For the stochastic part we invoke the stochastic Fubini theorem;
its square-integrability condition
\[
\int_{\mathbb R^d}\mathbb E\int_0^T
\sum_{k\ge1}\bigl\|Dh(x(u,t))\,b_k(x(u,t),\mu_t,t)\bigr\|^2\,dt\,\mu_0(du)
<\infty
\]
holds by the same estimate.
%% NEW: the use of stochastic Fubini is now explicit and justified.
The change of variables
$\int F(x(u,t))\,\mu_0(du)=\int F(y)\,\mu_t(dy)$ then gives the
$\mathcal H$-valued SDE
\begin{equation}\label{eq:ht_dynamics}
dh_t=\bigl[A_t(\mu_t)+G_t(\mu_t)\bigr]\,dt
+\sum_{k=1}^{\infty}\Sigma_{t,k}(\mu_t)\,d\beta_t^k .
\end{equation}
Note that $G_t$ is precisely the It\^o correction of the kernel map $h$: it
belongs to the \emph{drift} of $h_t$ and is kept in
\eqref{eq:ht_dynamics}.
%% CHANGED: this is the key point -- in the previous version the drift
%% carried G_t, but G_t was silently dropped when the Ito formula for
%% Phi(h_t) was applied two steps later.

The martingale part of \eqref{eq:ht_dynamics} admits the equivalent
white-noise form
\begin{equation}\label{eq:ht_noise_form}
\sum_{k=1}^{\infty}\Sigma_{t,k}(\mu_t)\,d\beta_t^k
=\int_{\mathbb R^d}\!\!\int_{\mathbb R^d}
Dh(y)\,b(y,\mu_t,t,q)\,\mu_t(dy)\,W(dt,dq),
\end{equation}
because the integrand
$f_t(q):=\int_{\mathbb R^d} Dh(y)\,b(y,\mu_t,t,q)\,\mu_t(dy)$ belongs to
$L^2(\mathbb R^d;\mathcal H)$:
\[
\int_{\mathbb R^d}\|f_t(q)\|^2\,dq
\le\int_{\mathbb R^d}\!\!\int_{\mathbb R^d}
\|Dh(y)\,b(y,\mu_t,t,q)\|^2\,\mu_t(dy)\,dq
\le\|Dh\|_{\infty}^2 C_b<\infty
\]
by Jensen's inequality ($\mu_t$ is a probability measure), and
$\langle f_t,e_k\rangle=\Sigma_{t,k}(\mu_t)$. Consequently
\begin{equation}\label{eq:Sigma_summability}
\sum_{k\ge1}\|\Sigma_{t,k}(\mu)\|^2
=\int_{\mathbb R^d}\|f_t(q)\|^2\,dq
\le\|Dh\|_{\infty}^2 C_b<\infty ,
\end{equation}
so the series in \eqref{eq:ht_dynamics} converges in
$L^2(\Omega;\mathcal H)$ and the quadratic variation of $h_t$ is
\begin{equation}\label{eq:ht_quadratic_variation}
d\langle h\rangle_t
=\sum_{k=1}^{\infty}\Sigma_{t,k}(\mu_t)\otimes\Sigma_{t,k}(\mu_t)\,dt .
\end{equation}

\emph{Step 3 (It\^o formula for $\Phi(h_t)$).}
Let $\Phi\in C_b^2(\mathcal H)$. Applying the It\^o formula to $\Phi$ at the It\^o process
\eqref{eq:ht_dynamics}, and using
\eqref{eq:ht_quadratic_variation}, we obtain
\begin{equation}\label{eq:Ito_Phi_expanded}
\begin{aligned}
d\,\Phi(h_t)={}&
D\Phi(h_t)\bigl[A_t(\mu_t)+G_t(\mu_t)\bigr]\,dt
+\sum_{k=1}^{\infty}D\Phi(h_t)\,\Sigma_{t,k}(\mu_t)\,d\beta_t^k\\
&+\frac12\sum_{k=1}^{\infty}D^2\Phi(h_t)
\bigl(\Sigma_{t,k}(\mu_t),\Sigma_{t,k}(\mu_t)\bigr)\,dt .
\end{aligned}
\end{equation}
%% CHANGED: D\Phi(h_t) now acts on the FULL drift A_t + G_t, consistently
%% with \eqref{eq:ht_dynamics}. Previously only A_t survived here.
All series converge in $L^2(\Omega)$: for the martingale part,
\[
\sum_{k\ge1}\mathbb E\int_0^T
\bigl|D\Phi(h_t)\,\Sigma_{t,k}(\mu_t)\bigr|^2\,dt
\le\|D\Phi\|_{\infty}^2
\int_0^T\sum_{k\ge1}\|\Sigma_{t,k}(\mu_t)\|^2\,dt
\le T\,\|D\Phi\|_{\infty}^2\|Dh\|_{\infty}^2C_b
\]
by \eqref{eq:Sigma_summability}, and for the second-order term,
\[
\sum_{k\ge1}\bigl|D^2\Phi(h_t)(\Sigma_{t,k},\Sigma_{t,k})\bigr|
\le\|D^2\Phi\|_{\infty}\sum_{k\ge1}\|\Sigma_{t,k}\|^2
\le\|D^2\Phi\|_{\infty}\|Dh\|_{\infty}^2 C_b<\infty .
\]
Since $D\Phi$ is bounded, the stochastic integral in
\eqref{eq:Ito_Phi_expanded} is a square-integrable martingale, not merely a
local one.

By \textup{(G3)} the embedding $\mathcal H$ is injective, hence
$\mu_t=\mu(h_t)$ pathwise, and the bounded drift integrand in
\eqref{eq:Ito_Phi_expanded} equals $(\mathcal L_t\Phi)(h_t)$. Integrating
\eqref{eq:Ito_Phi_expanded} from $0$ to $t$ proves part (a):
\begin{equation}\label{eq:Dynkin}
\Phi(h_t)=\Phi(h_0)+\int_0^t(\mathcal L_s\Phi)(h_s)\,ds+M_t^{\Phi},
\end{equation}
where $M^{\Phi}$ is a square-integrable martingale.

\emph{Step 4 (identification of the generator).}
Assume \textup{(G4)}, so the coefficients are time-homogeneous and $(h_t)$
is a time-homogeneous Markov process with transition semigroup
$P_t\Phi(h)=\mathbb E[\Phi(h_t)\mid h_0=h]$ (Theorem~\ref{Markovian}).
Taking the conditional expectation in \eqref{eq:Dynkin} with respect to
$\sigma(h_0)$ and using the Markov property together with Fubini's theorem,
\begin{equation}\label{eq:semigroup_integral}
P_t\Phi(h)-\Phi(h)=\int_0^t P_s(\mathcal L\Phi)(h)\,ds .
\end{equation}
By \textup{(G1)}, \textup{(G2)} and the estimates of Step 3,
\[
\|\mathcal L\Phi\|_{\infty}
\le\|D\Phi\|_{\infty}
\Bigl(\|Dh\|_{\infty}\|a\|_{\infty}
+\frac12\|D^2h\|_{\infty}C_b\Bigr)
+\frac12\|D^2\Phi\|_{\infty}\|Dh\|_{\infty}^2C_b=:C_{\Phi}<\infty .
\]
It remains to show that $s\mapsto P_s(\mathcal L\Phi)(h)$ is
right-continuous at $s=0$. The paths of $(h_t)$ are continuous, being the
sum of a bounded-variation part (bounded drift in \eqref{eq:ht_dynamics})
and a continuous martingale; hence $h_s\to h$ $\mathbb P_h$-a.s. Moreover
$\mu_s\to\mu(h)$ weakly, so by the bounded continuity of the coefficients
postulated in \textup{(G4)} and dominated convergence,
$A(\mu_s)\to A(\mu(h))$, $G(\mu_s)\to G(\mu(h))$ and
$\Sigma_k(\mu_s)\to\Sigma_k(\mu(h))$ in $\mathcal H$, for each $k$. Using
the uniform bound \eqref{eq:Sigma_summability} to dominate the tail of the
series, it follows that $(\mathcal L\Phi)(h_s)\to(\mathcal L\Phi)(h)$
$\mathbb P_h$-a.s.; since $\mathcal L\Phi$ is bounded, dominated convergence
gives $P_s(\mathcal L\Phi)(h)\to(\mathcal L\Phi)(h)$. Dividing
\eqref{eq:semigroup_integral} by $t$ and letting $t\downarrow0$ yields
\eqref{eq:generator_limit}.
\end{proof}

\subsection{Semigroup related to equation with interaction}

Throughout this subsection, the coefficients are assumed to be time-independent.
The equation with interaction is therefore written as
\begin{equation}
dx(u,t)
=
a(x(u,t),\mu_t)\,dt
+
\sum_{k=1}^{\infty}
b_k(x(u,t),\mu_t)\,d\beta_t^k,
\qquad
\mu_t=\mu_0\circ x(\cdot,t)^{-1}.
\end{equation}

Let
\begin{equation}
U=L^2(\mathbb R^d,dq),
\end{equation}
and let \(\{e_k\}_{k\geq1}\) be an orthonormal basis of \(U\). As before,
\begin{equation}
b_k(x,\mu)
=
\int_{\mathbb R^d}
b(x,\mu,q)e_k(q)\,dq.
\end{equation}

\begin{assumption}\label{ass:semigroup_basic}
Let \(K\subset\mathfrak M_2\) be compact with respect to the Wasserstein metric
\(\gamma_2\). Assume that the following conditions hold.

\begin{enumerate}
\item The set \(K\) is invariant under the measure-valued dynamics. Namely, for every
\(\mu_0\in K\),
\begin{equation}
\mu_t\in K
\qquad
\text{a.s. for every }t\geq0.
\end{equation}

\item The coefficients \(a\) and \(b\) are time-independent and satisfy the global
Lipschitz condition
\begin{equation}
\begin{split}
&
\|a(x,\mu)-a(y,\nu)\|
+
\left(
\int_{\mathbb R^d}
\|b(x,\mu,q)-b(y,\nu,q)\|^2\,dq
\right)^{1/2}
\leq
C\bigl(
\|x-y\|+\gamma_2(\mu,\nu)
\bigr),
\end{split}
\end{equation}
for all \(x,y\in\mathbb R^d\) and \(\mu,\nu\in\mathfrak M_2\).

\end{enumerate}
\end{assumption}

Set
\begin{equation}
G:=H(K)\subset\mathcal H.
\end{equation}
Since \(K\) is compact and \(H\) is continuous and injective, \(G\) is compact and
\begin{equation}
H|_K:K\longrightarrow G
\end{equation}
is a homeomorphism. In particular, \(H^{-1}:G\to K\) is continuous.

For \(\xi\in G\), let
\begin{equation}
\mu=H^{-1}(\xi).
\end{equation}
Define
\begin{equation}
\begin{split}
A(\xi)
=
\int_{\mathbb R^d}
Dh(y)a(y,\mu)\,\mu(dy)
+
\frac12
\int_{\mathbb R^d}
\int_{\mathbb R^d}
D^2h(y)
\bigl[
b(y,\mu,q),b(y,\mu,q)
\bigr]
\,dq\,\mu(dy),
\end{split}
\end{equation}
and
\begin{equation}
\Sigma_k(\xi)
=
\int_{\mathbb R^d}
Dh(y)b_k(y,\mu)\,\mu(dy),
\qquad k\geq1.
\end{equation}

Equivalently, define the operator
\(\mathscr S(\xi)\in\mathcal L_2(U,\mathcal H)\) by
\begin{equation}
\mathscr S(\xi)f
=
\int_{\mathbb R^d}
Dh(y)
\left[
\int_{\mathbb R^d}
b(y,\mu,q)f(q)\,dq
\right]
\mu(dy),
\qquad f\in U.
\end{equation}
Then
\begin{equation}
\mathscr S(\xi)e_k=\Sigma_k(\xi).
\end{equation}

For \(\Phi\in C_b^2(\mathcal H)\), define
\begin{equation}
(\widetilde{\mathcal L}_0\Phi)(\xi)
=
D\Phi(\xi)[A(\xi)]
+
\frac12
\sum_{k=1}^{\infty}
D^2\Phi(\xi)
\bigl[
\Sigma_k(\xi),\Sigma_k(\xi)
\bigr],
\qquad
\xi\in G.
\end{equation}

Let
\begin{equation}
\mathcal D_0
=
\left\{
\Phi|_G:\Phi\in C_b^2(\mathcal H)
\right\}.
\end{equation}

\begin{theorem}\label{thm:semigroup_derived}
Assume Assumption~\ref{ass:semigroup_basic}. For \(\varphi\in C(G)\), define
\begin{equation}
P_t\varphi(\xi)
=
\mathbb E_{\mu}
\left[
\varphi\bigl(H(\mu_t)\bigr)
\right],
\qquad
\mu=H^{-1}(\xi).
\end{equation}
%Then \(\{P_t\}_{t\geq0}\) is a strongly continuous contraction Markov semigroup on
%\(C(G)\).
Let \((\mathfrak L,D(\mathfrak L))\) be its infinitesimal generator. Then
\(\mathcal D_0\) is dense in \(C(G)\), and for every
\(\Phi\in C_b^2(\mathcal H)\),
\begin{equation}
\Phi|_G\in D(\mathfrak L)
\end{equation}
and
\begin{equation}
\mathfrak L(\Phi|_G)
=
\widetilde{\mathcal L}_0\Phi.
\end{equation}
Consequently, \(\widetilde{\mathcal L}_0\Phi\) depends only on the restriction
\(\Phi|_G\), and therefore defines a densely defined operator
\((\mathcal L_0,\mathcal D_0)\) on \(C(G)\). Moreover,
\begin{equation}
\mathcal L_0\subset\mathfrak L
\end{equation}
and
\begin{equation}
\overline{\mathcal L_0}\subset\mathfrak L.
\end{equation}

The generator \(\mathfrak L\) satisfies the Hille--Yosida conditions. In particular,
\(\mathfrak L\) is closed, densely defined, and dissipative, and for every
\(\lambda>0\),
\begin{equation}
\operatorname{Ran}(\lambda I-\mathfrak L)
=
C(G),
\end{equation}
with
\begin{equation}
\|(\lambda I-\mathfrak L)^{-1}\|_{\mathcal L(C(G))}
\leq
\frac1{\lambda}.
\end{equation}
\end{theorem}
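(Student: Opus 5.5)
The plan is to first establish that $(P_t)$ is a strongly continuous contraction semigroup on $C(G)$, then identify the generator on $\mathcal D_0$ via the Dynkin formula of Theorem~\ref{thm:generator}, and finally read off the Hille--Yosida properties from general semigroup theory.

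\emph{Step 1: $(P_t)$ acts on $C(G)$.} Fix $\varphi\in C(G)$. By Assumption~\ref{ass:semigroup_basic}, $\mu_t\in K$ a.s., so $H(\mu_t)\in G$ and $P_t\varphi$ is well defined and bounded by $\|\varphi\|_\infty$. For continuity in $\xi$, let $\xi_n\to\xi$ in $G$. Since $H|_K$ is a homeomorphism, $\mu^n=H^{-1}(\xi_n)\to\mu=H^{-1}(\xi)$ in $\gamma_2$. The Wasserstein stability estimate used in the proof of Theorem~\ref{convergence thm} (Lemma 2.4.1 of \cite{Dorogovtsev_2024}) then gives $H(\mu^n_t)\to H(\mu_t)$ in $L^2(\Omega;\mathcal H)$, hence in probability. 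Because $\varphi$ is uniformly continuous on the compact set $G$, dominated convergence yields $P_t\varphi(\xi_n)\to P_t\varphi(\xi)$. The semigroup property follows from the Markov property in Theorem~\ref{Markovian}, and positivity together with $P_t1=1$ gives contractivity.

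\emph{Step 2: strong continuity.} I would not argue pointwise here. Instead, note that $\mathcal D_0$ is dense in $C(G)$ by Stone--Weierstrass: restrictions of $C_b^2(\mathcal H)$ functions form an algebra containing constants, and functions such as $\xi\mapsto\theta(\langle \xi,e\rangle)$ with $\theta$ smooth and bounded separate points of $G$. For $\Phi\in C_b^2(\mathcal H)$, the Dynkin identity \eqref{eq:Dynkin}, combined with the uniform bound $\|\mathcal L\Phi\|_\infty\le C_\Phi$ from Step 4 of Theorem~\ref{thm:generator}, gives
\[
\|P_t\Phi|_G-\Phi|_G\|_\infty\le tC_\Phi .
\]
Contractivity then extends $\|P_t\varphi-\varphi\|_\infty\to0$ to all $\varphi\in C(G)$.

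\emph{Step 3: identifying the generator on $\mathcal D_0$.} We need
\[
\Bigl\|\tfrac{1}{t}\bigl(P_t\Phi-\Phi\bigr)-\widetilde{\mathcal L}_0\Phi\Bigr\|_\infty\to0 ,
\]
uniformly in $\xi\in G$ rather than merely pointwise. By \eqref{eq:semigroup_integral} this quantity is at most $\sup_{s\le t}\|P_s(\widetilde{\mathcal L}_0\Phi)-\widetilde{\mathcal L}_0\Phi\|_\infty$. The bound therefore follows from Step 2 once we know $\widetilde{\mathcal L}_0\Phi\in C(G)$. This continuity is the main obstacle. I would prove it by showing that $A$ and each $\Sigma_k$ are continuous from $G$ to $\mathcal H$, and that the tail $\sum_{k>N}\|\Sigma_k(\xi)\|^2$ is small uniformly on the compact $G$.

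For continuity, use the continuity of $H^{-1}$, the Lipschitz bound in Assumption~\ref{ass:semigroup_basic}, and the fact that $Dh$ and $D^2h$ are bounded and continuous. This gives $\mathscr S(\xi_n)\to\mathscr S(\xi)$ in $\mathcal L_2(U,\mathcal H)$, estimating the change in the measure via a coupling and the change in the coefficient via the $\gamma_2$-Lipschitz bound. Convergence in Hilbert--Schmidt norm handles the series $\sum_k D^2\Phi(\Sigma_k,\Sigma_k)$ without any separate tail argument. Once this holds, the uniform limit shows $\Phi|_G\in D(\mathfrak L)$ with $\mathfrak L\Phi|_G=\widetilde{\mathcal L}_0\Phi$. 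Well-definedness of $\mathcal L_0$ then follows because $\mathfrak L$ is single-valued, so $\mathcal L_0\subset\mathfrak L$. Since $\mathfrak L$ is closed, $\overline{\mathcal L_0}\subset\mathfrak L$.

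\emph{Step 4: Hille--Yosida properties.} These are standard for the generator of a strongly continuous contraction semigroup. For $\lambda>0$, the resolvent $R_\lambda\varphi=\int_0^\infty e^{-\lambda t}P_t\varphi\,dt$ maps $C(G)$ onto $D(\mathfrak L)$ and inverts $\lambda I-\mathfrak L$. This gives $\operatorname{Ran}(\lambda I-\mathfrak L)=C(G)$ and $\|R_\lambda\|\le 1/\lambda$. Dissipativity and closedness follow from the same resolvent identity.
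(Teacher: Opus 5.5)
Your overall architecture matches the paper's: the Feller property via the Wasserstein stability estimate, the semigroup property and contractivity from the Markov property, Stone--Weierstrass for $\mathcal D_0$, the Dynkin identity giving $\frac1t(P_t\Phi-\Phi)-\widetilde{\mathcal L}_0\Phi=\frac1t\int_0^t(P_s\widetilde{\mathcal L}_0\Phi-\widetilde{\mathcal L}_0\Phi)\,ds$, and then Hille--Yosida. There is, however, a concrete gap in how you obtain the Dynkin identity and the bound $\|P_t\Phi|_G-\Phi|_G\|_\infty\le tC_\Phi$.

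You import \eqref{eq:Dynkin}, \eqref{eq:semigroup_integral} and the constant $C_\Phi$ from Theorem~\ref{thm:generator}. That theorem is proved under \textup{(G2)}: $a$ bounded and $C_b=\sup\int\|b\|^2dq<\infty$. These hypotheses enter the stochastic Fubini/It\^o justification and also the definition $C_\Phi=\|D\Phi\|_\infty(\|Dh\|_\infty\|a\|_\infty+\dots)$. Assumption~\ref{ass:semigroup_basic} gives only a global Lipschitz condition, hence only linear growth. For example, $a(x,\mu)=-x$ is allowed, so $C_\Phi=\infty$ and your Step 2 yields nothing.

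The missing ingredient is exactly where the paper's proof begins:
\begin{enumerate}
\item[(i)] $\gamma_2$-compactness of $K$ gives $M_K=\sup_{\mu\in K}\int(1+|y|^2)\mu(dy)<\infty$.
\item[(ii)] With linear growth, this yields $\sup_{\mu\in K}\int(|a(y,\mu)|^2+\int|b(y,\mu,q)|^2dq)\mu(dy)\le C_K$. Hence $\sup_G\|A\|<\infty$ and $\sup_G\sum_k\|\Sigma_k\|^2\le\|Dh\|_\infty^2C_K$.
\item[(iii)] Invariance of $K$ gives $\mu_t\in K$ and $h_t\in G$ a.s.
\end{enumerate}
Together these replace $C_b$ in the Fubini and martingale estimates and give $\|P_t\Phi|_G-\Phi|_G\|_\infty\le t\sup_G|\widetilde{\mathcal L}_0\Phi|$. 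The same uniform second-moment bound is also what makes your Step 3 continuity argument for $A$ and $\mathscr S$ work. The integrands $y\mapsto Dh(y)a(y,\nu)$ and $y\mapsto\int D^2h(y)[b,b]\,dq$ have linear and quadratic growth, so you need uniform integrability along $\gamma_2$-convergent sequences in $K$.

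Once this is repaired, your route is valid, and it differs from the paper's in the strong-continuity step. The paper proves $\|P_t\varphi-\varphi\|\to0$ directly. It uses the flow estimate $\sup_{\mu\in K}\mathbb E_\mu\int|x(u,t)-u|^2\mu(du)\le C_K(t+t^2)$, the Lipschitz property of $h$, and a Chebyshev/modulus-of-continuity argument, with no It\^o calculus for $\Phi$. You instead get an $O(t)$ bound on the dense class $\mathcal D_0$ and extend it by contractivity. That saves a separate flow estimate, since you need the Dynkin identity anyway.

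Your continuity argument for $\widetilde{\mathcal L}_0\Phi$ is also slightly more economical than the paper's. You use convergence of $\mathscr S$ in Hilbert--Schmidt norm and need only bounded continuous $Dh,D^2h$. The paper instead proves a Lipschitz estimate that invokes bounded $D^3h$.
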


\begin{proof}
Since \(K\) is compact in \((\mathfrak M_2,\gamma_2)\), its second moments are
uniformly bounded. Thus
\begin{equation}
M_K
:=
\sup_{\mu\in K}
\int_{\mathbb R^d}
\bigl(1+|y|^2\bigr)\,\mu(dy)
<
+\infty.
\end{equation}

The Lipschitz condition on \(a\) and \(b\) implies the linear growth estimate
\begin{equation}
|a(y,\mu)|
+
\left(
\int_{\mathbb R^d}
|b(y,\mu,q)|^2\,dq
\right)^{1/2}
\leq
C_1
\left(
1+|y|+
\left(
\int_{\mathbb R^d}|z|^2\,\mu(dz)
\right)^{1/2}
\right).
\end{equation}
Consequently, there exists \(C_K<+\infty\) such that
\begin{equation}
\sup_{\mu\in K}
\int_{\mathbb R^d}
\left(
|a(y,\mu)|^2
+
\int_{\mathbb R^d}
|b(y,\mu,q)|^2\,dq
\right)
\mu(dy)
\leq C_K.
\end{equation}

We first verify the summability of the lifted diffusion coefficients. By Jensen's
inequality and Parseval's identity,
\begin{equation}
\begin{split}
\sum_{k=1}^{\infty}
\|\Sigma_k(\xi)\|_{\mathcal H}^2
&=
\sum_{k=1}^{\infty}
\left\|
\int_{\mathbb R^d}
Dh(y)b_k(y,\mu)\,\mu(dy)
\right\|_{\mathcal H}^2
\\
&\leq
\int_{\mathbb R^d}
\sum_{k=1}^{\infty}
\|Dh(y)b_k(y,\mu)\|_{\mathcal H}^2
\,\mu(dy)
\\
&\leq
\|Dh\|_{\infty}^2
\int_{\mathbb R^d}
\int_{\mathbb R^d}
|b(y,\mu,q)|^2\,dq\,\mu(dy)
\\
&\leq
\|Dh\|_{\infty}^2 C_K.
\end{split}
\end{equation}
Hence
\begin{equation}
\sup_{\xi\in G}
\sum_{k=1}^{\infty}
\|\Sigma_k(\xi)\|_{\mathcal H}^2
<
+\infty.
\end{equation}

Similarly,
\begin{equation}
\begin{split}
&
\sum_{k=1}^{\infty}
\left\|
\int_{\mathbb R^d}
D^2h(y)
\bigl[
b_k(y,\mu),b_k(y,\mu)
\bigr]
\,\mu(dy)
\right\|_{\mathcal H}
\\
&\qquad\leq
\|D^2h\|_{\infty}
\int_{\mathbb R^d}
\sum_{k=1}^{\infty}
|b_k(y,\mu)|^2\,\mu(dy)
\\
&\qquad=
\|D^2h\|_{\infty}
\int_{\mathbb R^d}
\int_{\mathbb R^d}
|b(y,\mu,q)|^2\,dq\,\mu(dy)
\\
&\qquad\leq
\|D^2h\|_{\infty}C_K.
\end{split}
\end{equation}
Thus the second-order drift series also converges uniformly on \(G\).

The same estimates, together with the boundedness of \(Dh\), \(D^2h\),
and \(D^3h\), imply that there exists a constant \(C_K\) such that
\begin{equation}
\|A(H(\mu))-A(H(\nu))\|_{\mathcal H}
+
\|\mathscr S(H(\mu))-\mathscr S(H(\nu))\|_{\mathcal L_2(U,\mathcal H)}
\leq
C_K\gamma_2(\mu,\nu)
\end{equation}
for all \(\mu,\nu\in K\). Hence \(A\) is continuous on \(G\), and
\(\mathscr S:G\to\mathcal L_2(U,\mathcal H)\) is continuous. It follows that, for every
\(\Phi\in C_b^2(\mathcal H)\),
\begin{equation}
\widetilde{\mathcal L}_0\Phi\in C(G).
\end{equation}

We next prove the Feller property. Suppose that
\begin{equation}
\xi_n\longrightarrow\xi
\qquad
\text{in }G.
\end{equation}
Since \(H|_K\) is a homeomorphism,
\begin{equation}
\mu_n:=H^{-1}(\xi_n)
\longrightarrow
\mu:=H^{-1}(\xi)
\qquad
\text{in }\gamma_2.
\end{equation}

Using the same Wiener sheet for the solutions starting from \(\mu_n\) and \(\mu\),
one can see, for every
\(t\geq0\),
\begin{equation}
\mathbb E
\gamma_2(\mu_t^n,\mu_t)^2
\leq
C_t\gamma_2(\mu_n,\mu)^2.
\end{equation}
Since \(h\) is globally Lipschitz,
\begin{equation}
\mathbb E
\|H(\mu_t^n)-H(\mu_t)\|_{\mathcal H}^2
\leq
\|Dh\|_{\infty}^2
C_t\gamma_2(\mu_n,\mu)^2.
\end{equation}
Therefore,
\begin{equation}
H(\mu_t^n)
\longrightarrow
H(\mu_t)
\qquad
\text{in probability}.
\end{equation}
Every \(\varphi\in C(G)\) is bounded and uniformly continuous because \(G\) is
compact. Hence
\begin{equation}
P_t\varphi(\xi_n)
\longrightarrow
P_t\varphi(\xi).
\end{equation}
Thus
\begin{equation}
P_t\varphi\in C(G).
\end{equation}

The Markov property of the lifted process implies
\begin{equation}
P_{t+s}\varphi
=
P_tP_s\varphi,
\qquad
s,t\geq0,
\end{equation}
and
\begin{equation}
P_0\varphi=\varphi.
\end{equation}
Moreover, \(P_t\) is positive and preserves constants:
\begin{equation}
P_t\varphi\geq0
\qquad
\text{whenever }\varphi\geq0,
\end{equation}
and
\begin{equation}
P_t1=1.
\end{equation}
Consequently,
\begin{equation}
\|P_t\varphi\|_{C(G)}
\leq
\|\varphi\|_{C(G)}.
\end{equation}

It remains to prove strong continuity. Let the process start from \(\mu\in K\).
Using the stochastic equation, the Itô isometry, and the uniform coefficient estimate
obtained above, we obtain
\begin{equation}
\sup_{\mu\in K}
\mathbb E_{\mu}
\int_{\mathbb R^d}
|x(u,t)-u|^2\,\mu(du)
\leq
C_K(t+t^2).
\end{equation}
Since \(h\) is Lipschitz,
\begin{equation}
\sup_{\xi\in G}
\mathbb E_{\xi}
\|h_t-\xi\|_{\mathcal H}^2
\leq
\|Dh\|_{\infty}^2C_K(t+t^2).
\end{equation}

Let \(\omega_{\varphi}\) be the modulus of continuity of
\(\varphi\in C(G)\). For every \(\delta>0\),
\begin{equation}
\begin{split}
|P_t\varphi(\xi)-\varphi(\xi)|
&\leq
\omega_{\varphi}(\delta)
+
2\|\varphi\|_{C(G)}
\mathbb P_{\xi}
\left\{
\|h_t-\xi\|_{\mathcal H}>\delta
\right\}
\\
&\leq
\omega_{\varphi}(\delta)
+
\frac{
2\|\varphi\|_{C(G)}
}{
\delta^2
}
\mathbb E_{\xi}
\|h_t-\xi\|_{\mathcal H}^2.
\end{split}
\end{equation}
Taking the supremum over \(\xi\in G\) yields
\begin{equation}
\|P_t\varphi-\varphi\|_{C(G)}
\leq
\omega_{\varphi}(\delta)
+
\frac{
2\|\varphi\|_{C(G)}
\|Dh\|_{\infty}^2C_K
}{
\delta^2
}
(t+t^2).
\end{equation}
First letting \(t\downarrow0\), and then \(\delta\downarrow0\), gives
\begin{equation}
\lim_{t\downarrow0}
\|P_t\varphi-\varphi\|_{C(G)}
=
0.
\end{equation}
Thus \(\{P_t\}_{t\geq0}\) is a strongly continuous contraction Markov semigroup on
\(C(G)\).

We now prove that \(\mathcal D_0\) is dense in \(C(G)\). The class
\begin{equation}
\mathcal A
=
\left\{
\Phi|_G:\Phi\in C_b^2(\mathcal H)
\right\}
\end{equation}
is an algebra containing the constants. It also separates points of \(G\). Indeed, if
\(\xi,\eta\in G\) and \(\xi\neq\eta\), put
\begin{equation}
z=\xi-\eta.
\end{equation}
Choose \(\chi\in C_c^{\infty}(\mathbb R)\) such that
\(\chi(r)=r\) on an interval containing
\begin{equation}
\left\{
\langle\zeta,z\rangle_{\mathcal H}:\zeta\in G
\right\}.
\end{equation}
Then the function
\begin{equation}
\Phi(\zeta)
=
\chi\bigl(
\langle\zeta,z\rangle_{\mathcal H}
\bigr)
\end{equation}
belongs to \(C_b^2(\mathcal H)\), and
\begin{equation}
\Phi(\xi)-\Phi(\eta)
=
\|\xi-\eta\|_{\mathcal H}^2
\neq0.
\end{equation}
The Stone--Weierstrass theorem therefore gives
\begin{equation}
\overline{\mathcal D_0}^{\,C(G)}
=
C(G).
\end{equation}

Let now \(\Phi\in C_b^2(\mathcal H)\). Applying the Hilbert-valued Itô formula first
to \(h(x(u,t))\), and then to \(\Phi(h_t)\), yields
\begin{equation}
dh_t
=
A(h_t)\,dt
+
\sum_{k=1}^{\infty}
\Sigma_k(h_t)\,d\beta_t^k.
\end{equation}
Consequently,
\begin{equation}
\begin{split}
d\Phi(h_t)
={}&
D\Phi(h_t)[A(h_t)]\,dt
+
\sum_{k=1}^{\infty}
D\Phi(h_t)[\Sigma_k(h_t)]\,d\beta_t^k
\\
&+
\frac12
\sum_{k=1}^{\infty}
D^2\Phi(h_t)
\bigl[
\Sigma_k(h_t),\Sigma_k(h_t)
\bigr]
\,dt.
\end{split}
\end{equation}
Hence
\begin{equation}
d\Phi(h_t)
=
(\widetilde{\mathcal L}_0\Phi)(h_t)\,dt
+
\sum_{k=1}^{\infty}
D\Phi(h_t)[\Sigma_k(h_t)]\,d\beta_t^k.
\end{equation}

The uniform summability estimate gives
\begin{equation}
\sup_{\xi\in G}
\sum_{k=1}^{\infty}
\left|
D\Phi(\xi)[\Sigma_k(\xi)]
\right|^2
\leq
\|D\Phi\|_{\infty}^2
\sup_{\xi\in G}
\sum_{k=1}^{\infty}
\|\Sigma_k(\xi)\|_{\mathcal H}^2
<
+\infty.
\end{equation}
Thus
\begin{equation}
M_t^\Phi
:=
\Phi(h_t)-\Phi(h_0)
-
\int_0^t
(\widetilde{\mathcal L}_0\Phi)(h_s)\,ds
\end{equation}
is a square-integrable martingale.

Taking expectations for the process started from \(\xi\in G\), we obtain
\begin{equation}
P_t(\Phi|_G)(\xi)-\Phi(\xi)
=
\mathbb E_{\xi}
\int_0^t
(\widetilde{\mathcal L}_0\Phi)(h_s)\,ds.
\end{equation}
By the Markov property,
\begin{equation}
P_t(\Phi|_G)-\Phi|_G
=
\int_0^t
P_s(\widetilde{\mathcal L}_0\Phi)\,ds
\end{equation}
in \(C(G)\). Therefore,
\begin{equation}
\frac{
P_t(\Phi|_G)-\Phi|_G
}{t}
=
\frac1t
\int_0^t
P_s(\widetilde{\mathcal L}_0\Phi)\,ds.
\end{equation}
Since \(\widetilde{\mathcal L}_0\Phi\in C(G)\) and \(P_t\) is strongly continuous,
\begin{equation}
\lim_{t\downarrow0}
\left\|
\frac{
P_t(\Phi|_G)-\Phi|_G
}{t}
-
\widetilde{\mathcal L}_0\Phi
\right\|_{C(G)}
=
0.
\end{equation}
It follows that
\begin{equation}
\Phi|_G\in D(\mathfrak L)
\end{equation}
and
\begin{equation}
\mathfrak L(\Phi|_G)
=
\widetilde{\mathcal L}_0\Phi.
\end{equation}

In particular, if \(\Phi,\Psi\in C_b^2(\mathcal H)\) satisfy
\begin{equation}
\Phi|_G=\Psi|_G,
\end{equation}
then
\begin{equation}
\widetilde{\mathcal L}_0\Phi
=
\widetilde{\mathcal L}_0\Psi
\qquad
\text{on }G.
\end{equation}
Thus the formula defines an operator
\((\mathcal L_0,\mathcal D_0)\) on \(C(G)\), and
\begin{equation}
\mathcal L_0\subset\mathfrak L.
\end{equation}
Since \(\mathfrak L\) is closed,
\begin{equation}
\overline{\mathcal L_0}
\subset
\mathfrak L.
\end{equation}

Finally, since \(\mathfrak L\) is the generator of a strongly continuous contraction
semigroup, the Hille--Yosida theorem implies that it is closed, densely defined, and
dissipative, and that for every \(\lambda>0\),
\begin{equation}
\operatorname{Ran}(\lambda I-\mathfrak L)
=
C(G),
\end{equation}
and
\begin{equation}
\|(\lambda I-\mathfrak L)^{-1}\|_{\mathcal L(C(G))}
\leq
\frac1{\lambda}.
\end{equation}
This completes the proof.
\end{proof}

%The preceding theorem identifies the infinitesimal operator on a dense class of smooth test functions. To conclude that this class is a core, one needs an additional range or regularization condition.

\begin{corollary}\label{cor:semigroup_core_condition}
Assume the conditions of Theorem~\ref{thm:semigroup_derived}. Suppose, in addition,
that for some \(\lambda_0>0\),
\begin{equation}
\overline{
\operatorname{Ran}
(\lambda_0 I-\mathcal L_0)
}^{\,C(G)}
=
C(G).
\end{equation}
Then
\begin{equation}
\overline{\mathcal L_0}
=
\mathfrak L.
\end{equation}
Consequently, \(\mathcal D_0\) is a core for \(\mathfrak L\), and
\(\overline{\mathcal L_0}\) satisfies the Hille--Yosida conditions on \(C(G)\).
\end{corollary}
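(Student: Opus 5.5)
This is the standard core criterion: a dissipative extension of a generator whose resolvent range is dense must coincide with the generator. I would argue directly, without citing the full Lumer--Phillips theorem.

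\emph{Step 1: closability and dissipativity of \(\mathcal L_0\).} By Theorem~\ref{thm:semigroup_derived}, \(\mathcal L_0\subset\mathfrak L\) and \(\mathfrak L\) is closed. Hence \(\mathcal L_0\) is closable, and its closure satisfies \(\overline{\mathcal L_0}\subset\mathfrak L\). Since \(\mathfrak L\) is dissipative, so is every restriction of it. Thus for \(\varphi\in D(\overline{\mathcal L_0})\) and \(\lambda>0\),
\[
\|(\lambda I-\overline{\mathcal L_0})\varphi\|\ge\lambda\|\varphi\|.
\]

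\emph{Step 2: the range of \(\lambda_0 I-\overline{\mathcal L_0}\) is closed and equals \(C(G)\).} Let \(\psi_n=(\lambda_0 I-\overline{\mathcal L_0})\varphi_n\to\psi\). The dissipativity bound shows that \((\varphi_n)\) is Cauchy, so \(\varphi_n\to\varphi\). Then \(\overline{\mathcal L_0}\varphi_n=\lambda_0\varphi_n-\psi_n\) converges. Closedness gives \(\varphi\in D(\overline{\mathcal L_0})\) and \((\lambda_0 I-\overline{\mathcal L_0})\varphi=\psi\). So the range is closed. It contains \(\operatorname{Ran}(\lambda_0 I-\mathcal L_0)\), which is dense by hypothesis. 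Therefore \(\operatorname{Ran}(\lambda_0 I-\overline{\mathcal L_0})=C(G)\).

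\emph{Step 3: equality of the operators.} Take \(\varphi\in D(\mathfrak L)\). By Step 2 there is \(\chi\in D(\overline{\mathcal L_0})\) with \((\lambda_0 I-\overline{\mathcal L_0})\chi=(\lambda_0 I-\mathfrak L)\varphi\). Since \(\overline{\mathcal L_0}\subset\mathfrak L\), this gives \((\lambda_0 I-\mathfrak L)(\chi-\varphi)=0\). Theorem~\ref{thm:semigroup_derived} says \(\lambda_0 I-\mathfrak L\) is injective, with bounded inverse of norm at most \(1/\lambda_0\). Hence \(\varphi=\chi\in D(\overline{\mathcal L_0})\), and so \(\overline{\mathcal L_0}=\mathfrak L\).

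\emph{Step 4: consequences.} That \(\mathcal D_0\) is a core is exactly the statement \(\overline{\mathcal L_0}=\mathfrak L\). The Hille--Yosida properties of \(\overline{\mathcal L_0}\) are then inherited from those of \(\mathfrak L\) established in Theorem~\ref{thm:semigroup_derived}.

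None of these steps is a real obstacle. The only point needing care is that the hypothesis is stated for a single \(\lambda_0\). This is enough because Step 3 uses only that one \(\lambda_0\), together with the injectivity of \(\lambda_0 I-\mathfrak L\), which holds for every \(\lambda>0\).
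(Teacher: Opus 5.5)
Your argument is correct and is essentially the paper's. The paper condenses your Steps~2--3 into the claim that dense range of $\lambda_0 I-\mathcal L_0$ makes $\overline{\mathcal L_0}$ maximally dissipative (citing Lumer--Phillips), so its dissipative extension $\mathfrak L$ must coincide with it; you unpack that step by showing $\operatorname{Ran}(\lambda_0 I-\overline{\mathcal L_0})$ is closed and using injectivity of $\lambda_0 I-\mathfrak L$, which avoids Lumer--Phillips entirely.
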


\begin{proof}
Since
\begin{equation}
\mathcal L_0\subset\mathfrak L
\end{equation}
and \(\mathfrak L\) generates a contraction semigroup, \(\mathcal L_0\) is
dissipative. Thus, for every \(\varphi\in\mathcal D_0\),
\begin{equation}
\|(\lambda_0 I-\mathcal L_0)\varphi\|_{C(G)}
\geq
\lambda_0\|\varphi\|_{C(G)}.
\end{equation}
The dense range condition implies that the closure
\(\overline{\mathcal L_0}\) is maximally dissipative. By the Lumer--Phillips theorem,
\(\overline{\mathcal L_0}\) generates a contraction semigroup. Since
\(\mathfrak L\) is a dissipative extension of \(\overline{\mathcal L_0}\), maximal
dissipativity gives
\begin{equation}
\overline{\mathcal L_0}
=
\mathfrak L.
\end{equation}
\end{proof}

\subsection{Dense range condition via  polynomials from measure}

The dense range condition can be analyzed through polynomial functionals
of the measure variable. Let
\begin{equation}
X=C_b^4(\mathbb R^d)
\end{equation}
be the Banach space of four times continuously differentiable functions
with bounded derivatives, equipped with the norm
\begin{equation}
\|f\|_X
=
\sum_{|\alpha|\leq4}
\sup_{x\in\mathbb R^d}
|D^\alpha f(x)|.
\end{equation}

Let \(K\) be a compact invariant set of probability measures and assume
that
\begin{equation}
G=H(K),
\end{equation}
where \(H:K\rightarrow G\)
is a homeomorphism.

Let \(\mathscr L\) denote the generator of the measure-valued Markov
process \((\mu_t)_{t\geq0}\) which is the solution to stochastic equation with interaction. The lifted process is defined by
\begin{equation}
h_t=H(\mu_t).
\end{equation}
We denote by \(\mathcal L_0\) the generator of the corresponding
Hilbert-valued Markov process. As it was calculated in (\ref{eq:generator_component}), for a sufficiently smooth function
\(\Phi:G\rightarrow\mathbb R\), it is given by
\begin{equation}
(\mathcal L_0\Phi)(h)
=
D\Phi(h)[A(h)]
+
\frac12
\sum_{k=1}^{\infty}
D^2\Phi(h)
[\Sigma_k(h),\Sigma_k(h)] .
\end{equation}

For a function \(P:K\rightarrow\mathbb R\), define its lift to \(G\) by
\begin{equation}
\widehat P(h)
=
P(H^{-1}(h)),
\qquad
h\in G .
\end{equation}
Then the generators of the original and lifted processes satisfy
\begin{equation}
(\mathcal L_0\widehat P)(H(\mu))
=
(\mathscr LP)(\mu),
\qquad
\mu\in K .
\end{equation}

\begin{proposition}
\label{prop:measure_polynomial_density}

Let \(K\) be compact in the weak topology. Consider the algebra
\(\mathcal P(K)\) generated by the linear functionals
\begin{equation}
\mu\longmapsto
\langle f,\mu\rangle
=
\int_{\mathbb R^d}f(x)\mu(dx),
\qquad
f\in C_b(\mathbb R^d).
\end{equation}
Then
\begin{equation}
\overline{\mathcal P(K)}^{\,C(K)}
=
C(K).
\end{equation}

Consequently, under the homeomorphism \(H:K\to G\),
\begin{equation}
\overline{\mathcal P_H(G)}^{\,C(G)}
=
C(G),
\end{equation}
where
\begin{equation}
\mathcal P_H(G)
=
\{P\circ H^{-1}:P\in\mathcal P(K)\}.
\end{equation}

\end{proposition}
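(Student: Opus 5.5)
The plan is to apply the real Stone--Weierstrass theorem on the compact Hausdorff space \(K\). We treat \(K\) with the weak topology, which is metrizable, for instance by \(\gamma_0\) or the bounded Lipschitz metric, so \(K\) is a compact metric space. The algebra \(\mathcal P(K)\) is, by construction, closed under sums, products and scalar multiples. It contains the constants, because taking \(f\equiv1\) gives \(\langle 1,\mu\rangle=1\) for every probability measure \(\mu\). So three things remain to check: that every element of \(\mathcal P(K)\) is continuous on \(K\), that \(\mathcal P(K)\) separates points, and that the conclusion transfers to \(G\).

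\textbf{Continuity.} For \(f\in C_b(\mathbb R^d)\), the map \(\mu\mapsto\langle f,\mu\rangle\) is continuous in the weak topology by the very definition of weak convergence. Finite sums and products of continuous functions are continuous, so \(\mathcal P(K)\subset C(K)\).

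\textbf{Point separation.} Let \(\mu\neq\nu\) in \(K\). Suppose \(\langle f,\mu\rangle=\langle f,\nu\rangle\) for every \(f\in C_b(\mathbb R^d)\). Taking \(f(x)=\cos(\lambda\cdot x)\) and \(f(x)=\sin(\lambda\cdot x)\) shows that the characteristic functions coincide, so \(\mu=\nu\), a contradiction. Alternatively, one can use that bounded continuous functions determine Borel probability measures on a metric space, via the approximation \(\mathbbm 1_F\approx\max(0,1-n\,\mathrm{dist}(\cdot,F))\) for closed sets \(F\). Hence some linear functional in \(\mathcal P(K)\) separates \(\mu\) and \(\nu\). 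Stone--Weierstrass then gives \(\overline{\mathcal P(K)}^{\,C(K)}=C(K)\).

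\textbf{Transfer to \(G\).} Since \(H:K\to G\) is a homeomorphism, the composition operator \(T:C(K)\to C(G)\), \(T P=P\circ H^{-1}\), is an isometric algebra isomorphism with respect to the sup norms. Indeed, \(\sup_{G}|P\circ H^{-1}|=\sup_K|P|\), and \(T\) is onto because \(\varphi=(\varphi\circ H)\circ H^{-1}\) with \(\varphi\circ H\in C(K)\). A surjective isometry maps dense subsets to dense subsets, so \(\mathcal P_H(G)=T(\mathcal P(K))\) is dense in \(T(C(K))=C(G)\).

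\textbf{Main obstacle.} The one delicate point is the topology. The statement assumes \(K\) compact in the weak topology, but elsewhere \(H\) is called a homeomorphism and \(K\) was assumed \(\gamma_2\)-compact. We will note that on a \(\gamma_2\)-compact set the \(\gamma_2\) and weak topologies coincide: the identity from \((K,\gamma_2)\) to the weak topology is a continuous bijection from a compact space to a Hausdorff space, hence a homeomorphism. Therefore \(C(K)\) is the same space in either topology. The homeomorphism property of \(H\) in the weak topology itself follows in the same way, since \(H\) is continuous (the continuity lemma for the lifting) and injective (Lemma~\ref{separates}) on the compact set \(K\). Everything else is routine.
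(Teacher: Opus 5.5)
Your proposal is correct and follows the same route as the paper: Stone--Weierstrass on $K$, using that $\mathcal P(K)$ contains the constants and separates points through the functionals $\mu\mapsto\langle f,\mu\rangle$, followed by transfer to $G$ through the homeomorphism $H$. The paper leaves implicit the continuity of these functionals in the weak topology, the characteristic-function argument for separation, the isometric composition operator, and the agreement of the $\gamma_2$ and weak topologies on a $\gamma_2$-compact $K$; your treatment of each of these is sound.
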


\begin{proof}

The algebra \(\mathcal P(K)\) contains constants. Moreover, it separates
points of \(K\). Indeed, if \(\mu\neq\nu\), then there exists
\(f\in C_b(\mathbb R^d)\) such that
\begin{equation}
\langle f,\mu\rangle
\neq
\langle f,\nu\rangle .
\end{equation}
Therefore, the Stone--Weierstrass theorem implies
\begin{equation}
\overline{\mathcal P(K)}^{\,C(K)}
=
C(K).
\end{equation}

The second statement follows from the fact that \(H\) is a homeomorphism.

\end{proof}

\begin{proposition}
\label{prop:generator_polynomial}

Let
\begin{equation}
P(\mu)
=
\prod_{j=1}^{n}
\langle f_j,\mu\rangle ,
\qquad
f_j\in C_b^2(\mathbb R^d).
\end{equation}
Then the generator acting on \(P\) is given by
\begin{equation}
\begin{aligned}
(\mathscr LP)(\mu)
&=
\sum_{i=1}^{n}
\langle L^\mu f_i,\mu\rangle
\prod_{j\neq i}
\langle f_j,\mu\rangle
\\
&\quad+
\sum_{1\leq i<j\leq n}
\sum_{k=1}^{\infty}
\left\langle
\nabla f_i\cdot b_k(\cdot,\mu),
\mu
\right\rangle
\left\langle
\nabla f_j\cdot b_k(\cdot,\mu),
\mu
\right\rangle
\prod_{\ell\neq i,j}
\langle f_\ell,\mu\rangle ,
\end{aligned}
\end{equation}
where
\begin{equation}
L^\mu f(x)
=
a(x,\mu)\cdot\nabla f(x)
+
\frac12
\sum_{k=1}^{\infty}
D^2f(x)
[b_k(x,\mu),b_k(x,\mu)] .
\end{equation}

Assume furthermore that
\begin{equation}
a(x,\mu)
=
\sum_{\alpha=1}^{N_a}
a_\alpha(x)
\prod_{j=1}^{m_\alpha}
\langle\phi_{\alpha,j},\mu\rangle ,
\end{equation}
and
\begin{equation}
b_k(x,\mu)
=
\sum_{\beta=1}^{N_b}
b_{k,\beta}(x)
\prod_{j=1}^{n_\beta}
\langle\psi_{\beta,j},\mu\rangle ,
\end{equation}
where
\begin{equation}
a_\alpha,b_{k,\beta}\in C_b^2(\mathbb R^d),
\qquad
\phi_{\alpha,j},\psi_{\beta,j}\in C_b(\mathbb R^d).
\end{equation}

Define
\begin{equation}
d_a=\max_\alpha m_\alpha,
\qquad
d_b=\max_\beta n_\beta .
\end{equation}
Then
\begin{equation}
\mathscr L\mathcal P_n(K)
\subset
\mathcal P_{n+r}(K),
\end{equation}
where
\begin{equation}
r=\max\{d_a,2d_b\}.
\end{equation}

\end{proposition}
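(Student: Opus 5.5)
The plan is to compute $\mathscr L P$ through the particle flow: write $P(\mu_t)$ in terms of the flow, apply Itô's formula, and read off the $dt$ coefficient.

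\emph{Step 1: one factor.} Since $\mu_t=\mu_0\circ x(\cdot,t)^{-1}$, each factor is
\[
\langle f_j,\mu_t\rangle=\int_{\mathbb R^d}f_j(x(u,t))\,\mu_0(du).
\]
For fixed $u$, apply Itô's formula to $f_j(x(u,t))$, using the component form \eqref{eq:SDE_component} of the equation with interaction. Then integrate in $\mu_0(du)$. The interchange is justified exactly as in Step~2 of the proof of Theorem~\ref{thm:generator}: dominated convergence for the $dt$-terms, and the stochastic Fubini theorem for the martingale part. The bounds needed there follow from $f_j\in C_b^2$ and from the summability \eqref{eq:quadratic_variation_identity}. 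Changing variables back to $\mu_t$ gives
\[
d\langle f_j,\mu_t\rangle=\langle L^{\mu_t}f_j,\mu_t\rangle\,dt+\sum_{k}\langle \nabla f_j\cdot b_k(\cdot,\mu_t),\mu_t\rangle\,d\beta_t^k .
\]
So each factor is a real semimartingale. Its drift is $\langle L^{\mu}f_j,\mu\rangle$, and its noise coefficients are $\sigma_{j,k}(\mu)=\langle\nabla f_j\cdot b_k(\cdot,\mu),\mu\rangle$.

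\emph{Step 2: the product.} Apply the multidimensional Itô product rule to $\prod_{j=1}^n\langle f_j,\mu_t\rangle$.
\begin{itemize}
\item The first-order terms give $\sum_i\langle L^\mu f_i,\mu\rangle\prod_{j\ne i}\langle f_j,\mu\rangle$.
\item The cross-variations $d[\langle f_i,\mu\rangle,\langle f_j,\mu\rangle]_t=\sum_k\sigma_{i,k}\sigma_{j,k}\,dt$ give the second sum over pairs $i<j$. The factor $\tfrac12$ in Itô's formula is cancelled because each unordered pair appears twice.
\item The series in $k$ converges absolutely by Cauchy--Schwarz together with $\sum_k\sigma_{i,k}^2\le\|\nabla f_i\|_\infty^2C_b$ (Parseval and Jensen, as in \eqref{eq:Sigma_summability}).
\end{itemize}
Hence $P(\mu_t)-P(\mu_0)-\int_0^t(\text{RHS})(\mu_s)\,ds$ is a square-integrable martingale. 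To identify this RHS with $\mathscr LP$ in the sense of the semigroup generator, I would repeat the argument of Step~4 of Theorem~\ref{thm:generator}. The RHS is continuous in $\mu$ under (G4), and the paths are continuous, so dividing $\mathbb E_\mu P(\mu_t)-P(\mu)$ by $t$ gives the limit.

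\emph{Step 3: degree count.} Substitute the polynomial forms of $a$ and $b_k$.
\begin{itemize}
\item $\langle L^\mu f_i,\mu\rangle$ consists of two parts. The part from $a$ is a sum of terms $\langle a_\alpha\cdot\nabla f_i,\mu\rangle\prod_{j\le m_\alpha}\langle\phi_{\alpha,j},\mu\rangle$, of degree at most $1+d_a$. The part from the $b$-terms involves $D^2f_i[b_{k,\beta},b_{k,\beta'}]$ times products of $n_\beta+n_{\beta'}$ linear functionals, of degree at most $1+2d_b$.
\item Multiplying by the remaining $n-1$ factors gives degree at most $n+\max\{d_a,2d_b\}$.
\item In a pair term, each $\sigma_{i,k}$ has degree at most $1+d_b$, so the product $\sigma_{i,k}\sigma_{j,k}$ times $n-2$ factors has degree at most $n+2d_b$.
\end{itemize}
Hence $\mathscr L\mathcal P_n(K)\subset\mathcal P_{n+r}(K)$, with $\mathcal P_n$ understood as polynomials of degree at most $n$ and extended by linearity.

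\emph{Main obstacle.} The main difficulty is the infinite sum over $k$. One needs $\sum_k D^2f_i[b_{k,\beta},b_{k,\beta'}]$ and $\sum_k\sigma_{i,k}\sigma_{j,k}$ to still be finite sums of products of linear functionals with $C_b$ test functions. For the diagonal $D^2$ term this holds pointwise in $x$: the sum $\sum_k b_{k,\beta}\otimes b_{k,\beta'}$ converges uniformly to a bounded continuous matrix-valued function, given a summability assumption $\sum_k\|b_{k,\beta}\|_\infty^2<\infty$, which I would add to the hypotheses.

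The cross term $\sum_k\langle g_k,\mu\rangle\langle \tilde g_k,\mu\rangle$ is the delicate one. I would rewrite it as a double integral
\[
\iint \nabla f_i(x)^{\top}\Big(\sum_k b_{k,\beta}(x)\otimes b_{k,\beta'}(y)\Big)\nabla f_j(y)\,\mu(dx)\mu(dy).
\]
This is a degree-two polynomial only in the generalized sense, with a two-variable kernel. It lies in the closure of $\mathcal P_2$, and in $\mathcal P_2$ itself when only finitely many $b_{k,\beta}$ are nonzero. So I would either assume finitely many active $k$, or interpret $\mathcal P_n$ as including such kernel monomials $\int F\,d\mu^{\otimes n}$ with $F\in C_b$. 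The degree bound above is unaffected either way.
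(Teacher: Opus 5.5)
The paper states Proposition~\ref{prop:generator_polynomial} without any proof, so there is no argument of the paper to compare yours with. Your route (It\^o's formula for $f_j(x(u,t))$, integration against $\mu_0$, the product rule for $\prod_j\langle f_j,\mu_t\rangle$, then a degree count) is the natural one. It uses the same mechanism the paper uses for \eqref{eq:generator_component} in Theorem~\ref{thm:generator}, and it is correct. The following all check out:
\begin{itemize}
\item the drift term;
\item the cross-variation term, with the $\tfrac12$ absorbed by passing to unordered pairs;
\item the absence of diagonal second-order terms;
\item the degree bounds leading to $n+\max\{d_a,2d_b\}$.
\end{itemize}

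Your ``main obstacle'' is a real defect of the statement, not of your proof, so you are right to add a hypothesis. As a functional on all probability measures, $\iint F\,d\mu\otimes\mu$ lies in some $\mathcal P_m$ only if the symmetrization of $F$ has finite rank. To see this, restrict to $\mu=\sum_{i\le m}p_i\delta_{x_i}$, homogenize, and compare the coefficients of $p_1\cdots p_m$.

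Here is a concrete counterexample to the literal inclusion. Take $d=1$, $a=0$, $b(x,\mu,q)=\phi(x-q)$ with $\phi(x)=e^{-x^2}$, and $P=\langle\arctan,\mu\rangle^2$. This fits the hypotheses with $N_b=1$, $n_1=0$. The cross term has kernel $f'(x)f'(y)\,(\phi*\phi)(x-y)$ with $f=\arctan$, and this kernel has infinite rank. So $\mathscr LP$ lies in no $\mathcal P_m$, and one of your two fixes is genuinely needed: finitely many active modes, or kernel polynomials $\int F\,d\mu^{\otimes n}$.

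For the diagonal $D^2f_i$ term, by contrast, your assumption $\sum_k\|b_{k,\beta}\|_\infty^2<\infty$ is more than necessary, and it is basis-dependent. Parseval \eqref{eq:quadratic_variation_identity} already identifies $\sum_k b_{k,\beta}(x)\otimes b_{k,\beta'}(x)$ with $\int b_\beta(x,q)\otimes b_{\beta'}(x,q)\,dq$, where $b_\beta(x,q)=\sum_k b_{k,\beta}(x)e_k(q)$. This is bounded and continuous as soon as $\sup_x\int|b_\beta(x,q)|^2\,dq<\infty$ and $x\mapsto b_\beta(x,\cdot)$ is continuous into $L^2(dq)$. In the example above it is simply constant.

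Two smaller remarks. First, the output test functions ($a_\alpha\cdot\nabla f_i$, $D^2f_i[\cdot,\cdot]$, $\psi_{\beta,j}$) are only in $C_b$. The inclusion should therefore be read as mapping polynomials with $C_b^2$ entries into polynomials with $C_b$ entries. Second, you may want $\mathscr LP$ as the generator on $C(K)$, which is how it is used in Theorem~\ref{thm:dense_range_condition}. In that case use the argument of Theorem~\ref{thm:semigroup_derived} instead of Step~4 of Theorem~\ref{thm:generator}: continuity of the right-hand side on $K$ plus strong continuity of the semigroup gives convergence uniformly in $\mu$ rather than pointwise.
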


\begin{lemma}
\label{lem:projective_polynomial_norm}

Let \(\mathcal X\) be a Banach space continuously embedded into
\(C_b(\mathbb R^d)\). Namely, assume that there exists
\(C_{\mathcal X}>0\) such that
\begin{equation}
\|f\|_\infty
\leq
C_{\mathcal X}\|f\|_{\mathcal X},
\qquad
f\in\mathcal X .
\end{equation}

For \(n\geq1\), let
\(\mathcal P_n^{\mathrm{hom}}(K)\) be the space of homogeneous
 polynomials from measures of degree \(n\), consisting of finite linear
combinations of functionals
\begin{equation}
\mu\longmapsto
\prod_{j=1}^{n}
\langle f_j,\mu\rangle ,
\qquad
f_j\in\mathcal X .
\end{equation}

For \(P\in\mathcal P_n^{\mathrm{hom}}(K)\), define
\begin{equation}
|P|_n
=
\inf
\sum_{\ell=1}^{N}
|c_\ell|
\prod_{j=1}^{n}
\|f_{\ell,j}\|_{\mathcal X},
\end{equation}
where the infimum is taken over all representations
\begin{equation}
P(\mu)
=
\sum_{\ell=1}^{N}
c_\ell
\prod_{j=1}^{n}
\langle f_{\ell,j},\mu\rangle .
\end{equation}

Then \(|\cdot|_n\) defines a norm on
\(\mathcal P_n^{\mathrm{hom}}(K)\).

\end{lemma}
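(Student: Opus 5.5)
The plan is to recognize $|\cdot|_n$ as the quotient of a projective tensor norm and check the norm axioms directly. The degenerate case is the only one needing a real argument.

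Finiteness, homogeneity and the triangle inequality are routine. Every $P\in\mathcal P_n^{\mathrm{hom}}(K)$ has at least one representation by definition, so $|P|_n<\infty$. For $c\neq0$, multiplying the coefficients of any representation of $P$ by $c$ gives a representation of $cP$, and conversely; hence $|cP|_n=|c|\,|P|_n$, and $|0|_n=0$ via the empty representation (or $c_1=0$). For the triangle inequality, fix $\varepsilon>0$ and choose representations of $P$ and $Q$ whose sums are within $\varepsilon$ of $|P|_n$ and $|Q|_n$. Concatenating them gives a representation of $P+Q$, so $|P+Q|_n\le|P|_n+|Q|_n+2\varepsilon$.

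The substantive step is definiteness: $|P|_n=0$ must force $P=0$ as a function on $K$. I will compare with the sup norm over $K$. For any representation and any $\mu\in K$, since $\mu$ is a probability measure,
\begin{equation}
|P(\mu)|\le\sum_{\ell=1}^N|c_\ell|\prod_{j=1}^n|\langle f_{\ell,j},\mu\rangle|
\le\sum_{\ell=1}^N|c_\ell|\prod_{j=1}^n\|f_{\ell,j}\|_\infty
\le C_{\mathcal X}^n\sum_{\ell=1}^N|c_\ell|\prod_{j=1}^n\|f_{\ell,j}\|_{\mathcal X}.
\end{equation}
Taking the infimum over representations gives
\begin{equation}
\sup_{\mu\in K}|P(\mu)|\le C_{\mathcal X}^n\,|P|_n .
\end{equation}
So $|P|_n=0$ implies $P\equiv0$ on $K$, which is the zero element of $\mathcal P_n^{\mathrm{hom}}(K)$.

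The only potential obstacle is being precise about what the space is. Its elements are functions on $K$, not formal tensors. A formal tensor can be nonzero while the induced functional vanishes on $K$, for instance when $K$ is small. Defining the norm on the functions themselves, with the infimum over all representations of the same function, is exactly what makes the sup-norm bound above settle definiteness. The same bound also shows the embedding $(\mathcal P_n^{\mathrm{hom}}(K),|\cdot|_n)\hookrightarrow C(K)$ is continuous with constant $C_{\mathcal X}^n$, which will be useful later for the weighted space $\mathcal B_\rho$.
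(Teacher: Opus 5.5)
Your proof is correct and follows the paper's argument. Homogeneity and the triangle inequality come from scaling and concatenating representations, and definiteness comes from the bound $|P(\mu)|\le C_{\mathcal X}^n\sum_\ell|c_\ell|\prod_j\|f_{\ell,j}\|_{\mathcal X}$ for probability measures $\mu\in K$. The paper phrases definiteness with a sequence of representations whose costs tend to zero, whereas you pass to the infimum to get $\sup_K|P|\le C_{\mathcal X}^n|P|_n$; this is the same estimate, and your handling of the reverse inequality in homogeneity and of the $\varepsilon$ in the triangle inequality is slightly more careful.
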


\begin{proof}

Non-negativity of \(|\cdot|_n\) follows directly from the definition. Homogeneity follows
from the fact that multiplying the coefficients of a representation by a
scalar gives a representation of the scaled polynomial.

For the triangle inequality, let
\(P,Q\in\mathcal P_n^{\mathrm{hom}}(K)\). Combining arbitrary
representations of \(P\) and \(Q\) gives a representation of \(P+Q\).
Taking the infimum over all representations yields
\begin{equation}
|P+Q|_n
\leq
|P|_n+|Q|_n .
\end{equation}

It remains to prove non-degeneracy. Assume that
\begin{equation}
|P|_n=0 .
\end{equation}
Then there exists a sequence of representations
\begin{equation}
P(\mu)
=
\sum_{\ell=1}^{N_m}
c_\ell^{(m)}
\prod_{j=1}^{n}
\langle f_{\ell,j}^{(m)},\mu\rangle
\end{equation}
such that
\begin{equation}
\sum_{\ell=1}^{N_m}
|c_\ell^{(m)}|
\prod_{j=1}^{n}
\|f_{\ell,j}^{(m)}\|_{\mathcal X}
\longrightarrow0 .
\end{equation}

For every probability measure \(\mu\in K\),
\begin{equation}
|\langle f,\mu\rangle|
\leq
\|f\|_\infty
\leq
C_{\mathcal X}\|f\|_{\mathcal X}.
\end{equation}
Therefore,
\begin{equation}
\begin{aligned}
|P(\mu)|
&\leq
C_{\mathcal X}^{\,n}
\sum_{\ell=1}^{N_m}
|c_\ell^{(m)}|
\prod_{j=1}^{n}
\|f_{\ell,j}^{(m)}\|_{\mathcal X}.
\end{aligned}
\end{equation}
Letting \(m\to\infty\), we obtain
\begin{equation}
P(\mu)=0,
\qquad
\mu\in K .
\end{equation}
Hence \(P=0\), which proves that \(|\cdot|_n\) is a norm.

\end{proof}

\begin{definition}
\label{def:weighted_polynomial_space}

Let
\begin{equation}
P=P_0+P_1+\cdots+P_N,
\qquad
P_n\in\mathcal P_n^{\mathrm{hom}}(K).
\end{equation}

Let \((\omega_n)_{n\geq0}\) be a sequence of positive weights.
Define
\begin{equation}
\|P\|_\rho
=
\sum_{n=0}^{N}
\omega_n|P_n|_n .
\end{equation}

The weighted polynomial space \(\mathcal B_\rho\) is defined as the
completion of \(\mathcal P(K)\) with respect to this norm. Let's consider the weights such that the embedding
\begin{equation}
\mathcal B_\rho\hookrightarrow C(K)
\end{equation}
is continuous. For example

Equivalently, we assume that there exists \(C_\rho>0\) such that
\begin{equation}
\|P\|_{C(K)}
\leq
C_\rho\|P\|_\rho ,
\qquad
P\in\mathcal P(K).
\end{equation}

\end{definition}

\begin{proposition}
\label{prop:generator_weighted_bounded}

Assume that the generator satisfies
\begin{equation}
\mathscr L
\mathcal P_n^{\mathrm{hom}}(K)
\subset
\bigoplus_{m\leq n+r}
\mathcal P_m^{\mathrm{hom}}(K)
\end{equation}
and that there exist constants \(A_n>0\) such that
\begin{equation}
|\mathscr LP|_{n+r}
\leq
A_n|P|_n,
\qquad
P\in\mathcal P_n^{\mathrm{hom}}(K).
\end{equation}

Suppose that the weights satisfy
\begin{equation}
\sup_{n\geq0}
\frac{\omega_{n+r}}{\omega_n}
A_n
<+\infty .
\end{equation}

Then there exists \(M_\rho>0\) such that
\begin{equation}
\|\mathscr LP\|_\rho
\leq
M_\rho\|P\|_\rho ,
\qquad
P\in\mathcal P(K).
\end{equation}

Consequently, \(\mathscr L\) extends to a bounded operator on
\(\mathcal B_\rho\).

\end{proposition}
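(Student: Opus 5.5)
The plan is to decompose an arbitrary $P\in\mathcal P(K)$ into homogeneous components, push $\mathscr L$ through each component using the two hypotheses, and then sum the resulting weighted norms. Write
\[
P=P_0+P_1+\cdots+P_N,\qquad P_n\in\mathcal P_n^{\mathrm{hom}}(K).
\]
We fix the decomposition used in the definition of $\|\cdot\|_\rho$; if it is not unique, we take the infimum over decompositions. By linearity, $\mathscr LP=\sum_{n}\mathscr LP_n$.

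For each $n$, the first hypothesis places $\mathscr LP_n$ in $\bigoplus_{m\le n+r}\mathcal P_m^{\mathrm{hom}}(K)$. We read the second hypothesis, $|\mathscr LP_n|_{n+r}\le A_n|P_n|_n$, as an estimate for $\mathscr LP_n$ viewed in the top-degree slot $n+r$. This reading is natural here: in Proposition~\ref{prop:generator_polynomial} a constant functional $1=\langle 1,\mu\rangle$ can be inserted as extra factors, since $\mu$ is a probability measure. Consequently every lower-degree polynomial is also a homogeneous polynomial of degree $n+r$, with the same projective norm up to the factor $\|1\|_{\mathcal X}^{\,n+r-m}$. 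Concretely, we regard $\mathscr LP_n$ as an element of $\mathcal P_{n+r}^{\mathrm{hom}}(K)$ via padding. Its contribution to $\|\mathscr LP\|_\rho$ is then at most $\omega_{n+r}|\mathscr LP_n|_{n+r}$.

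Using the triangle inequality for $\|\cdot\|_\rho$, which follows from Lemma~\ref{lem:projective_polynomial_norm} and the fact that $\|\cdot\|_\rho$ is a weighted sum of norms, we obtain
\[
\|\mathscr LP\|_\rho\le\sum_{n=0}^N\omega_{n+r}|\mathscr LP_n|_{n+r}\le\sum_{n=0}^N\omega_{n+r}A_n|P_n|_n\le M_\rho\sum_{n=0}^N\omega_n|P_n|_n=M_\rho\|P\|_\rho ,
\]
where $M_\rho:=\sup_n \frac{\omega_{n+r}}{\omega_n}A_n<\infty$. Taking the infimum over decompositions of $P$ gives the claimed inequality.

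For the extension, note that $\mathcal P(K)$ is dense in $\mathcal B_\rho$ by the definition of the completion, and $\mathscr L$ is Lipschitz with constant $M_\rho$ on this dense subspace. It therefore extends uniquely by continuity to a bounded operator on $\mathcal B_\rho$ with norm at most $M_\rho$.

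The main obstacle is the bookkeeping between homogeneous components. The degrees are not unique for functionals on probability measures, because of the padding identity $\langle 1,\mu\rangle=1$, so $\|\cdot\|_\rho$ must be understood as an infimum over decompositions. We must also check that regrouping the lower-degree parts of $\mathscr LP_n$ into the degree-$(n+r)$ slot is consistent with the hypothesis as stated. If instead one insists on keeping the components $m<n+r$ separate, the estimate still follows provided the hypothesis bounds the sum of the component norms. Either way, the point to verify carefully is that the hypothesis $|\mathscr LP|_{n+r}\le A_n|P|_n$ controls all of $\mathscr LP_n$, and not only its top-degree part.
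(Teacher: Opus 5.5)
Your proof is correct and follows the paper's argument: decompose $P=\sum_n P_n$, place each $\mathscr LP_n$ in the degree-$(n+r)$ slot to get $\|\mathscr LP\|_\rho\le\sum_n\omega_{n+r}|\mathscr LP_n|_{n+r}\le\sum_n\omega_{n+r}A_n|P_n|_n\le M_\rho\|P\|_\rho$, then extend $\mathscr L$ by density to $\mathcal B_\rho$. Two points the paper leaves implicit are made explicit in your version: padding with $\langle 1,\mu\rangle=1$, which makes $|\mathscr LP_n|_{n+r}$ meaningful, and taking the infimum over homogeneous decompositions, which makes $\|\cdot\|_\rho$ well defined.
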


\begin{proof}

Write
\begin{equation}
P=\sum_nP_n .
\end{equation}
By the degree estimate,
\(\mathscr LP_n\) only contains homogeneous components of degree at most
\(n+r\). Hence, using the definition of the weighted norm,
\begin{equation}
\|\mathscr LP\|_\rho
\leq
\sum_n
\omega_{n+r}
|\mathscr LP_n|_{n+r}.
\end{equation}

Using the assumption on the generator,
\begin{equation}
\|\mathscr LP\|_\rho
\leq
\sum_n
\omega_{n+r}A_n|P_n|_n .
\end{equation}

Therefore,
\begin{equation}
\|\mathscr LP\|_\rho
\leq
M_\rho
\sum_n
\omega_n|P_n|_n
=
M_\rho\|P\|_\rho .
\end{equation}

\end{proof}

\begin{theorem}[Dense range condition]
\label{thm:dense_range_condition}

Assume that the weighted polynomial space
\(\mathcal B_\rho\) satisfies the assumptions of
Definition~\ref{def:weighted_polynomial_space} and that
\(\mathscr L\) extends to a bounded operator on \(\mathcal B_\rho\).

Let
\begin{equation}
\lambda_0>M_\rho .
\end{equation}

Then
\begin{equation}
\overline{
\operatorname{Ran}
(\lambda_0I-\mathscr L)
}^{\,C(K)}
=
C(K).
\end{equation}

Consequently,
\begin{equation}
\overline{
\operatorname{Ran}
(\lambda_0I-\mathcal L_0)
}^{\,C(G)}
=
C(G).
\end{equation}

\end{theorem}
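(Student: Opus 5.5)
The plan is a Neumann series argument in $\mathcal B_\rho$, followed by density transfers into $C(K)$ and then into $C(G)$.

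\emph{Step 1: invertibility on $\mathcal B_\rho$.} By Proposition~\ref{prop:generator_weighted_bounded}, $\mathscr L$ extends to a bounded operator on $\mathcal B_\rho$ with $\|\mathscr L\|_{\mathcal L(\mathcal B_\rho)}\le M_\rho$. Since $\lambda_0>M_\rho$, the series
\begin{equation*}
R_{\lambda_0}=\sum_{j=0}^{\infty}\lambda_0^{-j-1}\mathscr L^{j}
\end{equation*}
converges in operator norm on $\mathcal B_\rho$. It is a two-sided inverse of $\lambda_0I-\mathscr L$ there, so
\begin{equation*}
(\lambda_0 I-\mathscr L)\mathcal B_\rho=\mathcal B_\rho .
\end{equation*}

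\emph{Step 2: passing to $C(K)$.} Let $\psi\in C(K)$ and $\varepsilon>0$. By Proposition~\ref{prop:measure_polynomial_density} there is $Q\in\mathcal P(K)$ with $\|\psi-Q\|_{C(K)}<\varepsilon$. I would choose the polynomials in Proposition~\ref{prop:measure_polynomial_density} with test functions in $\mathcal X$. This is allowed because $\mathcal X$ (e.g.\ $C_b^4$) is dense in $C_b$ on the relevant compact support in the sense needed for separating measures, and the Stone--Weierstrass argument only needs separation of points.

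Then $Q\in\mathcal B_\rho$. Put $u=R_{\lambda_0}Q\in\mathcal B_\rho$, so that $(\lambda_0I-\mathscr L)u=Q$. This identity is first an identity in $\mathcal B_\rho$.

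To view it in $C(K)$, approximate $u$ by truncations $u_N=\sum_{j\le N}\lambda_0^{-j-1}\mathscr L^jQ\in\mathcal P(K)$. We have $(\lambda_0I-\mathscr L)u_N=Q-\lambda_0^{-N-1}\mathscr L^{N+1}Q$. By the continuous embedding $\mathcal B_\rho\hookrightarrow C(K)$,
\begin{equation*}
\|Q-(\lambda_0I-\mathscr L)u_N\|_{C(K)}\le C_\rho (M_\rho/\lambda_0)^{N+1}\|Q\|_\rho\to0 .
\end{equation*}
Since each $u_N$ is a genuine polynomial, $\mathscr Lu_N$ is the action of the true generator, given by Proposition~\ref{prop:generator_polynomial}. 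Hence $(\lambda_0I-\mathscr L)u_N$ lies in the range in $C(K)$ and is within $2\varepsilon$ of $\psi$ for $N$ large. This gives the first density claim.

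\emph{Step 3: transfer to $G$.} Use the homeomorphism $H:K\to G$ and the intertwining relation $(\mathcal L_0\widehat P)(H(\mu))=(\mathscr LP)(\mu)$. This gives $\operatorname{Ran}(\lambda_0I-\mathcal L_0)\supset\{(\lambda_0 u_N-\mathscr Lu_N)\circ H^{-1}\}$. The composition map $\varphi\mapsto\varphi\circ H^{-1}$ is an isometric isomorphism $C(K)\to C(G)$, so density carries over.

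\textbf{Main obstacle.} The delicate point is that the lifted polynomials $\widehat P=P\circ H^{-1}$ must lie in the domain $\mathcal D_0$ on which $\mathcal L_0$ is defined, i.e.\ be restrictions of $C_b^2(\mathcal H)$ functions, or at least be covered by the intertwining identity.

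My first attempt is to take test functions $f_j$ in the span of $\{u\mapsto\langle h(u),z\rangle_{\mathcal H}: z\in\mathcal H\}$. For such $f_j$, $\langle f_j,\mu\rangle=\langle H(\mu),z_j\rangle$ is linear in $h$, so $\widehat P$ extends to a smooth cylindrical function, after cutoff by $\chi$ as in the proof of Theorem~\ref{thm:semigroup_derived}. For the Gaussian kernel this span is dense in $C_0$, so separation still holds.

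Failing that, I would rely on the stated intertwining identity, which the paper takes as given for sufficiently smooth $\Phi$. A secondary check is that the $\mathcal X$-norms of $\mathscr L$ applied to products stay controlled, but this is exactly the hypothesis encoded in $M_\rho$.
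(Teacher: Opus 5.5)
Your argument is essentially the paper's proof: you use the truncated Neumann sums $u_N=\sum_{j\le N}\lambda_0^{-j-1}\mathscr L^jP$ and the bound $\|\lambda_0^{-N-1}\mathscr L^{N+1}P\|_\rho\le (M_\rho/\lambda_0)^{N+1}\|P\|_\rho$, pass to $C(K)$ through the continuous embedding, invoke density of measure polynomials, and transfer to $G$ via the homeomorphism $H$ and the intertwining identity (your Step 1 is not needed, and taking test functions in $\mathcal X$ is a harmless refinement the paper leaves implicit). One caveat: your ``first attempt'' at the domain issue does not work on its own, because polynomials with test functions $u\mapsto\langle h(u),z\rangle_{\mathcal H}$ are not preserved by $\mathscr L$, so the $u_N$ leave that class; Step 3 therefore rests on the stated intertwining identity, exactly as in the paper.
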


\begin{proof}

For \(P\in\mathcal P(K)\), define
\begin{equation}
u
=
\sum_{m=0}^{\infty}
\lambda_0^{-m-1}
\mathscr L^mP .
\end{equation}

Since
\(\mathscr L\) is bounded on \(\mathcal B_\rho\),
\begin{equation}
\|\mathscr L^mP\|_\rho
\leq
M_\rho^m\|P\|_\rho .
\end{equation}

Because
\(\lambda_0>M_\rho\), the series converges in
\(\mathcal B_\rho\).

For the partial sums
\begin{equation}
u_N
=
\sum_{m=0}^{N}
\lambda_0^{-m-1}
\mathscr L^mP ,
\end{equation}
we have
\begin{equation}
(\lambda_0I-\mathscr L)u_N
=
P-
\lambda_0^{-N-1}
\mathscr L^{N+1}P .
\end{equation}

Moreover,
\begin{equation}
\left\|
\lambda_0^{-N-1}
\mathscr L^{N+1}P
\right\|_\rho
\leq
\left(
\frac{M_\rho}{\lambda_0}
\right)^{N+1}
\|P\|_\rho
\rightarrow0 .
\end{equation}

The continuous embedding
\(\mathcal B_\rho\hookrightarrow C(K)\) gives
\begin{equation}
\lambda_0^{-N-1}
\mathscr L^{N+1}P
\rightarrow0
\quad\text{in }C(K).
\end{equation}

Hence
\begin{equation}
P
\in
\overline{
\operatorname{Ran}
(\lambda_0I-\mathscr L)
}^{\,C(K)} .
\end{equation}

By Proposition~\ref{prop:measure_polynomial_density},
 polynomials from measures are dense in \(C(K)\). Therefore,
\begin{equation}
\overline{
\operatorname{Ran}
(\lambda_0I-\mathscr L)
}^{\,C(K)}
=
C(K).
\end{equation}

The final assertion follows from the homeomorphism \(H:K\to G\).

\end{proof}

\section{Definition of Stability and sufficient condition}\label{sec:stability}
We refer to a definition in \cite{Dorogovtsev_2024}.
\begin{definition}
A set of random measures $\{\mu_\alpha; \alpha \in \mathcal{A}\} \subset \mathfrak{M}_n$ is called shift compact if for every $\alpha \in \mathcal{A}$ there is a random vector $u_\alpha \in \mathbb{R}^d$ such that $\{\mu_\alpha - u_\alpha, \alpha \in \mathcal{A}\}$ is weakly compact in $\mathfrak{M}_n$, i.e., for every $\varepsilon > 0$ there is a compact set $F \subset \mathfrak{M}_n$ for which the following relation holds:
\begin{equation}
\forall \alpha \in \mathcal{A} \quad P(\mu_\alpha - u_\alpha \in F) > 1 - \varepsilon.
\end{equation}
\end{definition}
\subsection{Asymptotic Behavior}
\begin{theorem}\label{cov}
Assume that in 
\begin{equation}
dx(u,t)=\int_{\mathbb{R}^d}\varphi(x(u,t)-v)\mu_t(dv)dt
+\int_{\mathbb{R}^d}b(x(u,t),\mu_t,q)W(dq,dt)
\end{equation}
$\mu_0 \in \mathfrak{M}_{2n+2}$, $\varphi$ satisfies the condition
\begin{equation}
(u - v, \varphi(u) - \varphi(v)) \leq -\alpha \|u - v\|^2, \quad u, v \in \mathbb{R}^d,
\end{equation}
$b$ satisfies the Lipschitz condition relatively to both variables with constant $B$ and
\begin{equation}
\alpha - \frac{1}{2} B^2 (2n + 1) \geq 0.
\end{equation}

Then for any fixed $h\in H$ we have
\begin{equation}
\begin{split}
&COV((g_{t+k},h),(g_{s+k},h))
\to COV_\infty(t-s),\quad k\to\infty
\end{split}
\end{equation}
where
\begin{equation}\label{eq:cov_infty_def}
\begin{split}
\mathrm{COV}_\infty(\tau)
=&(g,h(0))\cdot\left[\mathbb{E}(g,h(x(0,\tau)))-(g,h(0))\right].
\end{split}
\end{equation}
\end{theorem}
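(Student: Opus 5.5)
Under the dissipativity condition $\alpha-\tfrac12B^2(2n+1)\ge0$ the centred measure process stabilises, and the covariance of the observables $(g_t,h)$, where $g_t=\mathrm H(\mu_t-c_t)$ with $c_t=\int v\,\mu_t(\mathrm dv)$ the centre of mass, converges to a stationary profile. The plan uses the shift-compactness theory of \cite{Dorogovtsev_2024} for the centred dynamics and the Lipschitz property of $h$ to transfer convergence to the Hilbert space. I interpret $g_t$ as this lifted centred measure; the limit $g$ in \eqref{eq:cov_infty_def} is then the lift of the limiting centred configuration, which the formula suggests is the Dirac mass at $0$.

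\emph{Step 1 (contraction of the centred flow).} For two labels $u,v$, apply It\^o's formula to $\|x(u,t)-x(v,t)\|^{2p}$ for $p\le n+1$. The dissipativity of $\varphi$ gives a drift term $\le-2p\alpha\|x(u,t)-x(v,t)\|^{2p}$. Averaging the interaction over $\mu_t$ is harmless, since the integrand $\varphi(x(u)-w)-\varphi(x(v)-w)$ sees the same $w$. The Lipschitz noise contributes at most $p(2p-1)B^2\|x(u,t)-x(v,t)\|^{2p}$. The hypothesis $\alpha-\tfrac12B^2(2n+1)\ge0$ is exactly what makes the resulting exponent nonpositive. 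This yields
\[
\sup_t\mathbb E\iint\|x(u,t)-x(v,t)\|^{2n+2}\mu_0(\mathrm du)\mu_0(\mathrm dv)<\infty,
\]
which is finite because $\mu_0\in\mathfrak M_{2n+2}$. It gives shift compactness of $\{\mu_t\}$ with shift $c_t$. Where the inequality is strict, the same computation gives exponential decay of the diameter.

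\emph{Step 2 (convergence of the lifted centred process).} Using the Lipschitz bound from the proof of Theorem~\ref{convergence thm}, namely $\|\mathrm H(\mu)-\mathrm H(\nu)\|\le L\gamma_1(\mu,\nu)$, Step 1 gives $g_{t+k}\to g$ in $L^2(\Omega;\mathcal H)$ as $k\to\infty$. I then condition on $\mathcal F_{s+k}$ and use the Markov property (Theorem~\ref{Markovian}) together with the time homogeneity of the coefficients.

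Once the configuration has collapsed, the increment over the time lag $\tau=t-s$ behaves like the one-point motion started from $0$. Hence $\mathbb E[(g_{t+k},h)\mid\mathcal F_{s+k}]$ is close to $\mathbb E(g,h(x(0,\tau)))$, with an error that is controlled uniformly by Theorem~\ref{convergence thm} and the bound $\mathbb E\gamma_2(\mu^n_t,\mu_t)\le C_t\mathbb E\gamma_2(\mu^n_0,\mu_0)$.

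\emph{Step 3 (passing to the covariance).} Expand
\[
\mathrm{Cov}\bigl((g_{t+k},h),(g_{s+k},h)\bigr)=\mathbb E\bigl[(g_{s+k},h)\,\mathbb E[(g_{t+k},h)\mid\mathcal F_{s+k}]\bigr]-\mathbb E(g_{t+k},h)\,\mathbb E(g_{s+k},h).
\]
Boundedness of $h$ and the $L^2$ convergence of Step 2 allow the limit to be taken inside. The first factor tends to $(g,h(0))$. The difference of the remaining terms tends to $(g,h(0))\bigl[\mathbb E(g,h(x(0,\tau)))-(g,h(0))\bigr]$, which is the formula in \eqref{eq:cov_infty_def}.

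\emph{Main obstacle.} Step 2 is the hard part: identifying the conditional law of the lag-$\tau$ increment with the one-point motion from the origin. In the boundary case $\alpha=\tfrac12B^2(2n+1)$ Step 1 gives only boundedness, not collapse. There I would try to extract, via shift compactness, a subsequence along which $\mu_{s+k}-c_{s+k}$ converges in law to some $\nu_\infty$, and then use continuity of the transition kernel (the Feller argument of Theorem~\ref{thm:semigroup_derived}) to pass to the limit. This would leave the identification of $\nu_\infty$ with $\delta_0$, and its independence of the subsequence, as the genuinely delicate point.
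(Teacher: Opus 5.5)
\paragraph{Verdict.} There is a genuine gap, and it comes from building the centring into $g_t$.

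\paragraph{Why Steps~2 and~3 cannot give the formula.} In Step~2 you obtain $g_{t+k}=\mathrm H(\mu_{t+k}-c_{t+k})\to g$ in $L^2(\Omega;\mathcal H)$, with the same limit $g$ for every $t$. Then $(g_{t+k},h)$ and $(g_{s+k},h)$ converge in $L^2(\Omega)$ to the same random variable $(g,h)$. Their covariance therefore converges to $\operatorname{Var}\bigl((g,h)\bigr)$. This does not depend on $\tau=t-s$ at all, and it is $0$ when $g=\mathrm H(\delta_0)=h(0)$, as you suggest. So your own Step~2 forces a limit that contradicts the one you assert in Step~3. The last sentence of Step~3 is fitted to the target formula, not derived.

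The heuristic behind it is also misplaced. After collapse, the one-point motion $x(0,\tau)$ describes how the cluster moves as a whole, i.e.\ the displacement of the centre $c_t$. That displacement is exactly what your centring removes. For your $g_t$, $\mathbb E[(g_{t+k},h)\mid\mathcal F_{s+k}]$ tends in $L^2$ to the constant $(h(0),h)$. So the identification you call the main obstacle is not delicate; it is false.

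\paragraph{What the paper does instead.} The paper's proof does not centre the observable. It reads the statement as $\mathrm{COV}((g,h_{t+k}),(g,h_{s+k}))$, with $g\in\mathcal H$ fixed and $h_t=\mathrm H(\mu_t)$ uncentred. It sets $\Phi(x)=(g,h(x))$ and writes the covariance as $\iint\mathrm{COV}(\Phi(x(u,t+k)),\Phi(x(v,s+k)))\,\mu_0(\mathrm du)\mu_0(\mathrm dv)$. Collapse then lets it replace $\Phi(x(u,t+k))$ by $\Phi(m_{t+k})$, where $m_t=\int x(u,t)\,\mu_0(\mathrm du)$. All surviving randomness sits in the centre of mass. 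The argument continues with the increment $m_{t+k}-m_{s+k}=\int(x(u,t-s)-u)\,\mu_{s+k}(\mathrm du)$ and the limit $\nu_\infty$ of $\mu_t-m_t$.

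Your proposal never analyses the motion of the centre, which is the only possible source of a $\tau$-dependent limit. Note also that the paper's argument ends with $\mathrm{COV}_\infty(\tau)=\mathrm{COV}\bigl(\Phi(\int x(u,\tau)\,\nu_\infty(\mathrm du)),\Phi(\int u\,\nu_\infty(\mathrm du))\bigr)$, not with the displayed product formula. Reverse-engineering that formula is therefore not a reliable guide.

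\paragraph{The boundary case is not an obstacle.} Run the It\^o computation at exponent $2n$ instead of $2n+2$. The rate becomes $n(2\alpha-(2n-1)B^2)$. Since $2\alpha\ge(2n+1)B^2$, this is strictly positive unless $\alpha=B=0$. Hence $\mathbb E\|x(u,t)-x(v,t)\|^{2n}\le\|u-v\|^{2n}e^{-n(2\alpha-(2n-1)B^2)t}$, which is exactly the paper's Step~2. It gives $\mathbb E\,\gamma_{2n}(\mu_t-m_t,\delta_0)^{2n}\to0$ directly, with no subsequence or Feller argument.
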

\begin{proof}
The set $\{\mu_t; t \geq 0\}$ is shift compact in $\mathfrak{M}_{2n}$ by theorem 5.2.1 in \cite{Dorogovtsev_2024}.

Denote
\begin{equation}
m_t=\int_{\mathbb{R}^d}x(u,t)\mu_0(\mathrm{d}u).
\end{equation}

Step 1. We claim that $\{\mu_t-m_t\}$ satisfies the Cauchy criteria, namely

$\forall \varepsilon>0,\exists\  T>0,$ s.t.
\begin{equation}
 \gamma_{2n}(\mu_t-m_t,\mu_s-m_s)<\varepsilon,\forall s,t>T
\end{equation}

To prove it, consider
\begin{equation}
\begin{split}
&\mathbb{E}\gamma_{2n}(\mu_t-m_t,\mu_s-m_s)\\
=&\mathbb{E}\left(\underset{\varkappa\in C(\mu,\nu)}{\inf}\iint_{\mathbb{R}^d\times\mathbb{R}^d}\|u-v\|^{2n}\varkappa(\mathrm{d}u,\mathrm{d}v)\right)^\frac{1}{2n}\\
\le&\mathbb{E}\left(\iint_{\mathbb{R}^d\times\mathbb{R}^d}\|u-v\|^{2n}(\mu_t-m_t)(\mathrm{d}u)(\mu_s-m_s)(\mathrm{d}v)\right)^\frac{1}{2n}\\
=&\mathbb{E}\left(\iint_{\mathbb{R}^d\times\mathbb{R}^d}\|x(u,t)-m_t-x(v,s)+m_s\|^{2n}\mu_0(\mathrm{d}u)\mu_0(\mathrm{d}v)\right)^\frac{1}{2n}\\
\le&\iint_{\mathbb{R}^d\times\mathbb{R}^d}\left(\mathbb{E}\|x(u,t)-m_t-x(v,s)+m_s\|^{2n}\mu_0(\mathrm{d}u)\mu_0(\mathrm{d}v)\right)^\frac{1}{2n}\\
\end{split}
\end{equation}
Thus, we need to estimate $\|x(u,t)-m_t\|$

Step 2. Define the difference $\Delta(t):=x(u,t) - x(v,t)$. For notational simplicity, suppose that $\Delta^k(t)$ is the $k_{\text{th}}$ component of $\Delta(t)$ and $\Delta_b^k(t):=b^k(x(u,s),\mu_s,q) - b^k(x(v,s),\mu_s,q)$. 
We aim to compute  $\mathbb{E}\|\Delta(t)\|^{2n}$ by Ito's formula.
\begin{equation}
\begin{split}
&\mathbb{E} \|\Delta(t)\|^{2n}\\
=&\|u-v\|^{2n}+ 2n\int_0^t\mathbb{E} \left[ \|\Delta(s)\|^{2n - 2}\cdot\left( \Delta(s), \int_{\mathbb{R}^d} (\varphi(x(u,s) - y) - \varphi(x(v,s) - y)) \mu_s(\mathrm{d}y) \right) \right]\mathrm{d}s \\
&+ n\int_0^t\mathbb{E} \left[ \|\Delta(s)\|^{2n - 4}\sum_{k=1}^d [(2n-2)(\Delta^k(s))^2+\|\Delta(s)\|^{2}]\left(\Delta_b^k(s)\right)^2 \right]\mathrm{d}s\\
\le&\|u-v\|^{2n}-2n\alpha\int_0^t\mathbb{E}\|\Delta(s)\|^{2n}\mathrm{d}s+n(2n-1)B^2\int_0^t\mathbb{E}\|\Delta(s)\|^{2n}\mathrm{d}s\\
=&\|u-v\|^{2n}-n\left(2\alpha - (2n - 1) B^2\right)\int_0^t\mathbb{E}\|\Delta(s)\|^{2n}\mathrm{d}s
\end{split}
\end{equation}
where $2\alpha - (2n - 1) B^2>0$ by condition.

By Grönwall’s inequality, yields
\begin{equation}
\mathbb{E} \|\Delta(t)\|^{2n} \le \|u-v\|^{2n} e^{-n\left(2\alpha - (2n - 1) B^2\right) t}
\end{equation}
which implies  all the particles gradually stick together and move as one heavy particle. Therefore, under such consideration, when $t\to\infty$ we have
\begin{equation}
\begin{split}
&\mathbb{E}\|x(u,t)-m_t\|^{2n}\\
=&\mathbb{E}\left\|x(u,t)-\int_{\mathbb{R}^d}x(w,t)\mu_0(\mathrm{d}w)\right\|^{2n}\\
=&\mathbb{E}\left\|\int_{\mathbb{R}^d}(x(u,t)-x(w,t))\mu_0(\mathrm{d}w)\right\|^{2n}\\
\le&\int_{\mathbb{R}^d}\mathbb{E}\|x(u,t)-x(w,t)\|^{2n}\mu_0(\mathrm{d}w)\\
\le&\int_{\mathbb{R}^d}\|u-w\|^{2n}e^{-n\left(2\alpha - (2n - 1) B^2\right) t}\mu_0(\mathrm{d}w)\\
\le&M\cdot e^{-n\left(2\alpha - (2n - 1) B^2\right) t}\to  0 ,\quad t\to\infty
\end{split}
\end{equation}
where $M<+\infty$ is a upper bound for $\int_{\mathbb{R}^d}\|u-w\|^{2n}\mu_0(\mathrm{d}w)$. The existence of $M$ is obtained by $\mu_0\in\mathfrak{M}_{2n+2}$
Thus, by dominated convergence theorem, $\gamma_{2n}(\mu_t-m_t,\mu_s-m_s)\to0$ and finally we obtain the existence of $\mu_t-m_t$ when $t\to\infty$.

In conclusion,
\begin{equation}
\gamma_{2n}(\mu_t-m_t,\mu_s-m_s)\to0,\quad \text{when }s,t\to\infty
\end{equation}
by Cauchy criteria, $\lim_{t\to\infty}(\mu_t-m_t)$ exists, denoted by $\nu_{\infty}$ 

Step 3. Consider the covariance.
Recall that
\begin{equation}\label{eq:ht_def}
h_t=\int_{\mathbb{R}^d}h(x(u,t))\,\mu_0(\mathrm{d}u),
\qquad
(g,h_t)=\int_{\mathbb{R}^d}(g,h(x(u,t)))\,\mu_0(\mathrm{d}u).
\end{equation}
Define the scalar function
\begin{equation}\label{eq:Phi_def}
\Phi(x):=(g,h(x)), \qquad x\in\mathbb{R}^d.
\end{equation}
By the assumptions on $h$ and $g$, we have $\Phi\in C_b^2(\mathbb{R}^d)$.

We consider the covariance
\begin{equation}\label{eq:cov_start}
\begin{split}
\mathrm{COV}\big((g,h_{t+k}),(g,h_{s+k})\big)
&=
\mathrm{COV}\Big(
\int_{\mathbb{R}^d}\Phi(x(u,t+k))\,\mu_0(\mathrm{d}u),
\int_{\mathbb{R}^d}\Phi(x(v,s+k))\,\mu_0(\mathrm{d}v)
\Big)\\
&=\iint_{\mathbb{R}^d\times\mathbb{R}^d}
\mathrm{COV}\big(\Phi(x(u,t+k)),\Phi(x(v,s+k))\big)
\,\mu_0(\mathrm{d}u)\mu_0(\mathrm{d}v).
\end{split}
\end{equation}

We decompose
\begin{equation}\label{eq:decompose_x}
x(u,t+k)=\big(x(u,t+k)-m_{t+k}\big)+m_{t+k},
\end{equation}
where
\begin{equation}\label{eq:mt_def}
m_t=\int_{\mathbb{R}^d}x(u,t)\,\mu_0(\mathrm{d}u).
\end{equation}

By Step~2, for any $n\ge1$,
\begin{equation}\label{eq:collapse}
\mathbb{E}\|x(u,t+k)-m_{t+k}\|^{2n}\xrightarrow[k\to\infty]{}0,
\end{equation}
uniformly in $u$ with respect to $\mu_0$.

Since $\Phi\in C_b^1(\mathbb{R}^d)$, by Taylor’s formula and Jensen’s inequality,
\begin{equation}\label{eq:Phi_collapse}
\mathbb{E}\big|\Phi(x(u,t+k))-\Phi(m_{t+k})\big|
\xrightarrow[k\to\infty]{}0,
\end{equation}
and similarly,
\begin{equation}\label{eq:Phi_collapse_2}
\mathbb{E}\big|\Phi(x(v,s+k))-\Phi(m_{s+k})\big|
\xrightarrow[k\to\infty]{}0.
\end{equation}

Therefore,
\begin{equation}\label{eq:cov_reduction}
\mathrm{COV}\big(\Phi(x(u,t+k)),\Phi(x(v,s+k))\big)
-
\mathrm{COV}\big(\Phi(m_{t+k}),\Phi(m_{s+k})\big)
\xrightarrow[k\to\infty]{}0,
\end{equation}
uniformly in $u,v$. By dominated convergence, we obtain
\begin{equation}\label{eq:cov_limit_mt}
\mathrm{COV}\big((g,h_{t+k}),(g,h_{s+k})\big)
\xrightarrow[k\to\infty]{}
\mathrm{COV}\big(\Phi(m_{t+k}),\Phi(m_{s+k})\big).
\end{equation}

Next, we analyze the difference $m_{t+k}-m_{s+k}$. We compute
\begin{equation}\label{eq:mt_difference}
\begin{aligned}
m_{t+k}-m_{s+k}
&=\int_{\mathbb{R}^d}\big(x(u,t+k)-x(u,s+k)\big)\,\mu_0(\mathrm{d}u)\\
&=\int_{\mathbb{R}^d}\big(x(u,t-s)-u\big)\,\mu_{s+k}(\mathrm{d}u).
\end{aligned}
\end{equation}

By Step~1, $\mu_{s+k}-m_{s+k}$ converges in $\gamma_{2n}$ to $\nu_\infty$. Hence,
\begin{equation}\label{eq:mt_limit}
m_{t+k}-m_{s+k}
=
\int_{\mathbb{R}^d}x(u,t-s)\,(\mu_{s+k}-m_{s+k})(\mathrm{d}u)
\xrightarrow[k\to\infty]{}
\int_{\mathbb{R}^d}x(u,t-s)\,\nu_\infty(\mathrm{d}u).
\end{equation}

Consequently, the finite-dimensional distributions of $\{m_{t+k}\}_{t\ge0}$ converge to those of a stationary process whose increments depend only on the time difference $t-s$. Combining this with \eqref{eq:cov_limit_mt}, we conclude that
\begin{equation}\label{eq:final_cov}
\lim_{k\to\infty}
\mathrm{COV}\big((g,h_{t+k}),(g,h_{s+k})\big)
=
\mathrm{COV}_\infty(t-s),
\end{equation}
where
\begin{equation}
\begin{split}
\mathrm{COV}_\infty(t-s)
:=&
\mathrm{COV}\Big(
\Phi\Big(\int_{\mathbb{R}^d}x(u,t-s)\,\nu_\infty(\mathrm{d}u)\Big),
\Phi\Big(\int_{\mathbb{R}^d}x(u,0)\,\nu_\infty(\mathrm{d}u)\Big)
\Big)\\
=&
\mathrm{COV}\Big(
\Phi\Big(\int_{\mathbb{R}^d}x(u,t-s)\,\nu_\infty(\mathrm{d}u)\Big),
\Phi\Big(\int_{\mathbb{R}^d}u\,\nu_\infty(\mathrm{d}u)\Big)
\Big).
\end{split}
\end{equation}

\end{proof}

\subsection{Basin for attraction}
Fix a probability measure \(\nu_0\), and define
\begin{equation}
\Phi(u):=(h(u),\mathrm{H}(\nu_0))_H=\int_{\mathbb R^d}\Gamma(u-v)\,\nu_0(dv).
\end{equation}

Define
\begin{equation}
F(\mu):=\int_{\mathbb R^d}\Phi(u)\,\mu(du)-c,\quad Q(\mu):=\int_{\mathbb R^d}|\nabla\Phi(u)|^2\,\mu(du),\quad D(\mu):=\int_{\mathbb R^d}\Delta\Phi(u)\,\mu(du).
\end{equation}

Let
\begin{equation}
\mathfrak{M}_\Phi
:=
\left\{
\mu\in\mathfrak{M}(\mathbb R^d):
Q(\mu)>0
\right\}.
\end{equation}

For \(\kappa_0>0\), set
\begin{equation}
\kappa(t)=\kappa_0+t.
\end{equation}
On \(\mathcal P_\Phi\), define the feedback drift
\begin{equation}\label{eq:feedback_drift}
a(u,\mu,t)
:=
-
\frac{
\kappa(t)F(\mu)+\frac{\sigma^2}{2}D(\mu)
}{
Q(\mu)
}
\nabla\Phi(u).
\end{equation}
We consider the stochastic flow
\begin{equation}\label{eq:feedback_flow}
dx(u,t)
=
a(x(u,t),\mu_t,t)\,dt
+
\sigma\,dB_t,
\qquad
x(u,0)=u.
\end{equation}

For \(R>0\), define the basin
\begin{equation}
\mathcal B_R
:=
\left\{
\mu\in\mathfrak M_\Phi:
|F(\mu)|\le R
\right\}.
\end{equation}

\begin{theorem}\label{thm:feedback_exact_attraction}
Assume that \(\Phi\in C_b^2(\mathbb R^d)\). Let \((\mu_t)_{t\ge0}\) be a solution of \eqref{eq:feedback_flow} such that
\begin{equation}
\mu_t\in\mathfrak M_\Phi,
\qquad t\ge0.
\end{equation}
Then, for every initial measure \(\mu_0\in\mathcal B_R\), one has as $t\to\infty$
\begin{equation}
F(\mu_t)\stackrel{L^2}\longrightarrow0.
\end{equation}
% in \(L^2\) as \(t\to\infty\). More precisely,
% \begin{equation}\label{eq:L2_decay_feedback}
% \mathbb E|F(\mu_t)|^2
% \le
% e^{-2K(t)}|F(\mu_0)|^2
% +
% \frac{\sigma^2\|\nabla\Phi\|_\infty^2}{2\kappa_0+t},
% \end{equation}
% where
% \begin{equation}
% K(t)=\int_0^t\kappa(s)\,ds
% =
% \kappa_0t+\frac{t^2}{2}.
% \end{equation}
% In particular,
% \begin{equation}
% \lim_{t\to\infty}\mathbb E|F(\mu_t)|^2=0.
% \end{equation}
% Consequently,
% \begin{equation}
% F(\mu_t)\to0
% \end{equation}
% in probability. Moreover, \(F(\mu_t)\to0\) almost surely.

% Equivalently,
% \begin{equation}
% (m(\mu_t),m(\nu_0))_H\longrightarrow c.
% \end{equation}
% Thus the lifted trajectory is attracted to the constraint set \(M\) in the direction determined by \(m(\nu_0)\).
\end{theorem}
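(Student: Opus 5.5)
The plan is to derive a scalar SDE for $F(\mu_t)$ and check that the feedback drift \eqref{eq:feedback_drift} is built to make this SDE linear and strongly contracting. The noise $\sigma\,dB_t$ is common to all particles, and $\Phi\in C_b^2$. So for each label $u$, Itô's formula gives
\begin{equation*}
d\Phi(x(u,t))=\nabla\Phi(x(u,t))\cdot a(x(u,t),\mu_t,t)\,dt+\frac{\sigma^2}{2}\Delta\Phi(x(u,t))\,dt+\sigma\nabla\Phi(x(u,t))\cdot dB_t .
\end{equation*}
Next we integrate in $\mu_0(du)$. The stochastic term is handled by stochastic Fubini, which is justified as in Step~2 of Theorem~\ref{thm:generator} because $\nabla\Phi$ is bounded. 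Then we change variables to $\mu_t$. By the definitions of $Q$ and $D$, the drift becomes
\begin{equation*}
\int\nabla\Phi\cdot a\,d\mu_t+\frac{\sigma^2}{2}D(\mu_t)
=-\frac{\kappa(t)F(\mu_t)+\frac{\sigma^2}{2}D(\mu_t)}{Q(\mu_t)}\,Q(\mu_t)+\frac{\sigma^2}{2}D(\mu_t)=-\kappa(t)F(\mu_t).
\end{equation*}
This uses $\mu_t\in\mathfrak M_\Phi$, so $Q(\mu_t)>0$. We therefore obtain
\begin{equation*}
dF(\mu_t)=-\kappa(t)F(\mu_t)\,dt+\sigma\,\langle\nabla\Phi,\mu_t\rangle\cdot dB_t .
\end{equation*}

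Second, we apply Itô to $F^2$ and take expectations. The martingale part is a true martingale because its integrand is bounded by $\sigma\|\nabla\Phi\|_\infty|F|$, and $|F|\le\|\Phi\|_\infty+|c|$ is bounded. Writing $y(t)=\mathbb E|F(\mu_t)|^2$, this gives
\begin{equation*}
y'(t)=-2\kappa(t)y(t)+\sigma^2\,\mathbb E|\langle\nabla\Phi,\mu_t\rangle|^2\le-2\kappa(t)y(t)+\sigma^2\|\nabla\Phi\|_\infty^2 .
\end{equation*}
With $K(t)=\kappa_0t+t^2/2$, variation of constants yields
\begin{equation*}
y(t)\le e^{-2K(t)}|F(\mu_0)|^2+\sigma^2\|\nabla\Phi\|_\infty^2\int_0^te^{-2(K(t)-K(s))}\,ds .
\end{equation*}
The first term is at most $R^2e^{-2K(t)}\to0$ since $\mu_0\in\mathcal B_R$. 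For the second term we use $K(t)-K(s)\ge\kappa(s)(t-s)$, because $\kappa$ is increasing. A cleaner route is to split the integral at $t/2$. On $[t/2,t]$, $\kappa\ge\kappa_0+t/2$, so that piece is $\le 1/(2\kappa_0+t)$. On $[0,t/2]$, $K(t)-K(s)\ge(t/2)(\kappa_0+t/2)$, so that piece is $\le (t/2)e^{-t(\kappa_0+t/2)}$. Both tend to $0$, giving a bound of order $\sigma^2\|\nabla\Phi\|_\infty^2/(2\kappa_0+t)$. Hence $y(t)\to0$, which is the $L^2$ convergence.

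The main obstacle is the well-posedness and regularity needed to justify the first step rigorously. The drift \eqref{eq:feedback_drift} is not globally Lipschitz in $\mu$, because $Q(\mu)$ can approach $0$, so Theorem~\ref{existence} does not apply directly. I will not try to prove existence. Instead I will use only the standing hypotheses that a solution exists and stays in $\mathfrak M_\Phi$, and work pathwise on that solution.

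The one technical point remaining is the integrability of $t\mapsto\kappa(t)F(\mu_t)/Q(\mu_t)$, which is needed for Itô's formula for each particle. I would handle it by localization. Take the stopping times $\tau_N=\inf\{t:Q(\mu_t)\le1/N\}$, derive the identity for $F(\mu_{t\wedge\tau_N})$, and let $N\to\infty$. Since $\mu_t\in\mathfrak M_\Phi$ and $Q(\mu_t)$ is continuous in $t$, we get $\tau_N\uparrow\infty$. The expectation computation is unaffected by this, because the final SDE for $F$ has bounded coefficients and no $1/Q$ appears in it.
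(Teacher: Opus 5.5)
Your proof is correct and follows essentially the same route as the paper: It\^o's formula for each $\Phi(x(u,t))$, then averaging over $\mu_0$ so that the feedback drift collapses to the linear equation $dF_t=-\kappa(t)F_t\,dt+\sigma G_t\cdot dB_t$, then a variation-of-constants bound on $\mathbb E|F_t|^2$. The differences are minor. You get the moment bound from It\^o's formula for $F_t^2$ rather than from the explicit solution plus It\^o isometry, and both give the same expression. You split the integral at $t/2$, whereas the paper simply uses $K(t)-K(s)=(t-s)\bigl(\kappa_0+\tfrac{t+s}{2}\bigr)\ge(\kappa_0+\tfrac t2)(t-s)$ on all of $[0,t]$. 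Your localization by $\tau_N$ makes explicit an integrability point that the paper leaves implicit.
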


\begin{proof}
By definition,
\begin{equation}
F(\mu_t)
=
\int_{\mathbb R^d}\Phi(x(u,t))\,\mu_0(du)-c.
\end{equation}
Since \(\Phi\in C_b^2(\mathbb R^d)\), It\^o's formula gives
\begin{equation}
d\Phi(x(u,t))
=
\left[
a(x(u,t),\mu_t,t)\cdot\nabla\Phi(x(u,t))
+
\frac{\sigma^2}{2}\Delta\Phi(x(u,t))
\right]dt
+
\sigma\nabla\Phi(x(u,t))\cdot dB_t.
\end{equation}
Integrating with respect to \(\mu_0(du)\), and using
\begin{equation}
\mu_t=\mu_0\circ x(\cdot,t)^{-1},
\end{equation}
we obtain
\begin{equation}
dF(\mu_t)
=
A_t\,dt+\sigma G_t\cdot dB_t,
\end{equation}
where
\begin{equation}
A_t
=
\int_{\mathbb R^d}
\left[
a(y,\mu_t,t)\cdot\nabla\Phi(y)
+
\frac{\sigma^2}{2}\Delta\Phi(y)
\right]\mu_t(dy),
\end{equation}
and
\begin{equation}
G_t
=
\int_{\mathbb R^d}\nabla\Phi(y)\,\mu_t(dy).
\end{equation}

We now compute \(A_t\). By \eqref{eq:feedback_drift},
\begin{equation}
a(y,\mu_t,t)\cdot\nabla\Phi(y)
=
-
\frac{
\kappa(t)F(\mu_t)+\frac{\sigma^2}{2}D(\mu_t)
}{
Q(\mu_t)
}
|\nabla\Phi(y)|^2.
\end{equation}
Hence
\begin{equation}
\int_{\mathbb R^d}
a(y,\mu_t,t)\cdot\nabla\Phi(y)\,\mu_t(dy)
=
-\kappa(t)F(\mu_t)-\frac{\sigma^2}{2}D(\mu_t).
\end{equation}
Since
\begin{equation}
D(\mu_t)
=
\int_{\mathbb R^d}\Delta\Phi(y)\,\mu_t(dy),
\end{equation}
we get
\begin{equation}
A_t
=
-\kappa(t)F(\mu_t).
\end{equation}
Therefore the scalar process \(F_t:=F(\mu_t)\) satisfies
\begin{equation}\label{eq:F_linear_feedback}
dF_t
=
-\kappa(t)F_t\,dt
+
\sigma G_t\cdot dB_t.
\end{equation}

Let
\begin{equation}
K(t)=\int_0^t\kappa(s)\,ds=t\kappa_0+\frac{t^2}{2}.
\end{equation}
Solving the linear stochastic equation \eqref{eq:F_linear_feedback}, we obtain
\begin{equation}
F_t
=
e^{-K(t)}F_0
+
\sigma
\int_0^t
e^{-(K(t)-K(s))}G_s\cdot dB_s.
\end{equation}
Since
\begin{equation}
|G_s|
=
\left|
\int_{\mathbb R^d}\nabla\Phi(y)\,\mu_s(dy)
\right|
\le
\|\nabla\Phi\|_\infty,
\end{equation}
It\^o's isometry implies
\begin{equation}
\mathbb E|F_t|^2
\le
e^{-2K(t)}|F_0|^2
+
\sigma^2\|\nabla\Phi\|_\infty^2
\int_0^t
e^{-2(K(t)-K(s))}\,ds.
\end{equation}
For \(\kappa(t)=\kappa_0+t\), we have
\begin{equation}
K(t)-K(s)
=
\kappa_0(t-s)+\frac{t^2-s^2}{2}.
\end{equation}
Putting \(r=t-s\), we obtain
\begin{equation}
K(t)-K(t-r)
=
\kappa_0r+tr-\frac{r^2}{2}.
\end{equation}
For \(0\le r\le t\),
\begin{equation}
tr-\frac{r^2}{2}\ge \frac{t}{2}r.
\end{equation}
Therefore
\begin{equation}
K(t)-K(t-r)
\ge
\left(\kappa_0+\frac{t}{2}\right)r.
\end{equation}
It follows that
\begin{equation}
\int_0^t
e^{-2(K(t)-K(s))}\,ds
\le
\int_0^t
e^{-2(\kappa_0+\frac{t}{2})(t-s)}\,ds
\le
\frac{1}{2\kappa_0+t}.
\end{equation}
Consequently,
\begin{equation}
\mathbb E|F_t|^2
\le
e^{-2K(t)}|F_0|^2
+
\frac{\sigma^2\|\nabla\Phi\|_\infty^2}{2\kappa_0+t}.
\end{equation}
In particular,
\begin{equation}
\lim_{t\to\infty}\mathbb E|F_t|^2=0.
\end{equation}

% We now prove almost sure convergence. Fix \(p>2\). By the Burkholder--Davis--Gundy inequality applied to the stochastic convolution in \eqref{eq:F_linear_feedback}, there exists a constant \(C_p>0\) such that, for every integer \(n\ge1\),
% \begin{equation}
% \mathbb E\left[
% \sup_{n\le t\le n+1}|F_t|^p
% \right]
% \le
% C_p n^{-p/2}.
% \end{equation}
% Since \(p>2\), the series
% \begin{equation}
% \sum_{n=1}^\infty n^{-p/2}
% \end{equation}
% converges. Hence, by the Borel--Cantelli lemma,
% \begin{equation}
% \sup_{n\le t\le n+1}|F_t|\longrightarrow0
% \end{equation}
% almost surely. Therefore,
% \begin{equation}
% F_t\longrightarrow0
% \end{equation}
% almost surely as \(t\to\infty\).

The proof is complete.
\end{proof}
\section{Kernel lifting of vortex clusters and quadratic entropy}\label{sec:cluster}
In this section we study how the cluster structure of point vortices appears after the kernel lifting.

Let
\begin{equation}
\mu_t^N
=
\sum_{i=1}^N \kappa_i\delta_{x_i(t)}
\end{equation}
be the empirical vorticity measure, where \(x_i(t)\in\mathbb R^2\) is the position of the \(i\)-th vortex and
\begin{equation}
\kappa_i>0,\qquad \sum_{i=1}^N\kappa_i=1.
\end{equation}
The lifted vorticity is
\begin{equation}
M_t
=
H(\mu_t^N)
=
\int_{\mathbb R^2}h(u)\mu_t^N(du)
=
\sum_{i=1}^N\kappa_i h(x_i(t)).
\end{equation}

The point vortex system\cite{Arnold2021Topological} is
\begin{equation}
\dot x_i(t)
=
\sum_{\substack{j=1\\j\ne i}}^N
\kappa_j
\frac{(x_i(t)-x_j(t))^\perp}
{\|x_i(t)-x_j(t)\|^2},
\qquad
1\le i\le N.
\end{equation}
On every interval without collisions, the trajectories \(x_i(t)\) are smooth.

\begin{definition}
We say that the vortex configuration is \((m,\delta,R)\)-clustered on a time interval \([0,T]\) if there exists a partition
\begin{equation}
\{1,\ldots,N\}
=
\bigcup_{\alpha=1}^m I_\alpha,
\qquad
I_\alpha\cap I_\beta=\varnothing
\quad
\text{for }
\alpha\ne\beta,
\end{equation}
such that, for every \(t\in[0,T]\),
\begin{equation}
\|x_i(t)-x_j(t)\|\le \delta,
\qquad
i,j\in I_\alpha,
\end{equation}
and
\begin{equation}
\operatorname{dist}(C_\alpha(t),C_\beta(t))\ge R,
\qquad
\alpha\ne\beta,
\end{equation}
where $0<\delta\ll 1\ll R$ and
\begin{equation}
C_\alpha(t)=\{x_i(t):i\in I_\alpha\}.
\end{equation}
\end{definition}

For each cluster define its total circulation as
\begin{equation}
K_\alpha
=
\sum_{i\in I_\alpha}\kappa_i.
\end{equation}
If \(K_\alpha\ne0\), define the circulation center by
\begin{equation}
X_\alpha(t)
=
\frac{1}{K_\alpha}
\sum_{i\in I_\alpha}\kappa_i x_i(t).
\end{equation}
The lifted contribution of the \(\alpha\)-th cluster is
\begin{equation}
M_\alpha(t)
=
\sum_{i\in I_\alpha}\kappa_i h(x_i(t)).
\end{equation}

\begin{definition}
For a clustered vortex configuration, the leading lifted cluster mode associated with \(I_\alpha\) is defined by
\begin{equation}
\overline M_\alpha(t)
=
K_\alpha h(X_\alpha(t)).
\end{equation}
The corresponding reduced lifted vorticity is
\begin{equation}
\overline M_t
=
\sum_{\alpha=1}^m
K_\alpha h(X_\alpha(t)).
\end{equation}
\end{definition}

\begin{theorem}
Assume that the vortex configuration is \((m,\delta,R)\)-clustered on \([0,T]\), and assume that \(h\in C^2(\mathbb R^2;H)\) with
\begin{equation}
\|D^2h\|_\infty<\infty.
\end{equation}
Then each lifted cluster satisfies
\begin{equation}
\left\|
M_\alpha(t)-K_\alpha h(X_\alpha(t))
\right\|_H
\le
C_h K_\alpha\delta^2,
\qquad
0\le t\le T,
\end{equation}
where \(C_h>0\) depends only on \(\|D^2h\|_\infty\). Consequently,
\begin{equation}
\left\|
M_t-\sum_{\alpha=1}^m K_\alpha h(X_\alpha(t))
\right\|_H
\le
C_h\delta^2.
\end{equation}
Moreover, for the Gaussian kernel, different lifted cluster modes are almost orthogonal:
\begin{equation}
\left|
(K_\alpha h(X_\alpha(t)),K_\beta h(X_\beta(t)))_H
\right|
\le
K_\alpha K_\beta e^{-R^2/2},
\qquad
\alpha\ne\beta.
\end{equation}
\end{theorem}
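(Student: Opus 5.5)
The plan is a second-order Taylor expansion of $h$ around the circulation center $X_\alpha(t)$, followed by summation over clusters, and then a direct computation of the Gaussian inner product. Fix $t\in[0,T]$ and a cluster $I_\alpha$. For each $i\in I_\alpha$, Taylor's formula with integral remainder in the Hilbert space $H$ gives
\begin{equation*}
h(x_i(t))=h(X_\alpha(t))+Dh(X_\alpha(t))\bigl[x_i(t)-X_\alpha(t)\bigr]+R_i(t),
\qquad
\|R_i(t)\|_H\le\tfrac12\|D^2h\|_\infty\|x_i(t)-X_\alpha(t)\|^2 .
\end{equation*}
Multiply by $\kappa_i$ and sum over $i\in I_\alpha$. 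The zeroth-order terms give $K_\alpha h(X_\alpha(t))$. The first-order terms vanish exactly, because $\sum_{i\in I_\alpha}\kappa_i(x_i-X_\alpha)=0$ by the definition of $X_\alpha$ (this uses $K_\alpha\neq0$; since all $\kappa_i>0$, this holds automatically).

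For the remainder, $X_\alpha(t)$ is a convex combination of the points of $C_\alpha(t)$, so $\|x_i(t)-X_\alpha(t)\|\le\max_{j\in I_\alpha}\|x_i-x_j\|\le\delta$. Hence
\begin{equation*}
\Bigl\|\sum_{i\in I_\alpha}\kappa_iR_i\Bigr\|_H\le\tfrac12\|D^2h\|_\infty K_\alpha\delta^2 ,
\end{equation*}
which is the first claim with $C_h=\tfrac12\|D^2h\|_\infty$. The global bound follows from $M_t=\sum_\alpha M_\alpha(t)$, the triangle inequality, and $\sum_\alpha K_\alpha=\sum_i\kappa_i=1$.

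For the near-orthogonality, use $(h(u),h(v))_H=\Gamma(u-v)$, which gives
\begin{equation*}
(K_\alpha h(X_\alpha),K_\beta h(X_\beta))_H=K_\alpha K_\beta\,\Gamma(X_\alpha-X_\beta).
\end{equation*}
It remains to bound $\|X_\alpha-X_\beta\|$ from below. The centers lie in the convex hulls of $C_\alpha$ and $C_\beta$. Each point of such a hull is within distance $\delta$ of the corresponding cluster, since the cluster diameter is at most $\delta$. Therefore $\|X_\alpha-X_\beta\|\ge\operatorname{dist}(C_\alpha,C_\beta)\ge R$, up to a correction of order $\delta$ that I would need to handle.

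This distance estimate is the main obstacle, together with matching the exponent. With the Gaussian $\Gamma(z)=e^{-\|z\|^2}$ we get $\Gamma(X_\alpha-X_\beta)\le e^{-\|X_\alpha-X_\beta\|^2}$. It suffices to have $\|X_\alpha-X_\beta\|^2\ge R^2/2$, i.e.\ $\|X_\alpha-X_\beta\|\ge R/\sqrt2$. I would first try to show the centers are at distance at least $R$ directly. The natural route is to take $X_\alpha$ close to a point of $C_\alpha$ and apply the triangle inequality, which gives $\|X_\alpha-X_\beta\|\ge R-2\delta$. Under the standing regime $\delta\ll1\ll R$ one has $R-2\delta\ge R/\sqrt2$, so the stated bound $e^{-R^2/2}$ follows with room to spare. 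The slack in the exponent is what absorbs the $\delta$-loss, and this is how I would reconcile the constant.
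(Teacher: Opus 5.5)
Your proof is correct and follows the paper's argument. It uses the Taylor expansion of $h$ at the circulation center, exact cancellation of the first-order term because $\sum_{i\in I_\alpha}\kappa_i(x_i-X_\alpha)=0$, summation using $\sum_\alpha K_\alpha=1$, and the identity $(h(u),h(v))_H=\Gamma(u-v)$ for the cross term.

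On the last step you are more careful than the paper. The paper writes $(h(X_\alpha),h(X_\beta))_H=\exp(-\tfrac12\|X_\alpha-X_\beta\|^2)$ with $\|X_\alpha-X_\beta\|\ge R+O(\delta)$ and then drops the correction, although it can be negative because the centers may be strictly closer than $R$. You instead use the kernel $\Gamma(z)=e^{-\|z\|^2}$ declared in Section~\ref{sec:markov} and absorb the loss explicitly through $R-2\delta\ge R/\sqrt2$, which holds once $\delta\le(1-1/\sqrt2)R/2$.
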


\begin{proof}
Since \(h\in C^2(\mathbb R^2;H)\), Taylor expansion at \(X_\alpha(t)\) gives
\begin{equation}
h(x_i(t))
=
h(X_\alpha(t))
+
Dh(X_\alpha(t))(x_i(t)-X_\alpha(t))
+
R_i^\alpha(t),
\end{equation}
where
\begin{equation}
\|R_i^\alpha(t)\|_H
\le
\frac12
\|D^2h\|_\infty
\|x_i(t)-X_\alpha(t)\|^2.
\end{equation}
Multiplying by \(\kappa_i\) and summing over \(i\in I_\alpha\), we get
\begin{equation}
M_\alpha(t)
=
K_\alpha h(X_\alpha(t))
+
Dh(X_\alpha(t))
\sum_{i\in I_\alpha}\kappa_i(x_i(t)-X_\alpha(t))
+
\sum_{i\in I_\alpha}\kappa_iR_i^\alpha(t).
\end{equation}
By the definition of \(X_\alpha(t)\),
\begin{equation}
\sum_{i\in I_\alpha}\kappa_i(x_i(t)-X_\alpha(t))
=
0.
\end{equation}
Hence
\begin{equation}
M_\alpha(t)-K_\alpha h(X_\alpha(t))
=
\sum_{i\in I_\alpha}\kappa_iR_i^\alpha(t).
\end{equation}
Using the cluster diameter assumption,
\begin{equation}
\|x_i(t)-X_\alpha(t)\|\le C\delta,
\end{equation}
we obtain
\begin{equation}
\left\|
M_\alpha(t)-K_\alpha h(X_\alpha(t))
\right\|_H
\le
C_h K_\alpha\delta^2.
\end{equation}
Summing over \(\alpha\) and using
\begin{equation}
\sum_{\alpha=1}^mK_\alpha=1
\end{equation}
gives
\begin{equation}
\left\|
M_t-\sum_{\alpha=1}^mK_\alpha h(X_\alpha(t))
\right\|_H
\le
C_h\delta^2.
\end{equation}
Finally, for \(\alpha\ne\beta\), the separation condition implies
\begin{equation}
\|X_\alpha(t)-X_\beta(t)\|\ge R+O(\delta).
\end{equation}
For the Gaussian kernel,
\begin{equation}
(h(X_\alpha(t)),h(X_\beta(t)))_H
=
\exp\left(-\frac12\|X_\alpha(t)-X_\beta(t)\|^2\right),
\end{equation}
and therefore
\begin{equation}
\left|
(K_\alpha h(X_\alpha(t)),K_\beta h(X_\beta(t)))_H
\right|
\le
K_\alpha K_\beta e^{-R^2/2}.
\end{equation}
This proves the theorem.
\end{proof}

% \begin{remark}
% The theorem shows that the multiscale vortex behavior is visible in \(H\). Vortices inside the same cluster generate almost parallel vectors, while well-separated clusters generate almost orthogonal collective modes. Thus the lifted empirical vorticity has the reduced representation
% \begin{equation}
% M_t
% =
% \sum_{\alpha=1}^m K_\alpha h(X_\alpha(t))
% +
% O(\delta^2).
% \end{equation}
% \end{remark}

\subsection{Quadratic entropy of lifted vortex dynamics}
We now turn to quadratic entropy\cite{Dorogovtsev2010}.  Let
\begin{equation}
A_t^N
=
\{h(x_i(t)):1\le i\le N\}\subset H
\end{equation}
be the lifted vortex configuration.
\begin{definition}
The \emph{quadratic entropy} of a set \( A \subset\mathcal{H} \) is
\begin{equation}
\mathcal{H}_2(A) := \sup \sum_n \|\pi u_n\|^2,
\end{equation}
where the supremum is taken over all finite and infinite sequences of elements of \( A \).
\end{definition}

% Indeed, due to the Gaussian lifting is Lipschitz, if \(N_H(A_t^N,\varepsilon)\) is the covering number in \(H\), then
% \begin{equation}
% N_H(A_t^N,\varepsilon)
% \le
% N_{\mathbb R^2}(\{x_1(t),\ldots,x_N(t)\},\varepsilon).
% \end{equation}
% If the vortices remain in a bounded domain of \(\mathbb R^2\), this gives
% \begin{equation}
% N_H(A_t^N,\varepsilon)
% \le
% C\varepsilon^{-2}.
% \end{equation}
% Hence
% \begin{equation}
% \overline{\dim}_{\mathrm{box}} A_t^N:=
% \limsup_{\varepsilon\downarrow 0}
% \frac{\log N(A_t^N,\varepsilon)}{\log(1/\varepsilon)}\le 2.
% \end{equation}

\begin{theorem}
Assume that the vortex configuration is \((m,\delta,R)\)-clustered on \([0,T]\). For the Gaussian kernel, the lifted vortex configuration satisfies 
\begin{equation}
H_2(A_t^N)
\le
m
+
C N\delta^2
+
C m^2e^{-R^2/2}.
\end{equation}
\end{theorem}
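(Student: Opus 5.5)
The plan is to read the quadratic entropy in the Gram--Schmidt sense of \cite{Dorogovtsev2010}. For a sequence $u_1,u_2,\ldots$ of elements of $A_t^N$, $\pi u_n$ denotes the projection of $u_n$ onto the orthogonal complement of $\operatorname{span}\{u_1,\ldots,u_{n-1}\}$, and $\mathcal H_2$ is the supremum of $\sum_n\|\pi u_n\|^2$. Because $A_t^N$ has at most $N$ distinct points, and a repeated point has $\pi u_n=0$, it suffices to bound $\sum_n\|\pi u_n\|^2$ for every ordering of distinct points of $A_t^N$. The bound must be uniform in the ordering. The two geometric inputs are the ones just established. First, points of one cluster lift to almost parallel (in fact almost equal) vectors. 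Second, lifts of different clusters are almost orthogonal for the Gaussian kernel.

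Step 1 (Lipschitz bound inside a cluster). For the Gaussian kernel, $\|h(x)\|_H^2=\Gamma(0)=1$ and
\[
\|h(x)-h(y)\|_H^2=2\bigl(1-e^{-\|x-y\|^2/2}\bigr)\le\|x-y\|^2 .
\]
Hence any two lifted vortices in the same cluster $I_\alpha$ satisfy $\|h(x_i(t))-h(x_j(t))\|_H\le\delta$.

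Step 2 (splitting an arbitrary ordering). Fix an ordering $u_1,\ldots,u_p$. Call $u_n=h(x_i(t))$ a \emph{leader} if it is the first element in the sequence coming from its cluster $I_\alpha$; otherwise call it a \emph{follower}. There are at most $m$ leaders.

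\begin{itemize}
\item For a follower, some earlier $u_k=h(x_j(t))$ lies in the same cluster. Since $\pi$ is a projection that kills $u_k$,
\[
\|\pi u_n\|^2=\|\pi(u_n-u_k)\|^2\le\|u_n-u_k\|^2\le\delta^2 .
\]
The total contribution of followers is therefore at most $N\delta^2$.
\item For a leader, the trivial bound $\|\pi u_n\|^2\le\|u_n\|^2=1$ gives a total of at most $m$.
\end{itemize}

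Adding the two parts yields $\mathcal H_2(A_t^N)\le m+N\delta^2$, which is already stronger than the claim.

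Step 3 (tracking the collective modes). Since the statement carries a cross-cluster term, I would also give the version that controls the leaders through the Gram matrix of the reduced modes $h(X_\alpha(t))$. By the previous theorem its off-diagonal entries are bounded by $e^{-R^2/2}$, so the Gram matrix is $I+E$ with $\|E\|\le m^2e^{-R^2/2}$ in Hilbert--Schmidt norm. The sum of squared Gram--Schmidt residuals of the leaders is bounded by the trace of this Gram matrix plus the perturbation. Replacing leaders by cluster centres costs $O(\delta)$ per leader, and this is absorbed into $CN\delta^2$ after adjusting constants, using Step 1 together with $\|x_i-X_\alpha\|\le\delta$. Together these give $m+CN\delta^2+Cm^2e^{-R^2/2}$.

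The main obstacle is the uniformity of the supremum over orderings and over infinite sequences. I handle this with the leader/follower argument, which works for any order, and by observing that only finitely many distinct vectors occur. The one delicate point is the inequality $\|\pi(u_n-u_k)\|\le\|u_n-u_k\|$, which holds because $\pi$ is an orthogonal projection. It is valid for every later index precisely because $u_k$ precedes $u_n$ in the sequence.
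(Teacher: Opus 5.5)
Your Step 2 is a complete and correct proof. It takes a different route from the paper.

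\textbf{The paper's route.} The paper first bounds every residual by $\min_{j\neq i}\|h(x_i(t))-h(x_j(t))\|_H^2$, without regard to the order of the sequence. It then passes to the reduced modes $h(X_\alpha(t))$ and uses their near-orthogonality to get $\mathcal H_2(B_t^m)\le m+Cm^2e^{-R^2/2}$. Finally it asserts, without argument, that replacing each particle by its cluster centre changes the entropy by at most $CN\delta^2$.

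\textbf{What your route does differently.} Your leader/follower split makes the ordering explicit. A residual can be bounded by a distance only when an element of the same cluster comes earlier in the sequence. The first element from each cluster is exactly where the term $m$ comes from. The paper's first inequality overlooks this: under the Gram--Schmidt reading, the first vector of any sequence has $\|\pi u_1\|=1$, while $\sum_i\min_{j\neq i}\|x_i-x_j\|^2$ can be of order $N\delta^2$.

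\textbf{What each approach buys.} Your argument gives the sharper bound $\mathcal H_2(A_t^N)\le m+N\delta^2$. It uses neither the separation $R$ nor the circulation centres $X_\alpha(t)$. It also replaces the paper's unproved perturbation inequality $\mathcal H_2(A_t^N)\le\mathcal H_2(B_t^m)+CN\delta^2$ with a rigorous count. The near-orthogonality the paper relies on plays no role in an upper bound; even $\mathcal H_2(B_t^m)\le m$ holds trivially. It would matter only for a matching lower bound showing that $m$ is the correct order.

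\textbf{Drop Step 3.} The leaders are at most $m$ unit vectors, so their total is at most $m$ with no Gram-matrix perturbation needed. Moreover, the claim that replacing leaders by centres at cost $O(\delta)$ each is absorbed into $CN\delta^2$ is not valid as stated. An $O(\delta)$ perturbation of a unit vector changes its squared residual by $O(\delta)$, not $O(\delta^2)$, and $m\delta$ is not controlled by $N\delta^2$ in general.
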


\begin{proof}
For arbitrary \(i\), the length of the orthogonal projection of \(h(x_i(t))\) is bounded by its distance to any other lifted vector. Hence
\begin{equation}
\|\pi_{h(x_i(t))}\|_H^2
\le
\min_{j\ne i}
\|h(x_i(t))-h(x_j(t))\|_H^2.
\end{equation}
Summing over \(i\) and taking the supremum over all finite sequences gives
\begin{equation}
H_2(A_t^N)
\le
\sum_{i=1}^N
\min_{j\ne i}
\|h(x_i(t))-h(x_j(t))\|_H^2.
\end{equation}
Using the Gaussian kernel estimate
\begin{equation}
\|h(x_i(t))-h(x_j(t))\|_H^2
\le
\|x_i(t)-x_j(t)\|^2,
\end{equation}
we obtain
\begin{equation}
H_2(A_t^N)
\le
\sum_{i=1}^N
\min_{j\ne i}
\|x_i(t)-x_j(t)\|^2.
\end{equation}

Now assume the cluster decomposition. Let
\begin{equation}
B_t^m
=
\{h(X_\alpha(t)):1\le\alpha\le m\}.
\end{equation}
The vectors \(h(X_\alpha(t))\) are normalized and almost orthogonal:
\begin{equation}
\|h(X_\alpha(t))\|_H=1,
\end{equation}
and
\begin{equation}
|(h(X_\alpha(t)),h(X_\beta(t)))_H|
\le
e^{-R^2/2},
\qquad
\alpha\ne\beta.
\end{equation}
Therefore
\begin{equation}
H_2(B_t^m)
\le
m+C m^2e^{-R^2/2}.
\end{equation}

For \(i\in I_\alpha\), the Lipschitz property of \(h\) gives
\begin{equation}
\|h(x_i(t))-h(X_\alpha(t))\|_H
\le
\|x_i(t)-X_\alpha(t)\|
\le
C\delta.
\end{equation}
Thus replacing every lifted particle \(h(x_i(t))\) by the corresponding lifted center \(h(X_\alpha(t))\) creates a total squared error bounded by \(C N\delta^2\). Hence
\begin{equation}
H_2(A_t^N)
\le
H_2(B_t^m)
+
C N\delta^2.
\end{equation}
Combining the two estimates yields
\begin{equation}
H_2(A_t^N)
\le
m
+
C N\delta^2
+
C m^2e^{-R^2/2}.
\end{equation}
This completes the proof.
\end{proof}

\bibliographystyle{unsrt}
\bibliography{ref}
\end{document}